\documentclass{amsart}
\usepackage{amsrefs,amssymb}
\usepackage{mathtools}          
\usepackage[inline]{enumitem}   
\usepackage{scalerel}           

\usepackage[pdftex, breaklinks, colorlinks]{hyperref}

\DeclarePairedDelimiter\abs{\lvert}{\rvert}
\DeclarePairedDelimiter\norm{\lVert}{\rVert}
\newcommand\sm[1]{{\scaleobj{0.6}{#1}}}
\newcommand\Sm[1]{{\scaleobj{0.8}{#1}}}
\newcommand\key[1]{\emph{#1}}
\newcommand\reals{\mathbb{R}}
\newcommand\posreals{\mathbb{R^+}}
\newcommand\posrationals{\mathbb{Q^+}}
\newcommand\sphere{\mathbb{S}}
\newcommand\nats{\mathbb{N}}
\newcommand{\renorm}{\mathcal{R}}
\newcommand{\deriv}{D}
\newcommand{\nonlin}{N}
\newcommand{\schwarz}{S}
\newcommand\compose{\bigcirc}
\newcommand\eps{\varepsilon}
\newcommand\diff{\mathrm{Diff}}
\newcommand\sdiff{\mathrm{Diff}^\mathrm{S}}
\newcommand\lorenz{\mathcal{L}}
\newcommand\inv[2][1]{#2^{-#1}}
\newcommand\Ck[1]{C^{#1}}
\newcommand\ubdist{\tfrac12\log\alpha}
\DeclarePairedDelimiterX\zoom[2]{[}{]}{#1|#2}
\newcommand\Dl{\mathcal{D}}
\newcommand\D{\Dl(b,\Delta,\gamma,\delta)}
\newcommand\sref[1]{\S\ref{#1}}
\newcommand\ldiff[1]{\ell^1(\diff^3; #1)}
\newcommand\lpure[1]{\ell^1(\reals; #1)}
\newcommand\Wu{\mathcal{W}_b^{\text{u}}}
\newcommand\Wuu{\mathcal{W}_b^{\text{uu}}}
\newcommand\tc{\mathcal{T}}
\newcommand\tu{\mathcal{U}}
\newcommand\wu{\mathcal{W}^\text{u}}
\newcommand\wuu{\mathcal{W}^\text{uu}}
\newcommand\family{\mathcal{F}}
\newcommand\crit{c}
\newcommand\relc{\tilde c}
\newcommand\relv{\tilde v}
\DeclareMathOperator{\dist}{dist}
\DeclareMathOperator{\HD}{HD}
\DeclareMathOperator{\id}{id}
\DeclareMathOperator{\Int}{Int}
\DeclareMathOperator{\depth}{depth}
\DeclareMathOperator{\vol}{vol}

\DeclareMathOperator{\graph}{graph}

\newtheorem{theorem}{Theorem}
\newtheorem{proposition}[theorem]{Proposition}
\newtheorem{lemma}[theorem]{Lemma}
\newtheorem*{coexistence-thm}{Coexistence Theorem}
\newtheorem*{expansion-lemma}{Expansion Lemma}
\newtheorem*{flipping-lemma}{Flipping Lemma}
\newtheorem*{homotopy-lemma}{Homotopy Lemma}
\newtheorem*{degeneration-thm}{Degeneration Theorem}
\newtheorem*{a-priori-bounds}{A Priori Bounds}
\newtheorem*{unstable-mfd-thm}{Dimensional Discrepancy Theorem}
\newtheorem*{T-conjecture}{Conjecture}

\theoremstyle{remark}
\newtheorem*{remark}{Remark}

\begin{document}

\title{Instability of renormalization}
\author{Marco Martens}
\author{Bj\"orn Winckler}
\address{Institute for Mathematical Sciences, Stony Brook University, Stony
    Brook NY 11794-3660, USA}
\email{marco@math.stonybrook.edu}
\email{bjorn.winckler@gmail.com}
\date{\today}
\thanks{Research supported by NSF grant 1600554 and the Knut\&Alice Wallenberg
    Foundation}

\begin{abstract}
    In the theory of renormalization for classical dynamical systems, e.g.\
    unimodal maps and critical circle maps, topological conjugacy classes are
    stable manifolds of renormalization.
    Physically more realistic systems on the other hand may exhibit instability
    of renormalization within a topological class.
    This instability gives rise to new phenomena and opens up directions of
    inquiry that go beyond the classical theory.
    In phase space it leads to the coexistence phenomenon, i.e.\ there are
    systems whose attractor has bounded geometry but which are topologically
    conjugate to systems whose attractor has degenerate geometry;
    in parameter space it causes dimensional discrepancy, i.e.\ 
    a topologically full family has too few dimensions to realize all possible
    geometric behavior.
\end{abstract}

\maketitle

\section{Introduction}
\label{introduction}

To understand the behavior of dynamical systems it is natural to consider:
\begin{enumerate*}[label=(\alph*)]
    \item the geometry of attractors, \label{aspect2}
    \item bifurcation patterns of families, \label{aspect3}
    \item topological and combinatorial aspects, as well as \label{aspect1}
    \item measure theoretical aspects. \label{aspect4}
\end{enumerate*}
Renormalization is a tool which was originally introduced into dynamics
\cites{CT78,F78} to
analyze \ref{aspect2} and~\ref{aspect3} but it turned out to connect all four
of the above aspects.
The renormalization of a system is a new system which describes the dynamics
on a smaller scale; it is a microscope on attractors.
Intrinsic to this scheme is a characterization of~\ref{aspect1} which provides
a natural setting for understanding \ref{aspect4} and culminating in a
description of topological conjugacy classes as invariant manifolds of
renormalization.

In classical systems, e.g.\ unimodal maps and critical circle maps,\footnote{%
    With some restriction on the topology; for simplicity, assume stationary
    combinatorics.
    }
instability of renormalization is exclusively
associated with changes in topology: a topological conjugacy class is the stable
manifold of a hyperbolic fixed point of renormalization and its unstable
manifold is a topologically full family (see e.g.\ \cites{AL11,FMP06,Y02} and
references therein).
As a consequence there is rigidity and parameter universality.
Rigidity is the phenomenon that two topologically conjugate systems are
automatically smoothly conjugate on their attractors, i.e.\ topology determines
geometry.
Intuitively, repeated renormalization is like increasing the magnification
factor whilst looking at an attractor under a microscope;
successive renormalizations converge to the fixed point, so asymptotically the
attractor looks like the attractor of the fixed point.
Parameter universality is the phenomenon that the metric aspects of the
bifurcation patterns of a family is determined by the bifurcation patterns of
the unstable manifold.
It is a consequence of the fact that a topologically full family has the same
dimension as the unstable manifold; hence it meets the stable manifold and
successive renormalizations of the family accumulate on the
unstable manifold.

A question of great interest is what happens to these phenomena as more
physically relevant systems are considered?
Coullet and Tresser \cite{CT78} conjectured that universality would occur in
real world systems and this has since been confirmed in many contexts (e.g.\
\cites{LM80,L81}).
However, the main message here is that in more realistic systems there may be
instability of renormalization inside a topological class, leading to new
phenomena which are much more intricate than for the classical systems.
The theory of renormalization is at the beginning of a new chapter which goes
beyond the classical theory.
The first indication of this was observed in H\'enon dynamics where the
rigidity paradigm turned out to only hold in a probabilistic sense: the
conjugacy between two attractors of period-doubling type is smooth except on
a set of measure zero where it is at most H\"older \cite{CLM05}.

In this paper we show that within the context of Lorenz dynamics there is
instability of renormalization inside most topological classes, even for
stationary combinatorics.
Lorenz maps are one-dimensional systems that are closely related to unimodal
maps, but they are physically more relevant in the sense that they describe the
dynamics of certain higher-dimensional flows (see e.g.\ \cites{ACT81,GW79,V00}
and references therein).
Instability of renormalization has two distinct consequences for these systems:
\begin{enumerate*}[label=(\roman*)]
    \item degeneration of successive renormalizations within the topological
        class of a renormalization fixed point,
        \label{instability1}
    \item unstable manifolds of renormalization fixed points have strictly
        larger dimension ($\geq3$) than topologically full families
        ($2$).
        \label{instability2}
\end{enumerate*}

Item \ref{instability1} leads to what we call the \key{coexistence phenomenon},
which is when a topological class contains both systems whose attractor has
bounded geometry and systems whose attractor has degenerate geometry.
In particular, there is no rigidity in the traditional sense that the
topological class is a rigidity class; instead the topological class is
partitioned by rigidity classes.
This has consequences on the bifurcation patterns of a family:
in the classical setting they are given by intersections with topological
classes, but here they are given by intersections with rigidity classes.
Because of \ref{instability2}, which we call \key{dimensional discrepancy},
a topologically full family (of dim $2$) is too small to realize all geometric
aspects of the possible bifurcation patterns.
For example, such a family will not meet the stable manifold generically and
hence the rigidity class of the renormalization fixed point will not be
represented;
neither will successive renormalizations of the family accumulate on the
unstable manifold of the fixed point, as in the classical setting.
In particular, there is no parameter universality in the traditional sense, but
instead we see much more intricate behavior (see the conjecture
in~\sref{results}).

\subsection{Results}
\label{results}

Fix a topological class $\tc$ of infinitely $(a,b)$--renormalizable Lorenz
maps with $a$ and~$b$ sufficiently large (see \sref{preliminaries} for
definitions).
We would like to highlight two results, illustrated in Figure~\ref{topclass},
which are the consequences of the instability of renormalization discussed in
the introduction.

\begin{figure}
    {\footnotesize
    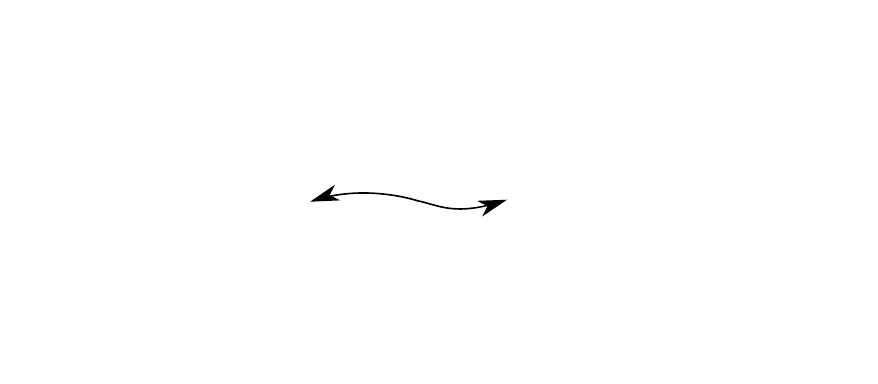
    \caption{%
        Illustration of the dynamics on $\tc$.
        \label{topclass}
        }
    }
\end{figure}

\begin{coexistence-thm}
    There exist a nonempty open set $\tu \subset \tc$ and a
    renormalization fixed point $f^\star \in \tc\setminus\tu$
    such that the successive renormalizations of $f \in \tu$ degenerate and the
    successive renormalizations of $f \in \tc\setminus \tu$ have convergent
    subsequences.
\end{coexistence-thm}

\begin{unstable-mfd-thm}
    The fixed point $f^\star$ has an unstable manifold $\wu$.
    The dimension of $\wu$ is at least three and
    every neighborhood of $f^\star$ in $\wu$ intersects
    $\tc\setminus f^\star$.
    Furthermore, $f^\star$ has a two-dimensional strong unstable manifold
    $\wuu \subset \wu$.
    It is a topologically full family and $\wuu \cap \tc = f^\star$.
\end{unstable-mfd-thm}

The Coexistence Theorem follows from the Degeneration Theorem of
\sref{applications} (which defines ``degeneration'') and the existence
of fixed points, see \sref{existence-of-fixed-points}.
The Dimensional Discrepancy Theorem follows from the results of
\sref{unstable-manifolds}.
That $\wuu$ meets $\tc$ in a unique point is related to the question of
monotonicity of entropy; in fact, we prove this intersection property for a
class of families which contains the family $\wuu$.
We do not prove anything related to the stable manifold, but we know that
it has finite codimension (see the remark after Theorem~\ref{R-diff}).

We make the following conjecture regarding the structure of~$\tc$:
\begin{T-conjecture}
    $\tc$ is a codim--$2$ manifold and $f^\star$ is hyperbolic.
    For $a$ and $b$ sufficiently large:
    \begin{enumerate*}
        \item $\dim \wu = 3$,
        \item $\tc \setminus \tu$ is the stable manifold of $f^\star$,
        \item $\tu$ is foliated by codim--$1$ rigidity classes and
            $\tc\setminus\tu$ is the rigidity class of $f^\star$,
        \item a generic $2$--dim family $\mathcal F$ intersects $\tu$ in
            a unique rigidity class and the domains of $n$ times
            renormalizability inside $\mathcal F$ shrink super-exponentially in
            a universal way; if $\mathcal F$ intersects
            $\tu\setminus\tc$, then generically these domains shrink
            exponentially in a universal way.
    \end{enumerate*}
\end{T-conjecture}
By the conjecture a generic $3$--dim family intersects $\tc$ in a curve and
points on this curve corresponds to different rigidity classes.
In other words, there is a kind of parameter universality for such families.
The conjecture also shows that there are two kinds of parameter universality
for $2$--dim families, depending on whether they hit $\tu$ (the generic case)
or not.
The above is in agreement with the Rigidity Conjecture \cite{MPW17} but it is
slightly stronger as we do not expect to see probabilistic rigidity classes as
in the H\'enon case.
It is important to note that the condition on $a$ and $b$ both being large is
essential: e.g.\ conjecturally there is no instability within the topological
classes for the combinatorics of \cite{W10} ($a=1$, $b=2$, critical exponent
$\alpha=2$) and \cite{MW14} ($a$ small, $b$ large).
Moreover, we conjecture that there are topological classes (e.g.\ $a=2$,
$b=8$, critical exponent $\alpha=2$) for which the fixed point has $2$--dim
unstable behavior, but where there still is instability inside the class due to
the presence of a period--$2$ point of renormalization with $3$--dim unstable
behavior.

In the process of proving the above theorems we are able to get precise control
over how the critical point moves under renormalization as well as to get
bounds on the distortion, see \sref{technical-lemmas}.
In \sref{applications} we use these results to derive many important dynamical
properties for infinitely $(a,b)$--renormalizable Lorenz maps.
In particular we show that there are no wandering intervals (implying that
two such maps are topologically conjugate), see
Theorem~\ref{dynamical-properties}.
Under what conditions there are no wandering intervals for Lorenz maps in
general is an important question which is still wide open.
Furthermore, we prove that such maps have a measure zero minimal Cantor
attractor and describe the invariant measures on the attractor as well as its
Hausdorff dimension.

As a closing remark, note that our techniques are based on real analytical
tools and work for arbitrary real exponents $\alpha > 1$.
We generally work in the $\Ck3$ category which traditionally presents
significant difficulties as the classical renormalization operator is not
differentiable in this class \cite{FMP06}.
This issue is avoided in \sref{the-internal-structure-of-renormalization} by
defining renormalization over internal structures instead of over
diffeomorphisms \cite{M98}.
In particular, our renormalization operator is differentiable in this category,
see Theorem~\ref{R-diff}.
We construct unstable manifolds in the space of internal structures and by
composing we obtain results for the actual system,
see~\sref{proofs-of-the-unstable-manifold-theorems}.

\section{Preliminaries}
\label{preliminaries}

In this section we define the space of Lorenz maps as well as the
renormalization operator acting on this space.
For more details, see \cite{MW14}.

\subsection{The space of Lorenz maps}

The \key{standard family} $(u,v,c) \mapsto q(x)$ is defined by
$q|_{[0,c)} = q_-$ and $q|_{(c,1]} = q_+$, where $u,v \in [0,1]$, $c \in (0,1)$
and
\begin{equation} \label{q}
    \left\{\begin{aligned}
        q_-(x) &= u \Big( 1 - \abs[\Big]{\frac{c-x}{c}}^\alpha \Big),
        \\
        q_+(x) &= 1 + v \Big( -1 + \abs[\Big]{\frac{x-c}{1-c}}^\alpha \Big).
    \end{aligned} \right.
    \qquad\qquad\qquad
    \raisebox{-17mm}{\small 
\begingroup%
  \makeatletter%
  \providecommand\color[2][]{%
    \errmessage{(Inkscape) Color is used for the text in Inkscape, but the package 'color.sty' is not loaded}%
    \renewcommand\color[2][]{}%
  }%
  \providecommand\transparent[1]{%
    \errmessage{(Inkscape) Transparency is used (non-zero) for the text in Inkscape, but the package 'transparent.sty' is not loaded}%
    \renewcommand\transparent[1]{}%
  }%
  \providecommand\rotatebox[2]{#2}%
  \ifx\svgwidth\undefined%
    \setlength{\unitlength}{87.27787348bp}%
    \ifx\svgscale\undefined%
      \relax%
    \else%
      \setlength{\unitlength}{\unitlength * \real{\svgscale}}%
    \fi%
  \else%
    \setlength{\unitlength}{\svgwidth}%
  \fi%
  \global\let\svgwidth\undefined%
  \global\let\svgscale\undefined%
  \makeatother%
  \begin{picture}(1,1.04663679)%
    \put(10.62201219,-8.17679408){\color[rgb]{0,0,0}\makebox(0,0)[lt]{\begin{minipage}{0.649568\unitlength}\centering \end{minipage}}}%
    \put(0,0){\includegraphics[width=\unitlength,page=1]{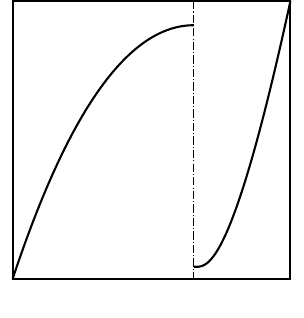}}%
    \put(0.37305236,0.57919983){\color[rgb]{0,0,0}\makebox(0,0)[b]{\smash{$q_-$}}}%
    \put(0.75364597,0.57806148){\color[rgb]{0,0,0}\makebox(0,0)[b]{\smash{$q_+$}}}%
    \put(0.63949631,0.00702871){\color[rgb]{0,0,0}\makebox(0,0)[b]{\smash{$c$}}}%
    \put(0.04170526,0.00702871){\color[rgb]{0,0,0}\makebox(0,0)[b]{\smash{$0$}}}%
    \put(0.95831809,0.00702871){\color[rgb]{0,0,0}\makebox(0,0)[b]{\smash{$1$}}}%
    \put(0.47938249,0.16546152){\color[rgb]{0,0,0}\makebox(0,0)[b]{\smash{$1-v$}}}%
    \put(0.72147892,0.94157465){\color[rgb]{0,0,0}\makebox(0,0)[b]{\smash{$u$}}}%
    \put(0,0){\includegraphics[width=\unitlength,page=2]{lorenzmap.pdf}}%
  \end{picture}%
\endgroup%
}
\end{equation}
Here $\alpha \in \reals$ the \key{critical exponent}, and $c$ the
\key{critical point};
$\alpha > 1$ is fixed throughout.

Let $\diff^2$ denote the set of orientation-preserving $\Ck2$--diffeomorphisms
on~$[0,1]$;
it is a Banach space with norm $\norm{\phi} = \sup\abs{\nonlin\phi}$ and linear
structure
\begin{equation*}
    t_1 \phi_1 \oplus t_2\phi_2
    =
    \inv\nonlin(t_1\nonlin\phi_1 + t_2\nonlin\phi_2),
    \qquad \phi_i \in \diff^2, \; t_i \in \reals.
\end{equation*}
The bijection $\nonlin:\diff^2\to\Ck0$;
$\phi \mapsto \deriv\log\deriv\phi$,
is called the \key{nonlinearity operator}.
Let $\sdiff \subset \diff^2$ denote the convex subset of
$\Ck3$--diffeomorphisms with non-positive Schwarzian derivative,
$\schwarz\phi = \deriv\nonlin\phi - (\nonlin\phi)^2/2 \leq 0$.

Let $f$ be a map with two increasing branches $f_\pm$ of the form
$f_- = \phi\circ q_-$ and $f_+ = \psi\circ q_+$, with $\phi,\psi\in\sdiff$.
Note that $\schwarz f < 0$ and that $f$ is undefined at the critical point
$c$, but the \key{critical values} $f_-(c)$ and $f_+(c)$ are well-defined.
We call $f$ a \key{Lorenz map} iff
$x < f_-(x)$, $\forall x \in (0,c]$ and $f_+(x) < x$, $\forall x\in[c,1)$;
it is identified with the tuple $(u,v,c,\phi,\psi)$ using \eqref{q}.
The set of all Lorenz maps is denoted $\lorenz$.
We identify $\lorenz$ with a subset of $\reals^3\times\sdiff\times\sdiff$ and
use the product topology.
Define
\begin{equation*}
    \lorenz_\delta
    =
    \{(u,v,c,\phi,\psi)\in\lorenz \mid \norm\phi,\norm\psi < \delta\}.
\end{equation*}

We say that a branch $f_\pm$ is \key{trivial} iff $f_\pm(c)=c$
and we say that $f_\pm$ is \key{full} iff $f_-(c)=1$ resp.\ $f_+(c)=0$.
We say that $f$ is \key{full} iff both branches are full.

\subsection{Renormalization}

We say that $f \in \lorenz$ is \key{renormalizable} iff there exists a closed
interval $C$ such that $\Int C \ni c$, $C \neq [0,1]$, and such that the
first-return map to~$C$ is affinely conjugate to some $g \in \lorenz$.
The \key{renormalization operator} $\renorm$ is defined by taking the largest
such $C$ and sending $f$ to $g$.
We call $\renorm f$ the \key{renormalization} of~$f$ and we call $C$ the
\key{return interval} of~$f$.
We say that $f$ is \key{infinitely renormalizable} iff $\renorm^n f$ is
renormalizable, $\forall n\geq 0$.

We say that $f$ is \key{$(a,b)$--renormalizable} iff
\begin{gather*}
    f^1(C_-) > \dots > f^a(C_-) > c, \quad C_- \subset f^{a+1}(C_-) \subset C,
    \quad C_- = C \cap \{x<c\},
    \\
    f^1(C_+) < \dots < f^b(C_+) < c, \quad C_+ \subset f^{b+1}(C_+) \subset C,
    \quad C_+ = C \cap \{x>c\}.
\end{gather*}
In this case $\partial_- C$ resp.\ $\partial_+ C$ are periodic points of
periods $a+1$ resp.\ $b+1$, and identifying $\renorm f$ with
$(u',v',c',\phi',\psi')$ we have
\begin{equation} \label{Rf}
    u' = \frac{\abs{q(C_-)}}{\abs U},
    \quad
    v' = \frac{\abs{q(C_+)}}{\abs V},
    \quad
    c' = \frac{\abs{C_-}}{\abs C},
    \quad
    \phi' = \zoom{\Phi}{U},
    \quad
    \psi' = \zoom{\Psi}{V},
\end{equation}
where $\Phi = f_+^a\circ\phi$, $\Psi = f_-^b\circ\psi$, $U = \inv\Phi(C)$,
$V = \inv\Psi(C)$, and $\zoom{g}{I}$ denotes the affine rescaling of the domain
and range of $g|_I$ to $[0,1]$.\footnote{%
    That is, $\zoom{g}{I} = \inv h_{g(I)}\circ g\circ h_I$, where
    $h_{[x,y]}(t) = (1-t)x + ty$.}
Note that $c'\neq c$ in general.
This movement of the critical point is key in understanding the new
renormalization phenomena we describe here.

We will almost exclusively consider $(a,b)$--renormalizable maps and will
maintain the convention of using primes to denote variables associated with
the renormalization (we consistently use $\deriv$ for derivatives).

\section{Technical lemmas}
\label{technical-lemmas}

In this section we state and prove technical lemmas which we then apply in the
following section to prove things like ergodicity, non-existence of wandering
intervals, etc.
This section can be skipped on a first read-through and referenced back to
later on.
We use the letter $K$ to denote an anonymous constant and follow the
convention of reusing the same letter for different constants.

\subsection{A fundamental lemma}

The following lemma is the starting point for all other results.
Its main content is that the critical values of a renormalizable map are
``expanded away'' from the critical point.
It is fundamental in controlling the expansion along postcritical orbits.

\begin{expansion-lemma}
    For every $\delta < \infty$ and $\gamma \in (0,1)$ there exists $\rho > 0$
    such that if $f \in \lorenz_\delta$ is renormalizable and $c < 1 - \gamma$,
    then $c - f_-^{-1}(c) \geq \rho c$ and
    $f_+^{-1}(c) - c \geq \rho c^{1/\alpha}$.
\end{expansion-lemma}

\begin{remark}
    We will repeatedly make use of this lemma in the following way:
    let $f$ be $(a,b)$--renormalizable so that $f_-(c) > \inv[a] f_+(c)$
    and $f_+(c) < \inv[b] f_-(c)$.
    Because of the lemma the backward orbits $\inv[a] f_+(c)$ and
    $\inv[b] f_-(c)$ approach $1$ and $0$ exponentially fast, respectively.
    Hence, all $(a,b)$--renormalizable maps
    are exponentially close (in $\min\{a,b\}$) to the set of full maps.
    This allows us to make perturbation arguments away from the full maps and
    this is the central idea behind all estimates.

    Note that the convergence of the above backward orbits depends on $\gamma$
    and this leads to us having to treat the case where the critical point $c$
    is bounded away from $0$ and $1$ separately from the case where $c$
    is allowed to approach $0$ or $1$.
\end{remark}

\begin{proof}
    We claim that
    $\forall\delta<\infty$ $\exists\rho \in (0,1)$ such that
    \begin{equation} \label{corner-expansion-claim}
        \frac{\phi^{-1}(c)}{u} \leq 1 - \rho^\alpha.
    \end{equation}
    Let us show that \eqref{corner-expansion-claim} implies the lemma before
    proving the claim.

    From \eqref{corner-expansion-claim}, the fact that
    $\inv f_-(c) = \inv q_- \circ \inv\phi(c)$ and \eqref{q}
    we get $c - \inv f_-(c) \geq \rho c$.
    Since $f_+(c) < \inv f_-(c)$, $f_+(c) = \psi(1 - v)$ and $v \leq 1$ we can
    apply this and \eqref{q}
    to get
    \begin{equation*}
        \left( \frac{\inv f_+(c) - c}{1 - c} \right)^\alpha
        = \frac{\inv\psi(c) - (1-v)}{v}
        = \frac1v \abs*{\inv\psi\left( [f_+(c),c] \right)}
        \geq e^{-\delta} \abs*{c - \inv f_-(c)}.
    \end{equation*}
    Hence $\inv f_+(c) - c \geq \gamma (e^{-\delta} \rho c)^{1/\alpha}$
    and the lemma follows.

    We now prove \eqref{corner-expansion-claim}.
    Let $I = [\inv f_-(c),c]$.  Since $f$ is renormalizable
    $f^2(I) \supset I$.
    Hence, $\exists t\in I$ such that $\deriv f^2(t) \geq 1$ which together
    with \eqref{q}
    and~\eqref{q}
    gives
    \begin{equation*}
        1 \leq \frac{e^{2\delta} \alpha^2 u}{c (1 - c)}
            \left( 1 - \frac{\inv f_-(c)}{c} \right)^{\alpha - 1}
        = \frac{e^{2\delta} \alpha^2 u}{c (1 - c)}
            \left( 1 - \frac{\inv\phi(c)}{u} \right)^{1 - 1/\alpha}
            \mkern-40mu.
    \end{equation*}
    Since $1 - c > \gamma$ and $\inv\phi(c) \leq e^\delta c$ this shows that
    \begin{equation} \label{corner-expansion-phic-over-u}
        \frac{u}{\inv\phi(c)}
            \left( 1 - \frac{\inv\phi(c)}{u} \right)^{1 - 1/\alpha}
        \geq \frac 1K.
    \end{equation}
    The left-hand side approaches~$0$ as $\inv\phi(c) \to u$,
    so \eqref{corner-expansion-phic-over-u}
    implies \eqref{corner-expansion-claim}.
\end{proof}

\subsection{Controlling Koebe space}

In this subsection we deal with the central task of controlling the distortion
of first-entries to the return interval $C$ of a renormalizable map.
We employ the Koebe Lemma~\ref{koebe-lemma} for this purpose and to that end we
spend most of this subsection controlling the ``Koebe space'' around $C$.
Lemma~\ref{monotone-extension} shows where the Koebe space around $C$ is
located and the remaining lemmas are concerned with estimating its size.
The mechanisms which govern the size of the Koebe space are different when the
critical point is bounded compared to when the critical point approaches the
boundary.
The former case is covered by Lemma~\ref{bounded-space} and the latter case is
covered by Lemmas \ref{corner-space-big-branch}
and~\ref{corner-space-small-branch}.
Both cases are then summarized in Lemma~\ref{bounded-geometry}.

\begin{lemma}[Monotone extension] \label{monotone-extension}
    Let $f$ be renormalizable with return times $(m,n)$.
    If $f^k|_I : I \to C$ is a first-entry map to~$C$ with monotone
    extension $f^k|_J$, then $f^k(J) \setminus C$ contains $f^j(C_+)$ in the
    left component and $f^i(C_-)$ in the right component,
    for some $j \in \{1,\dotsc,n-1\}$ and~$i \in \{1,\dotsc,m-1\}$.
\end{lemma}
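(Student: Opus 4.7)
The plan is to track where the critical point $c$ first obstructs extending $J$. Since both branches of a Lorenz map are increasing, $f^k|_J$ is monotone increasing. Write $J=[\alpha,\beta]$, $I=[i_L,i_R]$, $J_L=[\alpha,i_L]$, and $J_R=[i_R,\beta]$; then $f^k(J_L)$ and $f^k(J_R)$ are the left and right components of $f^k(J)\setminus C$ respectively. I will argue the left-component claim; the right-component claim follows by the symmetric argument with $\beta$, $C_-$, and~$m$ in place of $\alpha$, $C_+$, and~$n$.

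By the maximality of~$J$, there is a smallest $\ell_L\in\{0,\dots,k-1\}$ with $f^{\ell_L}(\alpha)=c$ (the boundary case $\alpha\in\{0,1\}$, where $\alpha$ is a fixed point of~$f$, is handled separately: there $f^k(J_L)$ extends to the phase-space boundary and trivially contains any $f^j(C_+)$). Since $f^{\ell_L}$ is increasing on $J_L$, the image $f^{\ell_L}(J_L)=[c,f^{\ell_L}(i_L)]$ lies above~$c$. The first-entry property gives $f^{\ell_L}(I)\cap C=\emptyset$, and since $f^{\ell_L}(I)$ is adjacent to $f^{\ell_L}(J_L)$ on the right (hence also above~$c$), this forces $f^{\ell_L}(i_L)\geq\partial_+C$, so that $f^{\ell_L}(J_L)\supset C_+$.

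Let $K_L\subset J_L$ be the subinterval with $f^{\ell_L}(K_L)=C_+$. Continuity and monotonicity of $f^k|_J$ then yield $f^{\ell_L+t}(K_L)=f^t(C_+)$ for $t\in\{0,\dots,k-\ell_L\}$, so with $j=k-\ell_L\geq 1$ we obtain $f^j(C_+)=f^k(K_L)\subset f^k(J_L)$, as required. For the upper bound $j\leq n-1$: since $f^n(C_+)\subset C$, the equality $j=n$ would contradict $f^k(K_L)\cap C=\emptyset$, and $j\geq n+1$ would force the monotone iteration of $f^k|_{K_L}$ past the step at which $c\in\Int f^n(C_+)$, breaking continuity. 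The main technical nuisance will be this last index-bookkeeping, particularly in the degenerate case where the critical orbit returns exactly to~$c$ so that $c$ only lies on the boundary of $f^n(C_+)$.
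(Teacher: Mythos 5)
Your proof is correct and follows essentially the same strategy as the paper's: use maximality of the monotone extension $J$ to locate the iterate at which a boundary of $J$ hits $c$, use the first-entry property to show the corresponding wing of $J$ then covers $C_+$ (resp.\ $C_-$), and finally use $c\in\Int f^n(C_+)$ (resp.\ $C_-\subset f^m(C_-)\subset C$) together with monotonicity to bound the remaining iterates. The only differences are presentational: you carry along the subinterval $K_L$ and its iterates explicitly and argue on the left side, whereas the paper works on the right and phrases the $i=m$ exclusion as $f^k(I)\cap C_-=\emptyset$, which is the same contradiction as your $j=n$ case. Your closing worry about a degenerate case where $c$ sits only on $\partial f^n(C_+)$ does not arise: renormalizability requires $C_+\subsetneq f^n(C_+)$, so $c\in\Int f^n(C_+)$ always.
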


\begin{remark}
    In particular, if $f$ is $(a,b)$--renormalizable then the Koebe space
    around $C$ extends at least to the preimages $\inv f_-(c)$ and
    $\inv f_+(c)$ since these points are contained in $f^b(C_+)$ and
    $f^a(C_-)$, respectively.
\end{remark}

\begin{proof}
    By definition $\exists i > 0$ such that $f^{k-i}(J) \setminus f^{k-i}(I)$
    contains $C_-$ in its right component.  Hence $f^k(J) \setminus C$ contains
    $f^i(C_-)$ in its right component.  If $i > m$, then $f^{k-i+m}(J)$ would
    contain $c$, which contradicts monotonicity of $f^k|_J$.  If $i = m$, then
    $f^k(I) \cap C_- = \emptyset$ since $f^k|_J$ is monotone and
    $C_- \subset f^m(C_-) \subset C$ by renormalizability, which
    contradicts $f^k(I) = C$.  Hence $i < m$.
    Repeat the argument on the left.
\end{proof}

In the following lemma we consider the case where $c$ is allowed to approach
$0$.
In this case the right branch of $f$ is called the ``big branch'' as it may
take up most of the unit interval.
We will only state and prove results for $c$ close to $0$.
There is always a symmetrical statement and proof for $c$ close to $1$ which
can be obtained by conjugating with the involution $x \mapsto 1 - x$.

\begin{lemma}[Size of $C$ in big branch] \label{corner-space-big-branch}
    For every $\delta < \infty$ and $\eps > 0$ there exists $\gamma > 0$
    such that if $f \in \lorenz_\delta$ is $(a,b)$--renormalizable,
    $c < \gamma$ and $b > \alpha + 1$, then
    $\abs{C_+} \leq \eps \min\{\abs{c - f_-^{-1}(c)}, \abs{f_+^{-1}(c) - c}\}$.
\end{lemma}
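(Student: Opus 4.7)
Let $p := \partial_+ C$, so $\abs{C_+} = p - c$. Set $c_k := f_-^{-k}(c)$, the backward orbit of $c$ under the left branch. The plan is to locate $f_+(p)$ inside the sequence $\{c_k\}$, translate that location into a bound on $p - c$ via the $\alpha$--power singularity of $f_+$ at $c$, and then show the resulting bound beats the linear-in-$c$ Expansion-Lemma lower bounds by exploiting $b > \alpha + 1$.

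First I would derive the combinatorial sandwich $c_b < f_+(p) < c_{b-1}$ and $f_+(c) < c_b$. Since $p$ is periodic of period $b+1$, $f_-^b(f_+(p)) = p$; the lower bound then follows from monotonicity of $f_-^b$ applied to $p > c = f_-^b(c_b)$, the upper bound from $f^b(p) < c$ (i.e.\ $f_-^{b-1}(f_+(p)) < c$), and the inequality for $f_+(c)$ is the $(a,b)$--renormalization remark after the Expansion Lemma. Next I would use $q_+(x) - q_+(c) = v((x-c)/(1-c))^\alpha$ together with bounded distortion of $\psi$ and the observation $v \to 1$ as $c \to 0$ (since $1 - v = \psi^{-1}(f_+(c)) \leq e^\delta c_b$) to conclude, via the mean value theorem applied to $f_+(p) - f_+(c)$, that
\[
(p - c)^\alpha \leq K \bigl( f_+(p) - f_+(c) \bigr) \leq K c_{b-1},
\]
so $p - c \leq K' c_{b-1}^{1/\alpha}$, for constants depending only on $\delta$.

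Next I would control the decay $c_{b-1} \to 0$ as $c \to 0$. Concavity of $q_-$ on $(0, c)$ yields the chord bound $q_-(x) \geq (u/c) x$, and bounded distortion of $\phi$ yields $\phi(y) \geq e^{-\delta} y$; combining, $f_-(x) \geq \lambda x$ on $(0, c)$ with $\lambda := e^{-\delta} u/c$, so $c_k \leq c/\lambda^k$. The crucial point is that $(a, b)$--renormalizability forces $u$ to be considerably larger than one might naively expect: $\phi(u) = f_-(c) > f_+^{-1}(c) \geq c + \rho c^{1/\alpha}$ by the Expansion Lemma, and bounded distortion of $\phi$ then gives $u \geq K'' c^{1/\alpha}$, whence $\lambda \gtrsim c^{1/\alpha - 1}$. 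Substituting,
\[
c_{b-1} \leq K''' c^{\beta}, \qquad \beta := 1 + (b-1)(\alpha - 1)/\alpha,
\]
and the hypothesis $b > \alpha + 1$ is precisely what makes $\beta > \alpha$. Combining with the previous step, $p - c \leq K^\star c^{\beta/\alpha}$ with $\beta/\alpha > 1$, so $p - c = o(c)$ as $c \to 0$. Comparing with the Expansion-Lemma lower bounds $c - f_-^{-1}(c) \geq \rho c$ and $f_+^{-1}(c) - c \geq \rho c^{1/\alpha}$ then yields the claim for $c$ sufficiently small.

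The main obstacle is the hidden lower bound on $u$ in the third step: without it the backward iterates $c_k$ would only decay by a bounded factor per step, which is too slow to overcome the $c \mapsto c^{1/\alpha}$ loss from the critical exponent. It is the fortunate fact that $(a, b)$--renormalizability (with $a \geq 1$) already pins $u \gtrsim c^{1/\alpha}$ through the Expansion Lemma that sharpens $f_-$'s expansion at $0$ just enough for the hypothesis $b > \alpha + 1$ to close the estimate.
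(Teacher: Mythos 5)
Your proof is correct and follows essentially the same strategy as the paper's: locate the return of $C_+$ below a deep backward iterate $c_{b-1}=f_-^{-(b-1)}(c)$, use the Expansion Lemma to get $u\gtrsim c^{1/\alpha}$ and hence strong expansion of $f_-$ near $0$ so that $c_{b-1}=O(c^\beta)$ with $\beta>\alpha$ exactly when $b>\alpha+1$, and convert back through the $\alpha$--power singularity to get $\abs{C_+}=O(c^{\beta/\alpha})=o(c)$. The only cosmetic differences are using a chord/concavity bound for $f_-$ in place of the paper's $\deriv f(0)$ estimate, and working with the endpoint $f_+(p)$ rather than the interval $f(C_+)$.
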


\begin{proof}
    By the Expansion Lemma $\inv f_+(c) - c$ is larger
    than the whole domain of the small branch as $c\downarrow 0$, so we only
    need to prove that $\abs{C_+} \leq \eps \abs{c - \inv f_-(c)}$.

    We claim that $\forall\eps > 0$,
    $\forall\delta < \infty$ $\exists \gamma > 0$ such that if $c < \gamma$
    and $n > \alpha$, then
    \begin{equation} \label{corner-space-big-branch-eps}
        \inv[n] f_-(c) \leq (\eps c)^\alpha.
    \end{equation}
    Before proving the claim let us show how this implies the lemma.  Since
    $f(C_+)$ first enters $C$ after $b$ steps
    $f(C_+) \subset [0, \inv[b+1] f_-(c)]$, so
    $\abs{f(C_+)} < (\eps c)^\alpha$ for $b-1 > \alpha$
    by~\eqref{corner-space-big-branch-eps}.
    Equation~\eqref{q} can be used to estimate
    $\abs{f(C_+)} \geq e^{-\delta} v (\abs{C_+}/(1-c))^\alpha$.
    From $c \geq f_+(c) = \psi(1-v)$ we get
    $v \geq 1 - e^\delta c$.  Taken all together we get
    $\abs{C_+} \leq K \eps c$.
    This finishes the proof, since $c - \inv f_-(c) \geq \rho c$ by the
    Expansion Lemma.

    Let us prove \eqref{corner-space-big-branch-eps}.
    The Expansion Lemma gives
    $c - \inv f_-(c) \geq \rho c$ which means that $0$ uniformly attracts
    $\inv f_-(c)$ under iteration of $\inv f_-$.  Hence
    $\inv[n] f_-(c) / c \leq K \deriv \inv[n]{f(0)}$, $\forall n$.
    Equation \eqref{q}
    can be used to estimate
    $\deriv f(0) \geq e^{-\delta} \alpha u / c$.
    Since $f$ is renormalizable $\inv f_+(c) \leq f_-(c) = \phi(u)$, so the
    Expansion Lemma implies that
    $u \geq c^{1/\alpha} / K$.
    Conclusively,
    $\inv[n] f_-(c) \leq K c (K c^{1-1/\alpha})^n \leq (K c^{1-1/\alpha})^{n-\alpha} c^\alpha$.
    Define $\gamma$ by $(K \gamma^{1-1/\alpha})^{n-\alpha} = \eps^\alpha$ and
    the claim follows.
\end{proof}

The following lemma is a simple induction result on certain backward orbits of
the critical point which will be needed in a few places.

\begin{lemma} \label{critical-preimages}
    Let $\kappa = e^{\delta/(\alpha-1)}\alpha/(\alpha - 1)$
    and $\alpha_n = 1/\alpha^n$.
    Then
    $c - \inv[n] f_-(c) \leq \kappa c (1 - c)^{\alpha_n}$
    and
    $\inv[n] f_+(c) - c \leq \kappa (1 - c) c^{\alpha_n}$,
    for every $n \geq 1$
\end{lemma}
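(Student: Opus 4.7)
The plan is to induct on $n$ using a single distortion estimate for $\inv\phi$. First, combining $\inv f_- = \inv q_- \circ \inv\phi$ with the explicit form of $q_-$ from~\eqref{q} gives the iteration identity
\begin{equation*}
    \eps_k := c - \inv[k] f_-(c) = c\left(\frac{u - \inv\phi(x_{k-1})}{u}\right)^{1/\alpha},
    \qquad x_k := \inv[k] f_-(c),
\end{equation*}
so the target becomes $\eps_n \leq \kappa c(1-c)^{\alpha_n}$.

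Next, I would establish the workhorse distortion bound $(u - \inv\phi(x))/u \leq e^\delta(\phi(u) - x)/\phi(u)$ for $x \in [0,\phi(u)]$. This follows from $\norm\phi \leq \delta$: restricted to $[0,\phi(u)]$ the map $\inv\phi$ has image $[0,u]$, average derivative $u/\phi(u)$, and distortion at most $e^\delta$, so $\max\deriv\inv\phi \leq e^\delta u/\phi(u)$ on this interval and integration over $[x,\phi(u)]$ yields the bound.

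For the inductive step, I would assume $\eps_{k-1} \leq \kappa c(1-c)^{\alpha_{k-1}}$ (trivially true at $k=1$ with $\eps_0 = 0$). Using $\phi(u)\leq 1$ to bound $(\phi(u)-c)/\phi(u) \leq 1-c$ and $\phi(u) \geq c$ (by renormalizability) to bound $\eps_{k-1}/\phi(u) \leq \eps_{k-1}/c$, the distortion bound at $x_{k-1} = c-\eps_{k-1}$ yields
\begin{equation*}
    \frac{u - \inv\phi(x_{k-1})}{u}
    \leq e^\delta\left((1-c) + \frac{\eps_{k-1}}{c}\right)
    \leq e^\delta\bigl((1-c) + \kappa(1-c)^{\alpha_{k-1}}\bigr)
    \leq e^\delta(1+\kappa)(1-c)^{\alpha_{k-1}},
\end{equation*}
where the last step uses $(1-c) \leq (1-c)^{\alpha_{k-1}}$ since $\alpha_{k-1} \leq 1$ and $0 < 1-c \leq 1$. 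Taking $\alpha$-th roots and noting $\alpha\,\alpha_k = \alpha_{k-1}$, the induction closes provided $e^\delta(1+\kappa) \leq \kappa^\alpha$; with $\kappa = e^{\delta/(\alpha-1)}\alpha/(\alpha-1)$ this rearranges to $e^{-\delta/(\alpha-1)} + \alpha/(\alpha-1) \leq (\alpha/(\alpha-1))^\alpha$, and since the first summand is $\leq 1$ it suffices to check $1 + y \leq y^\alpha$ with $y = \alpha/(\alpha-1) > 1$, which is elementary.

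Finally, the companion bound $\inv[n] f_+(c) - c \leq \kappa(1-c)c^{\alpha_n}$ follows by applying the first inequality to the conjugate $x \mapsto 1 - f(1-x)$, which swaps $f_-\leftrightarrow f_+$, $c\leftrightarrow 1-c$, $u\leftrightarrow v$, and $\phi\leftrightarrow\psi$. The main obstacle is picking the inductive split of $(\phi(u) - x_{k-1})/\phi(u)$ correctly: the piece $(\phi(u)-c)/\phi(u)$ must be absorbed into the base $(1-c)$-scale while the perturbation $\eps_{k-1}/\phi(u)$ must fit into the bootstrapped $(1-c)^{\alpha_{k-1}}$-scale, and the resulting algebraic inequality $e^\delta(1+\kappa) \leq \kappa^\alpha$ pins down the precise form of $\kappa$.
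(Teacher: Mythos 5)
Your proof is correct, but it takes a different route from the paper's. The paper proves the $\inv[n]f_+$ bound directly via $\inv f_+(x) - c \leq (1-c)(e^\delta x)^{1/\alpha}$ and strengthens the inductive hypothesis to $\inv[n]f_+(c) - c \leq (1-c)\,e^{s_{n-1}\delta/\alpha}\,s_{n-1}\,c^{\alpha_n}$ with an $n$-dependent constant where $s_n = 1 + \alpha_1 + \dots + \alpha_n$; the constant converges to $\kappa$ and the induction closes via the elementary $(1+s)^{1/\alpha} \leq 1 + s/\alpha$. You instead prove the $\inv[n]f_-$ bound directly and obtain the $f_+$ side by conjugating with $x \mapsto 1 - x$, and you fix the constant $\kappa$ from the start; this is cleaner to state but relocates the difficulty into the algebraic constraint $e^\delta(1+\kappa) \leq \kappa^\alpha$, which you correctly reduce to $1 + y \leq y^\alpha$ for $y = \alpha/(\alpha-1)$. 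That inequality is true (e.g.\ $1 + 1/y \leq e^{1/y} \leq e^{\log y/(y-1)} = y^{1/(y-1)}$ using $\log z \leq z - 1$, then multiply by $y$), but it is less self-evident than the Bernoulli step in the paper's version, and it silently dictates the precise form of $\kappa$ in the statement --- which is exactly the source of the constant that the paper's adaptive-$s_n$ argument makes transparent. Your distortion estimate $(u - \inv\phi(x))/u \leq e^\delta\,(\phi(u)-x)/\phi(u)$ is the mirror of the paper's $\inv\psi(x) \leq e^\delta x$ and is justified as you indicate by the nonlinearity bound of Lemma~\ref{nonlinearity-prop}; the split $\frac{\phi(u)-x_{k-1}}{\phi(u)} \leq (1-c) + \eps_{k-1}/c$ uses $c < \phi(u) \leq 1$ (the left inequality is $f_-(c) > c$) exactly as you say.
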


\begin{proof}
    Let $s_n = 1 + \alpha_1 + \dots + \alpha_n$.
    We claim that for all $n \geq 1$
    \begin{equation} \label{critical-preimages-induction}
        \inv[n]f_+(c) - c
        \leq (1 - c) e^{s_{n-1} \delta/\alpha} s_{n-1} c^{\alpha_n}.
    \end{equation}
    From this the lemma follows since $s_n < \alpha/(\alpha - 1)$.

    The proof of the claim is by induction.  From \eqref{q}
    and $v\leq 1$ we get
    \begin{equation*}
        \frac{\inv f_+(x) - c}{1 - c}
        \leq \inv\psi(x)^{1/\alpha}
        \leq (e^\delta x)^{1/\alpha}.
    \end{equation*}
    Let $x=c$ to prove the base case.  Make the induction assumption that
    \eqref{critical-preimages-induction} holds for some $n$.  Then
    \begin{align*}
        \frac{\inv[(n+1)] f_+(c) - c}{1 - c}
        &\leq e^{\delta/\alpha} \inv[n]f_+(c)^{1/\alpha}
        \leq e^{\delta/\alpha}
            \left( c + e^{s_{n-1} \delta/\alpha}
            s_{n-1} c^{\alpha_n} \right)^{1/\alpha}
        \\
        &\leq e^{\delta/\alpha(1 + s_{n-1}/\alpha)}
            \left( c^{1-\alpha_n}
            e^{-s_{n-1} \delta/\alpha}
            + s_{n-1} \right)^{1/\alpha}
            c^{\alpha_{n+1}}.
    \end{align*}
    The term in parenthesis is less than $1 + s_{n-1}$.  Use the fact that
    $(1 + s_{n-1})^{1/\alpha} < 1+s_{n-1}/\alpha = s_n$ to finish the proof.
\end{proof}

The following lemma is the counterpart to Lemma~\ref{corner-space-big-branch}
but for the ``small branch,'' i.e.\ the left branch when $c$ is allowed to
approach $0$.

\begin{lemma}[Size of $C$ in small branch] \label{corner-space-small-branch}
    For every $\delta < \infty$ and $\eps > 0$ there exist $N < \infty$ and
    $\gamma > 0$ such that if $f \in \lorenz_\delta$ is
    $(a,b)$--renormalizable, $c < \gamma$ and $a \geq N$, then
    $\abs{C_-} \leq \eps \min\{\abs{c - f_-^{-1}(c)}, \abs{f_+^{-1}(c) - c}\}$.
\end{lemma}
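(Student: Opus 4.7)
The plan is to mirror the proof of Lemma~\ref{corner-space-big-branch}. The identity $\abs{f_-(C_-)} \in [e^{-\delta}, e^\delta] \cdot u (\abs{C_-}/c)^\alpha$, which comes from the explicit form of $q_-$, reduces the desired bound $\abs{C_-} \leq \eps c$ to the inequality $\abs{f_-(C_-)} \leq \eps^\alpha e^{-\delta} u$.

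The containment $f_-(C_-) \subset \inv[a]f_+(C)$ is the small-branch analog of $f(C_+) \subset [0, \inv[b+1]f_-(c)]$: since the orbit of $C_-$ stays in $(c,1]$ for $a$ steps, $f^{a+1}|_{C_-} = f_+^a \circ f_-|_{C_-}$ and $f^{a+1}(C_-) \subset C$. Moreover, because $\ell_- = \partial_- C$ is $(a+1)$--periodic with orbit above $c$, the periodicity relation $f_+^a(f_-(\ell_-)) = \ell_-$ together with injectivity of $f_+^a$ on its natural domain forces $f_-(\ell_-) = \inv[a]f_+(\ell_-)$; that is, $f_-(C_-)$ and $\inv[a]f_+(C)$ share a left endpoint. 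Hence $\abs{f_-(C_-)} \leq \abs{\inv[a]f_+(C)}$ and $\phi(u) = \sup f_-(C_-) \geq \inv[a]f_+(\ell_-)$.

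The crux is uniform exponential contraction of $\inv f_+$ near its fixed point~$1$. For $c<\gamma$ small, $v \geq 1 - e^\delta c$ and the standard bounds on $\psi'$ give $f_+'(1) = \psi'(1) v\alpha/(1-c) \geq \lambda$ for some $\lambda > 1$ depending only on $\delta$ and $\alpha$. Iterating the Expansion Lemma's estimate $\inv f_+(c) - c \geq \rho c^{1/\alpha}$ through the explicit form of $\inv f_+$ (analogous to the proof of Lemma~\ref{critical-preimages}) produces the lower bound $\inv[n]f_+(c) - c \geq K c^{1/\alpha^n}$; consequently there is a threshold $N_1 = N_1(\delta, \gamma)$ such that for $j \geq N_1$ both $\inv[j]f_+(c)$ and $\inv[j]f_+(\ell_-)$ lie in a neighborhood $[1-\eta, 1]$ on which $f_+' \geq \lambda$.

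Invoking the Koebe Lemma with the Koebe space supplied by Lemma~\ref{monotone-extension}, we obtain $\abs{\inv[a]f_+(C)} \leq K \lambda^{-(a-N_1)}$ and $1 - \inv[a]f_+(\ell_-) \leq K' \lambda^{-(a-N_1)}$ for $a \geq N_1$. The second bound forces $\phi(u) \geq 1/2$ and hence $u \geq e^{-\delta}/2$ once $a$ is large; the first then delivers $\abs{f_-(C_-)} \leq \eps^\alpha e^{-\delta} u$ upon choosing $N := N_1 + O(\log(1/\eps)/\log\lambda)$, which depends only on $\delta$ and $\eps$. The main obstacle is pinning down the uniform hyperbolicity bound $f_+'(1) \geq \lambda > 1$ and the uniform threshold $N_1$ across $\lorenz_\delta$ (both must ultimately depend only on $\delta$, $\eps$ once $\gamma$ is chosen); after these are in place, the Koebe distortion bookkeeping and the combinatorial accounting of the backward orbit are routine, and the min on the right is handled as in the big-branch case using the Expansion Lemma.
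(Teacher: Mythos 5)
Your opening reductions are sound: the periodic orbit identity $f_-(\ell_-) = \inv[a]f_+(\ell_-)$ (where $\ell_- = \partial_- C$), the containment $f_-(C_-) \subset \inv[a]f_+(C)$, and the power-law translation from $\abs{C_-}/c$ to $\abs{f_-(C_-)}/u$ are all correct. The problem is in the ``crux,'' which is precisely where the real difficulty of this lemma lives, and your mechanism there does not work.

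First, the asserted uniform lower bound $f_+'(1) \geq \lambda > 1$ depending only on $\delta,\alpha$ is not available. The computation gives $f_+'(1) = \psi'(1)\,v\alpha/(1-c) \geq e^{-\delta}(1-e^{\delta}\gamma)\alpha$, which is $<1$ whenever $\delta > \log\alpha$; and this lemma has no hypothesis $\delta < \ubdist$ — it is stated for arbitrary $\delta < \infty$. (The Lorenz inequality $f_+(x)<x$ only forces $f_+'(1)\geq 1$ with no margin, and nothing in the $(a,b)$--renormalizability condition improves this to $1+\eps$.) Second, and more decisively, even granting $f_+'(1)\geq\lambda>1$, the threshold $N_1$ you want cannot be uniform in $c<\gamma$. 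The backward orbit starts at $\ell_- < c$, which is near $0$, and Lemma~\ref{critical-preimages} gives $\inv[j]f_+(c) - c \leq \kappa (1-c) c^{1/\alpha^{j}}$: this stays close to $0$ until $\alpha^{j} \gtrsim \log(1/c)$, i.e.\ the first entry time into a fixed neighborhood $[1-\eta,1]$ grows like $\log_\alpha\log(1/c)$ and diverges as $c\downarrow 0$. Linear contraction near the fixed point at $1$ only kicks in after the orbit gets there; the climb from $c$ to a definite neighborhood of $1$ is governed by the power law, not by $f_+'(1)$, and takes unboundedly many steps over $c\in(0,\gamma)$. Consequently the bounds $\abs{\inv[a]f_+(C)} \leq K\lambda^{-(a-N_1)}$ and $1-\inv[a]f_+(\ell_-) \leq K'\lambda^{-(a-N_1)}$ cannot be established with an $N$ depending only on $\delta$ and $\eps$. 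This is not a ``routine'' technical point: it is exactly the phenomenon that makes the small-branch case harder than the big-branch one (where $\deriv f_-(0) \approx \alpha u/c$ blows up as $c\to 0$, so contraction is \emph{stronger} near the corner). The paper circumvents it entirely by tracking the relative space $L_k/S_k$ along the orbit and arguing by dichotomy — either some $R_k$ already has $\theta_1$-space or macroscopic space, or iterating the cross-ratio estimates forces $(\abs{L_0}/\abs{S_0})^{1-1/\alpha} \geq \rho^{-N}/K$, which is absurd for large $N$. No uniform hyperbolicity of the fixed point at $1$ is ever used. You would need to replace your hyperbolicity step with something of that nature.
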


\begin{proof}
    We only need to consider the case
    $\abs{C_-} > \abs{C_+}$, else we could apply
    Lemma~\ref{corner-space-big-branch}.  As in the proof of that
    lemma it suffices to show that $\abs{C_-} \leq \eps \abs{c - \inv f_-(c)}$.

    Let $T = [c, f_-(c)]$, $R_k = f^{a+1-k}(C_-)$ and
    $\theta_0 = \eps^\alpha e^{-\delta}$.
    If $\abs{R_a} \leq \theta_0 \abs{T}$, then \eqref{L-over-LM} gives
    $\abs{C_-} / \abs{c - \inv f_-(c)}
        = (\abs{\inv\phi(R_a)} / \abs{\inv\phi(T)})^{1/\alpha}
        \leq (e^\delta \abs{R_a} / \abs{T})^{1/\alpha} \leq \eps$.
    Hence we are done if there is enough space around $R_a$ inside $T$.
    The following claim gives two sufficient conditions for this to happen.

    Let $S_k = [\inv[k+1] f_+(c), R_k]$ for $k\leq a$,
    $S_{a+1} = \inv f_-(S_a)$ and $L_k = S_k \setminus R_k$ for $k\leq a+1$.
    We claim that $\forall\delta < \infty, \eps > 0$
    $\exists\theta_1,\theta_2 < \infty$ such that if
    \begin{enumerate}[label=(\roman*)]
        \item $\abs{R_k} \leq \theta_1 \abs{L_k}$ for some $k \leq a$, or if
            \label{corner-space-small-branch-i}
        \item $[f_+(c),1]$ contains a $\inv\theta_2$--scaled neighborhood of
            $R_k$ for some $k \leq a-1$,
            \label{corner-space-small-branch-ii}
    \end{enumerate}
    then $\abs{R_a} \leq \theta_0 \abs{T}$.
    Let use prove the claim.

    Assume \ref{corner-space-small-branch-i}.  From \eqref{M-over-MR-weak}
    we get
    \begin{equation} \label{corner-space-small-branch-L-over-S}
        \frac{\abs{L_{i+1}}}{\abs{S_{i+1}}}
        = \frac{\abs{\inv q_+ \circ \inv\psi(L_i)}}%
            {\abs{\inv q_+ \circ \inv\psi(S_i)}}
        \geq \frac{\abs{\inv\psi(L_i)}}{\abs{\inv\psi(S_i)}}
        \geq e^{-\delta \abs{S_i}} \frac{\abs{L_i}}{\abs{S_i}}
    \end{equation}
    and consequently
    $\abs{L_a}/\abs{S_a} \geq \exp\{-\delta \sum\abs{S_i}\}
        \abs{L_k}/\abs{S_k} \geq e^{-2\delta} \inv\theta_1$,
    since $\sum\abs{S_i} \leq 2$.
    Hence, $\abs{R_a}/\abs{S_a} \leq 1 - e^{-2\delta} \inv\theta_1$ and since
    $S_a \subset T$, it follows that $\abs{R_a} \leq \theta_0 \abs{T}$ for
    $\theta_1$ sufficiently small.

    Assume \ref{corner-space-small-branch-ii}.  The branch
    $f^{a-k}|_{R_a}: R_a \to R_k$ has monotone extension whose image is
    $[f_+(c),1]$ and its domain is in~$[c,1]$.  Hence the claim follows from
    the Macroscopic Koebe Principle (see~\cite{dMvS93}*{p.~287})
    by choosing $\theta_2$ small enough.  This
    concludes the proof of the claim.

    Now assume that $\delta$ and $\eps$ have been chosen and that the
    constants $\theta_1$ and~$\theta_2$ have been determined.
    If \ref{corner-space-small-branch-i} was true then there would be
    nothing to prove so assume that it is false.
    Let $N \geq 3$ be given, assume $a \geq N$, and choose $\gamma > 0$ such
    that if $c < \gamma$, then
    $\dist(R_i, 1) \geq \inv\theta_2 \abs{R_i}$,
    $\forall i = 0,\dotsc,N-1$.
    This is possible, since $R_i \subset [0, \inv[N] f_+(c)]$
    $\forall i = 0,\dotsc,N-1$
    and $\inv[N] f_+(c) - c \leq K c^{1/\alpha^N}$ by
    Lemma~\ref{critical-preimages}.
    If one of the $R_i$ also had space on the left, i.e.\
    if $\dist(R_k, f_+(c)) \geq \inv\theta_2 \abs{R_k}$ for some $k < N$,
    then we could apply \ref{corner-space-small-branch-ii} and be done.
    Consequently we assume that \ref{corner-space-small-branch-ii}
    does not hold for any $k < N$.  Let us prove that this leads to a
    contradiction.

    Let $\hat R_i = [f_+(c),R_i]$ and $\hat L_i = [f_+(c),L_i]$.
    By assumption $\abs{R_i} > \theta_1 \abs{L_i}$ and
    $\abs{\hat R_i} > (1 + \theta_2) \abs{\hat L_i}$,
    $\forall i \in \{1,\dotsc,N-1\}$.
    These are exactly the conditions needed for~\eqref{M-over-MR}.
    Repeating the estimate in \eqref{corner-space-small-branch-L-over-S}
    but using \eqref{M-over-MR} instead of \eqref{M-over-MR-weak} we get that
    $\exists \rho > 0$ (only depending on $\theta_1$ and~$\theta_2$) such
    that
    \begin{equation} \label{corner-space-small-branch-L-over-S-improved}
        \frac{\abs{L_i}}{\abs{S_i}}
        \geq e^{-\delta \abs{S_{i-1}}} \frac 1\rho
            \frac{\abs{L_{i-1}}}{\abs{S_{i-1}}},
            \qquad \forall i \in \{1,\dotsc,N-1\}.
    \end{equation}
    From \eqref{L-over-LM} and \eqref{M-over-LM} we get
    \begin{align} \label{corner-space-small-branch-L-over-S-edges}
        \frac{\abs{L_1}}{\abs{S_1}}
            &\geq e^{-\delta\abs{S_0}/\alpha}
            \left( \frac{\abs{L_0}}{\abs{S_0}} \right)^{1/\alpha},
        &
        \frac{\abs{L_{a+1}}}{\abs{S_{a+1}}}
        &\geq e^{-\delta\abs{S_a}} \frac 1\alpha
        \frac{\abs{L_a}}{\abs{S_a}},
    \end{align}
    respectively.  Since $L_{a+1} \subset L_0$,
    $R_{a+1} = C_- \subset R_0 \subset C$
    we can estimate
    \begin{equation} \label{corner-space-small-branch-L-over-S-loop}
        \frac{\abs{L_{a+1}}}{\abs{S_{a+1}}}
        = \frac{\abs{L_{a+1}}}{\abs{L_{a+1}} + \abs{R_0}}
            \left( 1 +
            \frac{\abs{R_0} - \abs{R_{a+1}}}{\abs{S_{a+1}}} \right)
        \leq \frac{\abs{L_0}}{\abs{S_0}}
            \left( 1 + \frac{\abs{C_+}}{\abs{C_-}} \right).
    \end{equation}
    By assumption $\abs{C_-} > \abs{C_+}$, which combined with
    \eqref{corner-space-small-branch-L-over-S}--\eqref{corner-space-small-branch-L-over-S-loop}
    gives
    \begin{equation}
        \frac{\abs{L_0}}{\abs{S_0}}
        \geq \inv[N+2]\rho \frac{1}{2\alpha} e^{-\delta \sum\abs{S_i}}
        \left( \frac{\abs{L_0}}{\abs{S_0}} \right)^{1/\alpha}
        \mkern-20mu.
    \end{equation}
    Since $\sum\abs{S_i} \leq 2$ this implies that
    $(\abs{L_0}/\abs{S_0})^{1-1/\alpha} \geq \inv[N]\rho/K$ which is
    impossible for $N$ large enough as the left-hand side is at most $1$
    but the right-hand side is unbounded in~$N$.  We conclude that either
    \ref{corner-space-small-branch-i} holds for some $k\leq a$ or
    \ref{corner-space-small-branch-ii} holds for some $k < N$.
\end{proof}

The following lemma controls the Koebe space when the critical point is bounded
away from the boundary.
Note that in this case we get explicit bounds on the size of the Koebe space.
We need these explicit bounds later on.

\begin{lemma}[Size of $C$] \label{bounded-space}
    For every closed interval $\Delta \subset (0,1)$ and $\delta < \ubdist$
    there exist $N < \infty$, $K < \infty$ and $\lambda > 1$ such that if
    $f \in \lorenz_\delta$ is $(a,b)$--renormalizable,
    $\min(a,b) \geq N$ and $c \in \Delta$, then
    \begin{equation*}
        \abs{C} \leq
            K \min\left\{\abs{c - f_-^{-1}(c)}, \abs{f_+^{-1}(c) - c}\right\}
            \lambda^{-\min(a,b)/\alpha}.
    \end{equation*}
\end{lemma}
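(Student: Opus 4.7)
The plan is to adapt the mechanism of Lemma~\ref{corner-space-small-branch} to the bounded-critical-point regime, where the key simplification is that both $c - f_-^{-1}(c)$ and $f_+^{-1}(c) - c$ are now uniformly bounded below by the Expansion Lemma (since $c \in \Delta$), and in particular $|c - \inv f_-(c)|$ and $|\inv f_+(c) - c|$ are comparable. The two branches are therefore symmetric up to constants and it suffices to bound $|C_-|/|c-\inv f_-(c)|$ and $|C_+|/|\inv f_+(c)-c|$ separately by $K\lambda^{-\min(a,b)/\alpha}$.

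First I would work on the left side: set $T = [c, f_-(c)]$, $R_k = f^{a+1-k}(C_-)$ and $S_k = [\inv[k+1]f_+(c), R_k]$, with $L_k = S_k \setminus R_k$, exactly as in the small-branch lemma. By Lemma~\ref{monotone-extension} (more precisely the remark following it), $S_k$ really is contained in the Koebe space of the relevant branch, so the distortion bound $|C_-|/|c-\inv f_-(c)| \leq (e^\delta |R_a|/|T|)^{1/\alpha}$ still applies. The task is then to show that $|R_a|/|T|$ decays at a definite exponential rate in $a$.

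Next I would derive a one-step recursion of the form
\begin{equation*}
    \frac{|L_{i+1}|}{|S_{i+1}|} \geq \frac{\alpha}{e^{2\delta}}\, e^{-\delta |S_i|}\,\frac{|L_i|}{|S_i|},
    \qquad 1 \leq i \leq a-1,
\end{equation*}
which is the crucial place where the hypothesis $\delta < \tfrac12\log\alpha$ enters: because $c$ and $f_-(c)$ are now bounded away from $0$ and $1$, the derivative of $\inv q_+ \circ \inv\psi$ at a point of $T$ is bounded below by $\alpha/e^{2\delta} > 1$ (rather than giving only the $1/\alpha$-power seen in the corner case). Combined with the summability $\sum|S_i| \leq 2$ and boundary estimates analogous to \eqref{corner-space-small-branch-L-over-S-edges}, iterating this recursion $a$ times yields $|R_a|/|T| \leq K\lambda^{-a}$ for some $\lambda > 1$ depending only on $\alpha$, $\delta$, $\Delta$. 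Taking the $1/\alpha$-th power in the distortion bound then produces the claimed factor $\lambda^{-a/\alpha}$; a symmetric argument on the right branch gives the same bound with $b$ in place of $a$, and the minimum of the two yields the stated estimate.

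The main obstacle is seeding the recursion, i.e.\ obtaining a uniform lower bound for $|L_k|/|S_k|$ at some initial index $k \leq N$. As in Lemma~\ref{corner-space-small-branch}, the naive danger is that the early $R_k$ might fill essentially all of the available interval, in which case the recursion starts from a tiny seed. Here one argues by dichotomy: either some $R_k$ with $k < N$ has definite space on both sides inside $[f_+(c),1]$ (and the Macroscopic Koebe Principle supplies the seed), or the opposite assumption combined with the contractive recursion produces $(|L_0|/|S_0|)^{1-1/\alpha} \geq \inv[N]\rho / K$, which is impossible once $N$ is large enough. Here $c \in \Delta$ is used to guarantee uniform bounds on the ambient geometry (including the positions of $\inv[N]f_\pm(c)$ via Lemma~\ref{critical-preimages}), and it is exactly this uniformity in $\Delta$ and $\delta$ that makes $K$, $N$, and $\lambda$ depend only on $\Delta$ and~$\delta$, as required.
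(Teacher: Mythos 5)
Your plan is modeled on the small-branch Lemma~\ref{corner-space-small-branch}, whereas the paper's proof argues as in the \emph{big-branch} Lemma~\ref{corner-space-big-branch}, and the route you chose has a genuine gap. The claimed recursion
\[
\frac{\abs{L_{i+1}}}{\abs{S_{i+1}}} \geq \frac{\alpha}{e^{2\delta}}\, e^{-\delta\abs{S_i}}\,\frac{\abs{L_i}}{\abs{S_i}}
\]
does not follow from the derivative bound you cite. A pointwise lower bound on $\deriv(\inv q_+\circ\inv\psi)$ expands $L_i$ and $S_i$ by essentially the same amount, so the ratio $\abs{L_i}/\abs{S_i}$ is only \emph{preserved} up to distortion, exactly as in \eqref{corner-space-small-branch-L-over-S} (which rests on \eqref{M-over-MR-weak}, not on any derivative estimate). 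The extra multiplicative factor $\inv\rho > 1$ that appears in \eqref{corner-space-small-branch-L-over-S-improved} comes from the \emph{geometric} hypotheses of \eqref{M-over-MR}, not from a uniform expansion rate, and is not available for free. Even setting that aside, $\abs{L_i}/\abs{S_i}$ is bounded above by $1$, so it cannot grow by a definite factor $>1$ over $\sim a$ consecutive steps; the quantity you iterate saturates and can never deliver the exponential rate $\lambda^{-a}$ the lemma demands. (The small-branch lemma's output is a fixed $\eps$-smallness, not a rate, precisely for this reason.)

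The correct use of the observation $\alpha e^{-2\delta}>1$ is as a bound on the \emph{multiplier} of the fixed point $0$ of $\inv f_-$, not inside a ratio recursion. Since $c\in\Delta$ forces $u/c\geq e^{-\delta}$, one gets $\deriv f_-(0)\geq \alpha e^{-2\delta}=:\lambda>1$ uniformly over the maps under consideration; this is where $\delta<\tfrac12\log\alpha$ enters. By the Expansion Lemma $0$ attracts $\inv f_-(c)$ with this multiplier, so $\inv[n]f_-(c)\leq K\lambda^{-n}$. One then repeats the argument of Lemma~\ref{corner-space-big-branch}: $f(C_+)\subset[0,\inv[b+1]f_-(c)]$ gives $\abs{f(C_+)}\leq K\lambda^{-b}$, and the $\alpha$-power scaling of $q_+$ in \eqref{q} (with $v$ close to $1$ and $1-c$ bounded below since $c\in\Delta$) converts this to $\abs{C_+}\leq K\lambda^{-b/\alpha}$. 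The Expansion Lemma also bounds $\abs{c-\inv f_-(c)}$ and $\abs{\inv f_+(c)-c}$ below by constants because $c\in\Delta$, which absorbs the $\min\{\cdot,\cdot\}$ into $K$. The symmetric argument at $1$ gives $\abs{C_-}\leq K\lambda^{-a/\alpha}$, and combining the two estimates gives the bound on $\abs C$. In short: you identified the right inequality $\alpha e^{-2\delta}>1$, but applied it in a recursion where it is inert; it should be used directly as the contraction rate of the backward orbit, after which the rest of the proof is the big-branch argument verbatim.
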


\begin{remark}
    The \emph{only} reason for demanding $\delta < \ubdist$ is because it
    directly implies that $\deriv f(x) > 1$ for $x = 0, 1$ as shown in the
    proof below.
    It is by no means necessary but it makes many arguments simpler.
    We will later see that the distortion of the renormalization is tiny when
    the return times are large, so this condition can be automatically
    satisfied by renormalizing once.
    Hence we allow ourselves the convenience to assume $\delta < \ubdist$ from
    now on.
\end{remark}

\begin{proof}
    Let $\lambda$ be the infimum of $\deriv f(0)$ over all $f$ satisfying the
    assumptions of the lemma.
    Then $\lambda \geq e^{-\delta} \alpha u / c$.  From
    $c \leq f_-(c) = \phi(u)$ and $\phi(u) \leq e^\delta u$, we get
    $u/c \geq e^{-\delta}$.
    Since $\delta < \ubdist$ it follows that
    $\lambda \geq \alpha e^{-2\delta} > 1$.
    By the Expansion Lemma
    $\inv[n] f_-(c) \leq K \inv[n]{\deriv f(0)} \leq K \inv[n] \lambda$.
    Now argue as in the proof of Lemma~\ref{corner-space-big-branch} to get
    $\abs{C_+} \leq K \lambda^{-b/\alpha}$.
    Use the Expansion Lemma to see that the lemma holds with $\abs{C_+}$ in
    place of~$\abs{C}$.  Since $c$ is bounded we can repeat this argument for
    $\abs{C_-}$ and since it holds for both $\abs{C_\pm}$, it must hold
    for~$\abs{C}$.
\end{proof}

Finally, the following lemma summarizes the previous results on the size of the
Koebe space independently of the whether the critical point is bounded or not.
It shows that we can make the Koebe space large by increasing the
return times.

\begin{lemma}[Koebe space] \label{bounded-geometry}
    For every $\delta < \ubdist$ and $\tau > 0$ there exists $N < \infty$
    such that if $f \in \lorenz_\delta$ is $(a,b)$--renormalizable and
    $\min\{a,b\} \geq N$, then $[f_-^{-1}(c), f_+^{-1}(c)]$ contains a
    $\tau$--scaled neighborhood of~$C$.
\end{lemma}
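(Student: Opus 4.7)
The plan is to reduce the desired inclusion to a single size bound on $|C|$ and then dispatch that bound by combining Lemmas \ref{bounded-space}, \ref{corner-space-big-branch} and \ref{corner-space-small-branch}, splitting cases according to whether the critical point is bounded away from $\{0,1\}$.

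I would first observe that, writing $\eta = 1/(1+\tau)$ and using $|C_\pm|\leq |C|$, the inequality
\begin{equation*}
    |C| \leq \eta \min\bigl\{c - f_-^{-1}(c),\; f_+^{-1}(c) - c\bigr\}
\end{equation*}
implies $c - f_-^{-1}(c) \geq (1+\tau)|C| \geq |C_-| + \tau|C|$ and its mirror image on the right, which are exactly the two conditions for $[f_-^{-1}(c), f_+^{-1}(c)]$ to contain a $\tau$--scaled neighborhood of $C$. So it is enough to prove this single inequality once $\min(a,b)$ is sufficiently large.

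Next I would pick an auxiliary parameter $\gamma_0 > 0$ and analyze three mutually exhaustive regimes. If $c \in \Delta := [\gamma_0, 1-\gamma_0]$, Lemma \ref{bounded-space} supplies constants $K$ and $\lambda > 1$ with $|C| \leq K\lambda^{-\min(a,b)/\alpha} \min\{\cdots\}$, so the bound follows once $\min(a,b)$ is large enough to force the prefactor below $\eta$. If $c < \gamma_0$, I would apply Lemma \ref{corner-space-big-branch} with $\eps = \eta/2$ to bound $|C_+|$ by $(\eta/2)\min\{\cdots\}$ and Lemma \ref{corner-space-small-branch} also with $\eps = \eta/2$ to bound $|C_-|$; summing the two yields $|C|\leq \eta\min\{\cdots\}$. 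The symmetric case $c > 1 - \gamma_0$ is handled by conjugating with the involution $x \mapsto 1-x$, as noted just before Lemma \ref{corner-space-big-branch}.

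The only delicate point is the bookkeeping, which I would organize in the following order so as to respect all hypotheses. First fix $\eps = \eta/2$, and let Lemmas \ref{corner-space-big-branch} and \ref{corner-space-small-branch} produce the associated $\gamma_1$, $\gamma_2$ and $N_2$; set $\gamma_0 := \min(\gamma_1,\gamma_2)$, which in turn fixes $\Delta$; then apply Lemma \ref{bounded-space} to this $\Delta$ to extract $N_1$; finally take $N := \max\{N_1, N_2, \alpha+2\}$, which also automatically ensures the hypothesis $b > \alpha+1$ required by Lemma \ref{corner-space-big-branch}. Beyond this accounting there is no real obstacle, since the three previous lemmas already encode the entire geometric content of the statement.
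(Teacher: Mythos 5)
Your proposal is correct and follows the same case split as the paper's own proof: combine Lemmas~\ref{corner-space-big-branch} and~\ref{corner-space-small-branch} for $c$ near the boundary and Lemma~\ref{bounded-space} for $c$ in a compact interval, then take the maximum of the resulting $N$'s. You simply make explicit the reduction to the inequality $\abs{C}\leq (1+\tau)^{-1}\min\{c-f_-^{-1}(c),\,f_+^{-1}(c)-c\}$, which the paper leaves implicit.
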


\begin{proof}
    From Lemmas \ref{corner-space-big-branch}
    and~\ref{corner-space-small-branch} we get an $N_0 < \infty$ and a closed
    interval $\Delta \subset (0,1)$ such that the statement is true if
    $c \notin \Delta$ and $\min\{a,b\} \geq N_0$.
    From Lemma~\ref{bounded-space} we get an $N_1 < \infty$ such that the
    statement holds for $c \in \Delta$ and $\min\{a,b\} \geq N_1$.
    Let $N = \max\{N_0,N_1\}$ to finish the proof.
\end{proof}

\subsection{Controlling the critical point}

Having established the necessary results to control distortion in the previous
subsection we now turn to the main difficulty of Lorenz renormalization,
namely to control how the critical point moves under renormalization.
This is an essential problem --- we will later see that the fact that the
critical point may move under renormalization contributes to an ``extra''
unstable direction when the return times are large.
The mechanism behind this phenomenon is given by the important Flipping Lemma
below.

The following lemma provides a central relation between the critical point of
$f$ and the critical point of $\renorm f$.

\begin{lemma}[Position of the critical point] \label{crit-position}
    Let $\relc(f) = c / (1 - c)$ denote the relative critical point and let
    $\relv(f) = f_-(c) / (1 - f_+(c))$ denote the relative critical value.
    If~$f \in \lorenz_\delta$ is $(a,b)$--renormalizable, then
    \begin{equation*}
        \frac{\relc(\renorm f)^\alpha}{\relv(\renorm f)} =
        \kappa
        \frac{\relc(f)^\alpha}{\relv(f)}
        \frac{\deriv f^b(x)}{\deriv f^a(y)},
    \end{equation*}
    for some
    $x \in f(C_+)$, $y \in f(C_-)$ and
    $e^{-2 \delta} \leq \kappa \leq e^{2 \delta}$.
\end{lemma}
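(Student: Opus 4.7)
The plan is to unravel everything using the renormalization formulas \eqref{Rf}, the fact that the endpoints $C^\pm$ of the return interval are periodic (of periods $a+1$ and $b+1$), and then apply the mean value theorem together with the distortion bounds $\norm\phi,\norm\psi\leq\delta$.

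First, since $\renorm f$ is affinely conjugate via $h_C^{-1}$ to the first--return map $F$ on $C$, one has $\relc(\renorm f) = \abs{C_-}/\abs{C_+}$, $(\renorm f)_-(c') = (\Phi(u)-C^-)/\abs{C}$ and $1-(\renorm f)_+(c') = (C^+-\Psi(1-v))/\abs{C}$. The crucial observation is that $C^-$ is a periodic point of $F|_{C_-}=\Phi\circ q_-$, whence $\Phi(q_-(C^-))=C^-$; symmetrically $\Psi(q_+(C^+))=C^+$. Applying the MVT to $\Phi$ on $[q_-(C^-),u]\subset q_-(C_-)$ and to $\Psi$ on $[1-v,q_+(C^+)]\subset q_+(C_+)$ --- noting $u-q_-(C^-)=\abs{q(C_-)}$ and $q_+(C^+)-(1-v)=\abs{q(C_+)}$ --- gives
\begin{equation*}
    (\renorm f)_-(c') = \Phi'(\xi_-)\,\abs{q(C_-)}/\abs{C},
    \qquad
    1-(\renorm f)_+(c') = \Psi'(\xi_+)\,\abs{q(C_+)}/\abs{C},
\end{equation*}
for some $\xi_-\in q_-(C_-)$ and $\xi_+\in q_+(C_+)$. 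Using \eqref{q} to compute $\abs{q(C_-)}=u(\abs{C_-}/c)^\alpha$ and $\abs{q(C_+)}=v(\abs{C_+}/(1-c))^\alpha$, considerable cancellation yields
\begin{equation*}
    \relc(\renorm f)^\alpha/\relv(\renorm f)
    = (v/u)\bigl(c/(1-c)\bigr)^\alpha\,\Psi'(\xi_+)/\Phi'(\xi_-).
\end{equation*}

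Second, set $\bar\psi(t):=1-\psi(1-t)$, so that $\relv(f)=\phi(u)/\bar\psi(v)$ and $\relc(f)^\alpha/\relv(f) = \bigl(c/(1-c)\bigr)^\alpha\bar\psi(v)/\phi(u)$. Expanding via the chain rule $\Phi'(\xi_-) = \deriv f^a(y)\,\deriv\phi(\xi_-)$ with $y := \phi(\xi_-)\in\phi(q_-(C_-))=f(C_-)$, and similarly $\Psi'(\xi_+) = \deriv f^b(x)\,\deriv\psi(\xi_+)$ with $x := \psi(\xi_+)\in f(C_+)$, the ratio of the two displayed expressions becomes
\begin{equation*}
    \frac{\relc(\renorm f)^\alpha/\relv(\renorm f)}{\relc(f)^\alpha/\relv(f)}
    = \frac{\phi(u)}{u}\cdot\frac{v}{\bar\psi(v)}\cdot
      \frac{\deriv\psi(\xi_+)}{\deriv\phi(\xi_-)}\cdot
      \frac{\deriv f^b(x)}{\deriv f^a(y)}.
\end{equation*}

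Third, since $\phi(0)=\bar\psi(0)=0$ the MVT yields $\phi(u)/u=\deriv\phi(\eta_1)$ and $\bar\psi(v)/v=\bar\psi'(\eta)=\deriv\psi(1-\eta_2)$ for some $\eta_1\in[0,u]$ and $1-\eta_2\in[1-v,1]$, so the prefactor reduces to
\begin{equation*}
    \kappa \;=\; \frac{\deriv\phi(\eta_1)}{\deriv\phi(\xi_-)}\cdot\frac{\deriv\psi(\xi_+)}{\deriv\psi(1-\eta_2)}.
\end{equation*}
Both $\{\eta_1,\xi_-\}\subset[0,u]$ and $\{\xi_+,1-\eta_2\}\subset[1-v,1]$ consist of pairs of points at distance at most $1$, and integrating $\abs{\nonlin\phi}\leq\delta$ gives $\abs{\log(\deriv\phi(a)/\deriv\phi(b))}\leq\delta\abs{a-b}$ (similarly for $\psi$), placing each factor of $\kappa$ in $[e^{-\delta},e^{\delta}]$ and hence $\kappa\in[e^{-2\delta},e^{2\delta}]$. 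The entire argument is really careful bookkeeping; the only care-points are recognising that periodicity of $C^\pm$ is what makes the MVT produce the orbit derivatives $\deriv f^{a,b}$, and verifying that $\xi_\pm$ land in $q_\pm(C_\pm)$ so that $\phi(\xi_-)\in f(C_-)$ and $\psi(\xi_+)\in f(C_+)$ as required.
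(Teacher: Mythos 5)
Your proof is correct and follows essentially the same route as the paper's: both express $\relc(\renorm f)$ and $\relv(\renorm f)$ in terms of the sizes of $C_\pm$ and the images $f^{a+1}(C_-)$, $f^{b+1}(C_+)$, apply the mean value theorem together with the periodicity of $\partial C$ to produce the orbit derivatives $\deriv f^b(x)$ and $\deriv f^a(y)$, and absorb the remaining diffeomorphism factors into a multiplicative constant controlled by the nonlinearity bound. The only cosmetic difference is in the bookkeeping: the paper applies the MVT separately to $f^b$ on $f(C_+)$ and to $\psi$ on $q(C_+)$, whereas you apply it once to the composite $\Psi=f_-^b\circ\psi$ and then split off $\deriv\psi(\xi_+)$ by the chain rule; both yield $\kappa$ as a ratio of $\deriv\phi$ (resp.\ $\deriv\psi$) at two points of $[0,1]$, hence $e^{-2\delta}\leq\kappa\leq e^{2\delta}$.
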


\begin{proof}
    By definition $\relc(\renorm f) = \abs{C_-}/\abs{C_+}$ and
    $\relv(\renorm f) = \abs{f^{a+1}(C_-)}/\abs{f^{b+1}(C_+)}$,
    so we are looking for an expression involving these quantities.

    By the mean-value theorem there exist $x \in f(C_+)$,
    $x_0 \in q(C_+)$, and $x_1 \in [1-v,1]$
    such that:
    \begin{enumerate*}[label=(\roman*)]
        \item $\abs{f^{b+1}(C_+)} = \deriv f^b(x) \abs{f(C_+)}$,
        \item $\abs{f(C_+)} = v \deriv\psi(x_0) ( \abs{C_+}/(1-c) )^\alpha$,
            by \eqref{q}, and
        \item $\deriv\psi(x_1) v = 1 - f_+(c)$,
            since $1 - f_+(c) = 1 - \psi(1-v) = \abs{\psi([1-v,1])}$.
    \end{enumerate*}
    Putting all of this together we get
    \begin{equation*}
        \abs{f^{b+1}(C_+)}
        = \frac{\deriv\psi(x_0)}{\deriv\psi(x_1)}
            (1 - f_+(c)) \deriv f^b(x)
            \left( \frac{\abs{C_+}}{1 - c} \right)^\alpha
            \mkern-10mu.
    \end{equation*}
    A similar argument gives an equation for $\abs{f^{a+1}(C_-)}$.
    Divide the two equations and apply Lemma~\ref{nonlinearity-prop} to finish
    the proof.
\end{proof}

The rest of this subsection is dedicated to the proof of the Flipping Lemma.
It shows that renormalization contracts the critical point very strongly toward
the boundary.
The name comes from the fact that if $c(f)$ is close to $0$, then $c(\renorm
f)$ is close to $1$.
That is, the position of the critical point ``flips'' under renormalization.
Note that it is essential that the return times are not too small; otherwise
this flipping does not occur (although we do not prove that statement here).

\begin{flipping-lemma}
    For every compact interval $P \subset \posreals$, $\delta < \infty$ and
    $\sigma \in (0,1)$ there exist $N < \infty$ and $\gamma > 0$ such that
    the following holds.
    Let $f \in \lorenz_\delta$ be $(a,b)$--renormalizable with
    $\min\{a,b\} \geq N$ and $a/b \in P$.
    If $\crit(f) < \gamma$, then
    $1 - \crit(\renorm f) < \sigma \crit(f)$.
    If $1- \crit(f) < \gamma$, then
    $\crit(\renorm f) < \sigma (1 - \crit(f))$.
\end{flipping-lemma}

\begin{remark}
    The condition on how large $a$ and~$b$ have to be is explicitly given
    by~\eqref{flipping-b}.  For $\alpha = 2$ it can be seen that this
    condition is satisfied if $\min\{a,b\} \geq 4$.  Computer experiments
    indicate that this is the optimal lower bound on $N$ in the sense that
    if $a\leq 3$ then we can choose $b$ such that the Flipping Lemma is false.
\end{remark}

The following lemma is a simple induction needed in the lemma following it.

\begin{lemma} \label{critical-preimages-away-from-c}
    Let $\alpha_n = 1/\alpha^n$. For every $\delta < \infty$ and
    $\gamma \in (0,1)$ there exists
    $\rho \in (0,1)$ such that if $f \in \lorenz_\delta$ is renormalizable and
    $c < 1 - \gamma$, then $\inv[n]f_+(c) - c \geq \rho c^{\alpha_n}$,
    for all $n \geq 1$.
\end{lemma}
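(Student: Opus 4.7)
The plan is to prove this by induction on $n$. The base case $n=1$ is exactly the statement of the Expansion Lemma (since $\alpha_1 = 1/\alpha$), so it furnishes a constant $\rho_1 > 0$, depending only on $\delta$ and $\gamma$, such that $\inv f_+(c) - c \geq \rho_1 c^{1/\alpha}$.

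For the inductive step, I would exploit the explicit power-law form of the right branch coming from~\eqref{q}: for $x \geq f_+(c)$,
\begin{equation*}
    \inv f_+(x) - c
    = (1-c) \left( \frac{\inv\psi(x) - (1-v)}{v} \right)^{1/\alpha}.
\end{equation*}
To turn this into a \emph{lower} bound (dually to the upper bound used in Lemma~\ref{critical-preimages}) I would use two standard facts: first, $\norm\psi < \delta$ together with $\int_0^1 \deriv\psi = 1$ gives $\deriv\inv\psi \geq e^{-\delta}$, so that $\inv\psi(x) - (1-v) = \inv\psi(x) - \inv\psi(f_+(c)) \geq e^{-\delta}(x - f_+(c))$; second, $f_+(c) \leq c$ since $f$ is Lorenz, so $x - f_+(c) \geq x - c$ whenever $x \geq c$. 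Combining these with $v \leq 1$ and $1 - c > \gamma$ yields the uniform bound
\begin{equation*}
    \inv f_+(x) - c \geq \gamma \, \bigl(e^{-\delta}(x - c)\bigr)^{1/\alpha},
    \qquad \forall\, x \geq c.
\end{equation*}
Substituting $x = \inv[n]f_+(c)$ and applying the inductive hypothesis then gives
\begin{equation*}
    \inv[n+1]f_+(c) - c
    \geq \gamma e^{-\delta/\alpha} \, \rho^{1/\alpha} \, c^{\alpha_{n+1}}.
\end{equation*}

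The one point that requires a little care—the ``main obstacle,'' such as it is—is closing the induction with a \emph{single} constant $\rho$ independent of $n$. The recursion $\rho_{n+1} = A \rho_n^{1/\alpha}$ with $A = \gamma e^{-\delta/\alpha}$ has the attracting fixed point $\rho_\infty = A^{\alpha/(\alpha-1)}$, so any $\rho \leq \rho_\infty$ is mapped into $[\rho,\infty)$ by the recursion. Choosing $\rho = \min\{\rho_1,\, \rho_\infty\}$ therefore propagates the bound $\inv[n]f_+(c) - c \geq \rho c^{\alpha_n}$ through the induction, completing the proof.
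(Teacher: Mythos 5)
Correct, and essentially the same argument as the paper: both prove the inductive step via the explicit power-law formula for $q_+^{-1}$, the bound $\deriv\inv\psi\geq e^{-\delta}$, $v\leq1$, $1-c>\gamma$, and $f_+(c)\leq c$. The only cosmetic difference is in closing the induction: the paper carries the constant $(\gamma e^{-\delta/\alpha})^{s_{n-1}}$ explicitly and notes $s_n<\alpha/(\alpha-1)$ is bounded, whereas you observe directly that $\rho\mapsto A\rho^{1/\alpha}$ has the attracting fixed point $A^{\alpha/(\alpha-1)}$ and take $\rho\leq\min\{\rho_1,A^{\alpha/(\alpha-1)}\}$ — the same bound, packaged a bit more cleanly.
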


\begin{proof}
    Let $s_n = 1 + \alpha_1 + \dots + \alpha_n$.  We claim that
    $\exists \rho \in (0,1)$ such that
    \begin{equation} \label{critical-preimages-away-from-c-induction}
        \inv[n]f_+(c) - c
        \geq \big( \gamma e^{-\delta/\alpha} \big)^{s_{n-1}}
            (\rho c)^{\alpha_n},
    \end{equation}
    from which the lemma follows, since
    $\rho^{\alpha_n} \uparrow 1$ and $s_n < \alpha/(\alpha - 1)$.
    The proof of~\eqref{critical-preimages-away-from-c-induction} is by
    induction.  The base case follows from the Expansion Lemma.
    Assume that \eqref{critical-preimages-away-from-c-induction} holds for
    some~$n$.  Then \eqref{q},
    $v \leq 1$, $1 - c > \gamma$ and $f_+(c) \leq c$ imply that
    \begin{multline*}
        \inv[(n+1)]f_+(c) - c
        = \frac{1 - c}{v}
            \abs*{\inv\psi \left( [f_+(c),\inv[n]f_+(c)] \right)}^{1/\alpha}
        \\
        \geq \gamma e^{-\delta/\alpha}
            \abs*{ c + \big( \gamma e^{-\delta/\alpha} \big)^{s_{n-1}}
            (\rho c)^{\alpha_n} - f_+(c) }^{1/\alpha}
        \geq
            \big( \gamma e^{-\delta/\alpha} \big)^{1 + s_{n-1}/\alpha}
            (\rho c)^{\alpha_{n+1}}.
    \end{multline*}
    This completes the proof, since $1 + s_{n-1}/\alpha = s_n$.
\end{proof}

The following lemma gives bounds on the derivative of returns to $C$ which
appear in the formula of Lemma~\ref{crit-position}.
We only need this in the proof of the Flipping Lemma.

\begin{lemma} \label{corner-return-deriv}
    Let $\alpha_n = 1 / \alpha^n$.
    For every $\delta < \infty$ there exist $K < \infty$ and $\gamma > 0$
    such that if $f \in \lorenz_\delta$ is $(a,b)$--renormalizable,
    $b > \alpha + 1$  and $c < \gamma$, then
    $\deriv f_-^b(x) \geq \inv[b] K c^{-b(1-\alpha_a)}$,
    and $\deriv f_+^a(y) \leq K^a c^{1 - \alpha_a}$
    for every $x \in \inv[b]f_-(C)$ and $y \in \inv[a]f_+(C)$.
\end{lemma}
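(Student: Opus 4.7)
The plan is to apply the chain rule along the forward orbits $y_k = f_+^k(y)$ and $x_k = f_-^k(x)$ and estimate each derivative factor. Decomposing $f_- = \phi\circ q_-$ and $f_+ = \psi\circ q_+$, the assumption $\norm{\phi},\norm{\psi} \leq \delta$ forces $\deriv\phi,\deriv\psi \in [e^{-\delta}, e^{\delta}]$ pointwise (since any diffeomorphism of $[0,1]$ attains derivative~$1$ somewhere), so these factors contribute only a uniform bounded amount per iterate. The real task is to control the $\deriv q_\pm$ contributions, which depend on the distance of the orbit points to $c$.

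For the upper bound on $\deriv f_+^a(y)$, the formula for $q_+$ gives $\deriv q_+(y_k) \leq K(y_k - c)^{\alpha-1}$, and I bound $y_k - c$ from above by running the recursion $y_k = \inv f_+(y_{k+1})$ backward from $k = a$ to $k = 0$. Mimicking the proof of Lemma~\ref{critical-preimages} yields
\begin{equation*}
    \frac{y_k - c}{1 - c} \leq \inv\psi(y_{k+1})^{1/\alpha} \leq (e^\delta y_{k+1})^{1/\alpha}.
\end{equation*}
The base case is $y_a \in C$, so $y_a - c \leq \abs{C_+} \leq K c$ by Lemma~\ref{corner-space-big-branch} (applicable since $b > \alpha+1$), and the recursion is contracting for $\alpha > 1$, so a short induction yields $y_{a-j} - c \leq K c^{\alpha_j}$ with $K$ depending only on $\delta$ and $\alpha$. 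Multiplying over $k=0,\dotsc,a-1$ and using the geometric sum $(\alpha-1)\sum_{j=1}^{a}\alpha^{-j} = 1-\alpha_a$ gives the claimed bound.

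For the lower bound on $\deriv f_-^b(x)$, since $f_- > \id$ on $(0,c)$ the orbit $x_k$ is increasing, so $c - x_k \geq c - x_{b-1}$ for all $k\leq b-1$. Solving $f_-(c-s) = x_b$ via the explicit form of $q_-$ gives
\begin{equation*}
    \frac{c - x_{b-1}}{c} = \biggl(1 - \frac{\inv\phi(x_b)}{u}\biggr)^{\!1/\alpha}.
\end{equation*}
Since $x_b \in C$ implies $\inv\phi(x_b) \leq 2e^\delta c$, and (as shown below) $u \geq K^{-1} c^{\alpha_a}$, the ratio $\inv\phi(x_b)/u \leq K c^{1-\alpha_a}$ is small for $c$ small, yielding $c - x_k \geq K^{-1} c$ uniformly. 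Substituting into $\deriv q_-(x_k) = u\alpha (c-x_k)^{\alpha-1}/c^\alpha$ gives $\deriv f_-(x_k) \geq K^{-1} u/c$ at each step, so $\deriv f_-^b(x) \geq K^{-b}(u/c)^b \geq K^{-b} c^{-b(1-\alpha_a)}$.

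The crucial inequality $u \geq K^{-1} c^{\alpha_a}$ comes from renormalizability: since $f^{a+1}(\overline{C_-}) \supset \overline{C_-} \ni c$ and $f^{a+1}|_{\overline{C_-}} = f_+^a \circ f_-$, continuity produces some $z \in \overline{C_-}$ with $f_-(z) = \inv[a] f_+(c)$. Hence $\phi(u) = \sup f_-(\overline{C_-}) \geq \inv[a] f_+(c) \geq \rho c^{\alpha_a}$ by Lemma~\ref{critical-preimages-away-from-c} (since $c < \gamma$ is certainly $< 1-\gamma'$ for $\gamma'=1/2$, say), and the bound on $u$ follows from $\phi(u) \leq e^\delta u$. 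The main obstacle is identifying this right lower bound on $u$ and realizing that it is the renormalization combinatorics (expressed through Lemma~\ref{critical-preimages-away-from-c}) that forces it. Once this is in hand, the remaining work is careful bookkeeping to ensure the constants only accumulate as $K^a$ and $K^b$, the geometric sum $\sum \alpha^{-j}$ giving the exponent $1-\alpha_a$ in both halves of the statement.
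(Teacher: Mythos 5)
Your proof is correct and proves both bounds, using the same central idea as the paper: the renormalizability-forced inequality $\phi(u)=f_-(c)\geq\inv[a]f_+(c)$ combined with Lemma~\ref{critical-preimages-away-from-c} to get $u\geq K^{-1}c^{\alpha_a}$, then the chain rule along the orbit. There are two genuine (and slightly cleaner) simplifications versus the paper's own proof. For $\deriv f_+^a$, the paper splits into $f_+^a(y)\in C_-$ (handled by applying Lemma~\ref{critical-preimages} directly, since then $f_+^i(y)\leq\inv[a-i]f_+(c)$) and $f_+^a(y)\in C_+$ (handled by a Koebe comparison with the $C_-$ case); you instead re-run the recursion of Lemma~\ref{critical-preimages} with base point $y_a\in C$ rather than $c$, which treats both cases at once and dispenses with Koebe. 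For $\deriv f_-^b$, the paper shows the product $\prod_{k<b}(1-f_-^k(x)/c)^{\alpha-1}$ has a uniform lower bound (independent of $b$) via backward attraction to $0$ under $\inv f_-$; you only bound each factor by a constant (using that the orbit is increasing so $c-x_k\geq c-x_{b-1}\geq c/2$), which costs a $K^{-b}$ factor, but that is harmlessly absorbed into the stated $\inv[b]K$. Both simplifications are sound; the paper's product bound is quantitatively sharper but the extra strength is not needed for the statement.
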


\begin{proof}
    We first prove the bound on $\deriv f_-^b$.  By \eqref{q}
    \begin{equation} \label{corner-return-deriv-Dfb}
        \deriv f_-^b(x)
        \geq \left( \frac{u}{Kc} \right)^b \;
            \prod_{i=0}^{b-1} \left( 1 - \frac{f_-^i(x)}{c} \right)^{\alpha-1}
            \mkern-28mu.
    \end{equation}
    Since $f$ is $(a,b)$--renormalizable $\phi(u) = f_-(c) \geq \inv[a]f_+(c)$,
    so $u/c \geq c^{-1+\alpha_a}/K$ by
    Lemma~\ref{critical-preimages-away-from-c}.
    Hence, it remains to show that the product on the right-hand side is
    bounded.
    To that end, let $L = [\inv f_-(c),c]$ and $R = [c,f_-(c)]$.
    By \eqref{M-over-LM}
    $\abs{\inv f_-(C_+)} / \abs{L} \leq e^\delta \abs{C_+} / \abs{R}$,
    which is small for $c < \gamma$ by Lemma~\ref{corner-space-big-branch}.
    This and the Expansion Lemma show that $f^b(t)/c \leq \rho$ for all
    $t \in \inv f_-(C)$, where $\rho \in (0,1)$ only depends on $\delta$.
    Hence, any $t \in \inv f_-(C)$ is attracted to~$0$ under iteration of $\inv
    f_-$ and the product in \eqref{corner-return-deriv-Dfb} has a uniform
    bound as claimed.

    We now prove the bound on $\deriv f_+^a$.
    Assume first that $y \in \inv[a]f_+(C_-)$, so that
    $f_+^i(y) \leq f_+^{-a+i}(c)$.
    Then we can use Lemma~\ref{critical-preimages} and \eqref{q}
    to estimate
    \begin{equation} \label{corner-return-deriv-Dfa}
        \deriv f_+^a(y)
        \leq \prod_{i=0}^{a-1}
            K \left( \inv[a+i]f_+(c) - c \right)^{\alpha-1}
        \leq \prod_{i=0}^{a-1}
            K \left(K c^{\alpha_{a-i}} \right)^{\alpha-1}
        \leq K^a c^{1-\alpha_a}.
    \end{equation}
    To show that \eqref{corner-return-deriv-Dfa} also holds for
    $y \in \inv[a]f_+(C_+)$ we argue as follows.
    The image of the monotone extension of $\inv[a] f_+(C_+) \to C_+$ is
    $[f_+(c),1]$ and this contains a $1$--scaled neighborhood of $C_+$ for
    $c < \gamma$ by Lemma~\ref{corner-space-big-branch}.
    Hence the Koebe Lemma~\ref{koebe-lemma} implies that
    $\deriv f_+^a(y) \leq K \deriv f_+^a(c)$
    for all $y \in \inv[a]f_+(C_+)$.
\end{proof}

\begin{proof}[Proof of the Flipping Lemma]
    Assume without loss of generality that $c$ is close to $0$ and let
    $N > \alpha + 1$ to begin with.
    We claim that $1 - \crit(\renorm f) \leq \sigma \crit(f)$, if
    \begin{equation} \label{flipping-cond}
        \frac{(\sigma c^2)^\alpha}{e^{2\delta}}
            \frac{\deriv f_-^b(x)}{\deriv f_+^a(y)}
        \geq 1,
        \qquad \forall x \in \inv[b]f_-(C), \forall y \in \inv[a]f_+(C).
    \end{equation}
    To prove this we will use Lemma~\ref{crit-position} and the notation
    defined there.
    From
    $0 \leq f_+(c) \leq c \leq f_-(c) \leq 1$ we get
    $c \leq \relv(f) \leq \inv{(1-c)}$ and
    \begin{equation} \label{flipping-rel}
        \frac{c^\alpha}{(1-c)^{\alpha-1}}
        \leq \frac{\relc(f)^\alpha}{\relv(f)}
        \leq \frac{c^{\alpha-1}}{(1-c)^\alpha}.
    \end{equation}
    Let $c' = \crit(\renorm f)$.  Then \eqref{flipping-rel} and
    Lemma~\ref{crit-position} gives
    \begin{equation*}
        \frac{1}{(1-c')^\alpha} \frac{1}{c^\alpha}
        \geq \frac{(c')^{\alpha-1}}{(1-c')^\alpha}
            \frac{(1-c)^{\alpha-1}}{c^\alpha}
        \geq \frac{\relc(\renorm f)^\alpha}{\relv(\renorm f)}
            \frac{\relv(f)}{\relc(f)^\alpha}
        \geq e^{-2\delta} \tau,
    \end{equation*}
    where $\tau = \inf\deriv f_-^b(x) / \deriv f_+^a(y)$
    over all $x \in \inv[b]f_-(C)$ and $y \in \inv[a]f_+(C)$.
    This proves that $(1-c')^\alpha \leq e^{2\delta} \inv\tau \inv[\alpha] c$.
    Hence, if $e^{2\delta} \inv\tau \inv[\alpha] c \leq (\sigma c)^\alpha$, then
    $1 - c' \leq \sigma c$.  This concludes the proof of~\eqref{flipping-cond}.

    We now use \eqref{flipping-cond} to prove the lemma.  Let $\beta = a/b$,
    $t \in (0,1)$ and apply Lemma~\ref{corner-return-deriv}:
    \begin{equation*}
        \tau \geq \inv[a] K \inv[b] K c^{-(b+1)(1-\alpha_a)}
        = \big( K^{-(\beta+1)} c^{-(1-t)(1-\alpha_a)} \big)^b
            c^{-(tb+1)(1-\alpha_a)},
    \end{equation*}
    where $\alpha_a = 1/\alpha^a$.
    Choose $\gamma \in (0,1)$ such that
    $K^{-(\beta+1)} \gamma^{-(1-t)(1-\alpha_a)} \geq 1$ for all $\beta \in P$.
    This is possible since $P$ is compact and the exponent of $\gamma$ is
    negative.  Hence, if $c < \gamma$, then
    $\tau > c^{-(tb+1)(1-\alpha^{-a})}$ for all~$b$.  Insert this
    into~\eqref{flipping-cond} to get
    \begin{equation*}
        \log\left\{ \frac{(\sigma c^2)^\alpha \tau}{e^{2\delta}} \right\}
        \geq \log\left\{ \frac{\sigma^\alpha}{e^{2\delta}} \right\}
            + \left( -2\alpha + (tb+1)(1-\alpha^{-a}) \right) \log \inv c.
    \end{equation*}
    The term in front of $\log\inv c$ is positive if
    \begin{equation} \label{flipping-b}
        b > \frac 1t \left( \frac{2\alpha}{1 - \alpha^{-a}} - 1 \right).
    \end{equation}
    This proves that \eqref{flipping-cond} holds if \eqref{flipping-b} is
    satisfied and $c < \gamma$, for $\gamma$ small enough.
\end{proof}

\section{Applications}
\label{applications}

In this section we apply the lemmas of the previous section.
The first result shows that the distortion of the renormalization may be
chosen arbitrarily small by increasing the return times.
Note that this holds irrespective of the position of the critical point.
It was not previously known if the distortion might blow up as the critical
point approached the boundary.

\begin{proposition}[Distortion invariance] \label{distortion-invariance}
    For every $\delta < \ubdist$ and $\eps > 0$ there exists
    $N < \infty$ such that if $f \in \lorenz_{\delta}$ is
    $(a,b)$--renormalizable and $\min\{a,b\} \geq N$, then
    $\renorm f \in \lorenz_{\eps}$.
    If in addition $c(f) \in \Delta$ then we may choose
    $\eps \leq \lambda^{\min\{a,b\}}$, where $\lambda < 1$ only depends on the
    closed interval $\Delta \subset (0,1)$ and $\delta$.
\end{proposition}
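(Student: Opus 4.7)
My plan is to bound $\|\phi'\|$; the argument for $\|\psi'\|$ is symmetric. Recall $\phi' = \zoom{\Phi}{U}$ with $\Phi = f_+^a \circ \phi$ and $U = \inv\Phi(C)$. A direct computation from the definition of affine rescaling gives that the nonlinearity scales linearly with the domain length, so
\begin{equation*}
    \|\phi'\| = |U| \cdot \sup_{x \in U} |\nonlin\Phi(x)|.
\end{equation*}
The task is therefore to show both factors can be made small by taking $\min\{a,b\}$ large.

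By Lemma~\ref{monotone-extension} (and the remark following it), $\Phi$ extends to a monotone diffeomorphism on some $U^{\text{ext}} \supset U$ with image $[\inv f_-(c), \inv f_+(c)]$. Each factor of $\Phi$ has non-positive Schwarzian (the branches $f_\pm$ because $\phi,\psi \in \sdiff$, and $\schwarz q_\pm < 0$ by direct computation), so $\schwarz\Phi \leq 0$. Lemma~\ref{bounded-geometry} ensures that for any prescribed $\tau > 0$, choosing $N$ large enough makes $\Phi(U^{\text{ext}})$ a $\tau$-scaled neighborhood of $C$. The Koebe Lemma~\ref{koebe-lemma} then furnishes both a $C^1$ distortion bound $K(\tau)$ on $\Phi|_U$ with $K(\tau)\to 1$ as $\tau\to\infty$, and its $C^2$ counterpart giving $\sup_U |\nonlin\Phi| \leq K$ with $K$ depending only on $\tau$.

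For $|U|$: the distortion bound on $\Phi|_{U^{\text{ext}}}$ yields
\begin{equation*}
    \frac{|U|}{|U^{\text{ext}}|}
    \leq K(\tau) \cdot \frac{|C|}{|\Phi(U^{\text{ext}})|}
    \leq \frac{K(\tau)}{1+2\tau},
\end{equation*}
which tends to $0$ as $\tau \to \infty$; since $|U^{\text{ext}}| \leq 1$, this gives $|U|$ arbitrarily small. Combined with the nonlinearity bound, this proves the first assertion. For the sharper bound when $c(f) \in \Delta$, Lemma~\ref{bounded-space} yields $|C| \leq K \lambda_0^{-\min\{a,b\}/\alpha}$ for some $\lambda_0 > 1$ depending only on $\Delta$ and $\delta$. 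Since $|\Phi(U^{\text{ext}})| = |[\inv f_-(c), \inv f_+(c)]|$ is bounded below by a positive constant depending on $\Delta$ and $\delta$ (from the Expansion Lemma), the distortion inequality improves to $|U| \leq K|C|$, so $\|\phi'\| \leq \lambda^{\min\{a,b\}}$ for an appropriate $\lambda < 1$.

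The main obstacle I anticipate is extracting the $C^2$ estimate $\sup_U |\nonlin\Phi| \leq K$ from the Koebe Lemma, since the classical real Koebe principle gives only the $C^1$ distortion bound. The stronger form follows from that bound together with $\schwarz\Phi \leq 0$, and is where the hypothesis $\phi,\psi \in \sdiff$ is used in an essential way. A secondary issue is ensuring $K(\tau)/(1+2\tau) \to 0$; this follows because standard real Koebe gives $K(\tau) = 1 + O(1/\tau)$ for large $\tau$, so the denominator wins.
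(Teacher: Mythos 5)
Your identity $\norm{\phi'} = \abs{U}\cdot\sup_{x\in U}\abs{\nonlin\Phi(x)}$ is correct, but the plan to make each factor small separately fails at precisely the step you flag as the main obstacle: the claimed bound $\sup_U\abs{\nonlin\Phi}\leq K(\tau)$ with $K$ depending only on $\tau$ is false. Nonlinearity has dimensions of inverse length and does \emph{not} stay bounded as $\abs U\to 0$; concretely, $\nonlin\Phi(x) = \nonlin f_+^a(\phi(x))\,\deriv\phi(x) + \nonlin\phi(x)$, and $\nonlin f_+^a$ carries the factor $\deriv f_+^{a-1}$, which grows exponentially in $a$ along the branch $U\to C$. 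What the Koebe estimate actually gives is $\sup_U\abs{\nonlin\Phi}\lesssim (\tau\abs{U})^{-1}$: the second factor diverges precisely so as to cancel $\abs U$, and only the product is controlled — and the product is $\norm{\phi'}$ itself, which is what you set out to bound. No combination of the $C^1$ distortion bound with $\schwarz\Phi\leq 0$ can turn the second factor alone into a $\tau$-dependent constant. (There is also a sign issue: Lemma~\ref{koebe-lemma} bounds $\nonlin g$ under the hypothesis $\inv g\in\sdiff$, i.e.\ $\schwarz g\geq 0$; since $\schwarz\Phi\leq 0$, the lemma applies to $\inv\Phi$, not to $\Phi$.)

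The paper's proof does not factorize. Write $\inv{(\phi')}=\zoom{\inv\Phi}{C}$; by Lemmas~\ref{monotone-extension} and~\ref{bounded-geometry} this extends, after the rescaling implicit in $\zoom{\,\cdot\,}{C}$, to a diffeomorphism of a $\tau$-scaled neighborhood of $[0,1]$. Rescaling that extended domain to $[0,1]$ gives a map to which Lemma~\ref{koebe-lemma} applies (its inverse lies in $\sdiff$), and undoing the rescaling yields $\norm{\inv{(\phi')}}\leq 2/\tau$ with the $\abs U$ and $\abs C$ factors cancelling automatically in the normalization. Then $\norm{\phi'}\leq (2/\tau)e^{2/\tau}$ follows from $\nonlin\phi' = -\big(\nonlin\inv{(\phi')}\circ\phi'\big)\cdot\deriv\phi'$ together with the derivative bound $\deriv\phi'\leq e^{2/\tau}$. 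Your $\abs U\to 0$ observation is true but not the mechanism. For the sharper second assertion the relevant input is that for $c\in\Delta$ the absolute Koebe interval $[\inv f_-(c),\inv f_+(c)]$ has length bounded below (Expansion Lemma) while $\abs C$ is exponentially small in $\min\{a,b\}$ (Lemma~\ref{bounded-space}), so $\tau$ itself is exponentially large.
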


\begin{proof}
    The renormalization is given by \eqref{Rf}.
    By Lemmas \ref{monotone-extension} and~\ref{bounded-geometry} the
    monotone extensions of $\Phi$ and $\Psi$ of~\eqref{Rf} have
    images which contain arbitrary amounts of space around $C$.  Hence the
    first statement follows from the Koebe Lemma~\ref{koebe-lemma}.
    The second statement follows if we use Lemma~\ref{bounded-space} in place
    of Lemma~\ref{bounded-geometry}.
\end{proof}

The rest of this section concerns infinitely renormalizable maps whose
return times are not too small.
Some results also require that the return time on the left is comparable
to the return time on the right.
To this end we say that $f$ is of type $\Omega(N;P)$ if:
\begin{enumerate*}
    \item $\renorm^{k-1} f$ is $(a_k,b_k)$--renormalizable,
    \item $\min\{a_k,b_k\} \geq N$, and
    \item $a_k/b_k \in P$,
\end{enumerate*}
for all $k \geq 1$.
We write $\Omega(N)$ as shorthand for $\Omega(N;\posreals)$, i.e.\ maps of
type $\Omega(N)$ need not have comparable return times on the left and on the
right.
Finally, we say that $f$ is of \key{bounded type} if
$\sup_k \max\{a_k,b_k\} < \infty$.

Given an infinitely renormalizable map $f$ we are interested in the behavior of
the \key{successive renormalizations}, $f$, $\renorm f$, $\renorm^2 f$, etc.
By historical precedence we strongly expect the successive renormalizations
to have a convergent subsequence, at least for bounded type.
However, for Lorenz maps this is not always the case and instead we see the new
phenomenon of degeneration.
We will say that the successive renormalizations of $f$ \key{degenerate} if $f$
satisfies the following theorem.
Note that it is important here that the return times are comparable on the left
and on the right.
Without this condition it is more or less possible to choose the accumulation
points of $\{c(\renorm^k f)\}_k$ (but we do not prove that statement here).

\begin{degeneration-thm}
    For every compact interval $P \subset \posreals$ and $\delta < \ubdist$
    there exist a closed interval $\Delta \subset (0,1)$ and $N < \infty$ such
    that if $\renorm^n f \in \lorenz_\delta$ is infinitely
    renormalizable of type $\Omega(N;P)$ and $\crit(\renorm^n f) \notin \Delta$
    for some $n\geq 0$,
    then $\crit(\renorm^{2k} f) \to 0$ and $\crit(\renorm^{2k+1} f) \to 1$
    (or vice versa).   The rate of convergence of $\crit(\renorm^k f)$ to
    $\{0,1\}$ is faster than $\lambda^k$ for every $\lambda \in (0,1)$.
\end{degeneration-thm}

\begin{proof}
    Since distortion does not increase under renormalization
    (Proposition~\ref{distortion-invariance}), we can apply the Flipping Lemma
    repeatedly.
    The rate of convergence is faster than exponential since the contraction
    constant $\sigma$ in the Flipping Lemma is arbitrary.
\end{proof}

When the successive renormalizations of $f$ have a convergent subsequence we
say that $f$ has \key{a priori bounds}.
The exact conditions which give a priori bounds is the content of the following
theorem.

\begin{a-priori-bounds}
    For every $\delta < \ubdist$ there exists $N < \infty$ such that if
    $f \in \lorenz_\delta$ is infinitely renormalizable of type
    $\Omega(N)$ and if there exists a closed interval $\Delta \subset (0,1)$
    such that $\crit(\renorm^k f) \in \Delta$ for all $k \geq 0$, then
    $\{\renorm^k f\}_{k\geq 0}$ is a
    relatively compact family in the $\Ck0$--topology on $\lorenz_\delta$.
\end{a-priori-bounds}

\begin{proof}
    The successive renormalizations form an equicontinuous family since both
    the position of the critical point and the distortion is invariant under
    renormalization (by assumption and Proposition~\ref{distortion-invariance},
    respectively).
    Hence the result follows from the Arzel\`a--Ascoli Theorem.
\end{proof}

From the Degeneration Theorem we know that the topological classes of
type $\Omega(N;P)$ exhibit degeneration.
The following dichotomy shows that the only other alternative is a priori
bounds.
In this sense this can be seen as a precursor to the Coexistence Theorem.
That each class contains at least one map with a priori bounds is the main
content of the next section.

\begin{theorem}[Dichotomy] \label{dichotomy}
    For every compact interval $P \subset \posreals$ and $\delta < \ubdist$
    there exists $N < \infty$ such that if $f \in \lorenz_\delta$ is infinitely
    renormalizable of type $\Omega(N;P)$, then either the successive
    renormalizations of $f$ degenerate, or $f$ has a priori bounds.
\end{theorem}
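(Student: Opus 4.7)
The plan is to combine the Distortion Invariance proposition, the Degeneration Theorem, and the A Priori Bounds into a clean case analysis: I would pick $N$ large enough that all three results apply along the entire renormalization orbit of $f$, and then split according to whether $\crit(\renorm^k f)$ ever leaves the distinguished interval $\Delta \subset (0,1)$ supplied by the Degeneration Theorem.

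First, I invoke Distortion Invariance with $\eps = \delta$ to obtain a threshold $N_1$ such that $\renorm f \in \lorenz_\delta$ whenever $f \in \lorenz_\delta$ is $(a,b)$--renormalizable with $\min\{a,b\} \geq N_1$. Inductively this gives $\renorm^k f \in \lorenz_\delta$ for every $k \geq 0$, and since the $\Omega(N_1;P)$ condition is manifestly shift-invariant, each tail $\renorm^n f$ is itself infinitely renormalizable of type $\Omega(N_1;P)$ and lies in $\lorenz_\delta$. Next, I apply the Degeneration Theorem to the data $(P,\delta)$ to extract the closed interval $\Delta \subset (0,1)$ and a threshold $N_2$, and apply A Priori Bounds at $\delta$ to extract a third threshold $N_3$. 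Setting $N := \max\{N_1,N_2,N_3\}$ gives the universal constant that the dichotomy requires.

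Now let $f \in \lorenz_\delta$ be infinitely renormalizable of type $\Omega(N;P)$. There are two mutually exclusive cases. If there exists some $n \geq 0$ with $\crit(\renorm^n f) \notin \Delta$, then applying the Degeneration Theorem to $\renorm^n f$ (whose hypotheses all hold by the preparation above) yields that $\crit(\renorm^{n+2k} f) \to 0$ and $\crit(\renorm^{n+2k+1} f) \to 1$ (or vice versa) faster than any exponential rate; that is precisely the definition of degeneration for $f$. Otherwise $\crit(\renorm^k f) \in \Delta$ for every $k \geq 0$, and then A Priori Bounds directly yields $\Ck0$--relative compactness of $\{\renorm^k f\}_{k \geq 0}$ in $\lorenz_\delta$, i.e.\ $f$ has a priori bounds.

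The only real obstacle is bookkeeping: one must ensure that the hypotheses of both the Degeneration Theorem and A Priori Bounds actually persist along the entire renormalization orbit, since both results quantify over fixed distortion $\delta$. This is handled entirely by Distortion Invariance, which moreover lets us reuse the interval $\Delta$ produced by the Degeneration Theorem verbatim inside A Priori Bounds (the latter only asks for the existence of some closed $\Delta \subset (0,1)$ containing the entire critical orbit). No new estimate is needed.
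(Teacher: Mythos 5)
Your proof is correct and is essentially a spelled-out version of the paper's proof, which simply states that the result is ``a direct consequence of the Degeneration Theorem and the conditions in the theorem on a priori bounds.'' You make explicit the bookkeeping the paper leaves implicit: that Distortion Invariance keeps the orbit inside $\lorenz_\delta$ so that both the Degeneration Theorem and A Priori Bounds can be applied with a single $N$, that a closed interval $\Delta$ can be taken from the Degeneration Theorem and reused in A Priori Bounds, and that the case split on whether $\crit(\renorm^k f)$ ever leaves $\Delta$ is exhaustive. This is careful and matches the paper's intent exactly.
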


\begin{proof}
This is a direct consequence of the Degeneration Theorem and the conditions
in the theorem on a priori bounds.
\end{proof}

We conclude this section with some dynamical properties of infinitely
renormalizable maps.
Note that the geometry of a degenerating map is very different from that of
a map with a priori bounds as evidenced by the difference in Hausdorff
dimension.
Also, note that there is no quasi-symmetric rigidity for topological classes
containing both degenerating maps and maps with a priori bounds.
To our knowledge this is the first example without quasi-symmetric rigidity
which is not contrived in some sense (e.g.\ such as considering conjugated maps
with different critical exponents or differing number of critical points).
This theorem is an improvement over previous results (see~\cite{MW14}) since it
holds independently of the behavior of the critical points of the successive
renormalizations.

\begin{theorem}[Dynamical properties] \label{dynamical-properties}
    For every $\delta < \ubdist$ there exists $N < \infty$ such that
    if $\renorm^n f \in \lorenz_\delta$ is infinitely
    renormalizable of type $\Omega(N)$ for some $n\geq 0$, then
    \begin{enumerate}
        \item $f$ has no wandering intervals and is ergodic,
        \item $f$ has a minimal Cantor attractor $\Lambda$ of measure zero,
        \item $\Lambda$ supports one or two ergodic invariant probability
            measures.
    \end{enumerate}
    Furthermore, if $f$ is of bounded type then
    \begin{enumerate}[resume]
        \item $\Lambda$ is uniquely ergodic and the measure on $\Lambda$ is
            physical,
        \item the Hausdorff dimension $\HD(\Lambda) = 0$ if the successive
            renormalizations of $f$ degenerate, otherwise
            $\HD(\Lambda) \in (0,1)$.
    \end{enumerate}
\end{theorem}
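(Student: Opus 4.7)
The plan is to reduce to the case that $f$ itself is of type $\Omega(N)$ with $N$ large, since all listed properties are invariant under finitely many applications of $\renorm$. Let $C_k$ denote the return interval of $\renorm^k f$ pulled back to the scale of $f$, so $C_0 \supset C_1 \supset \cdots$ all contain the critical point, and define the candidate attractor $\Lambda = \bigcap_k \bigcup_{j\geq 0} f^j(\overline{C_k})$. The two workhorses throughout are Lemma~\ref{bounded-geometry} (Koebe space around each $C_k$ is arbitrarily large once $N$ is large) and Proposition~\ref{distortion-invariance} (distortion of first-entry maps into $C_k$ is uniformly bounded). For (1) I would run the classical pull-back argument to rule out wandering intervals: a putative wandering interval $W$ must eventually enter every $C_k$, and pulling $W$ back along the first-entry branches, which admit the monotone extensions of Lemma~\ref{monotone-extension} with the Koebe space of Lemma~\ref{bounded-geometry}, gives uniformly bounded distortion via the Koebe Lemma (negative Schwarzian is built into $\sdiff$). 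The standard summation of the lengths of the disjoint pull-backs of $W$ then forces $\abs{W}$ to contradict $\abs{[0,1]} = 1$. Ergodicity follows in the usual way from no wandering intervals combined with bounded distortion of first-entry maps.

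For (2), the nested structure of renormalization shows $\Lambda$ is minimal and Cantor: minimality because the postcritical set is dense in $\Lambda$, and nowhere dense because $[0,1] \setminus \bigcup_j f^j(C_k)$ has positive measure at every level (it contains the intervals whose boundaries are the periodic points of periods $a_k+1$ and $b_k+1$). That $\Lambda$ has Lebesgue measure zero follows because uniform bounded distortion of first-entry maps gives $\sum_{j} \abs{f^j(\overline{C_k})} \leq K$ while each individual piece shrinks as $k \to \infty$. For (3), at each level $k$ the first-return map to $C_k$ has two monotone branches on $C_-$ and $C_+$, and there are at most two minimal invariant components for this return map (depending on whether the orbits of $C_\pm$ are combinatorially linked before returning). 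Taking weak-$*$ limits along $k\to\infty$ of the invariant probability measures of these return maps (which exist by Krylov--Bogolyubov and compactness on $C_k$) yields at most two ergodic invariant probability measures on $\Lambda$.

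For (4) and (5), the bounded type assumption forces the sequence $\{\renorm^k f\}$ to lie in a compact family in $\lorenz_\delta$ (when not degenerating, by A Priori Bounds) and bounds the combinatorial complexity uniformly. Unique ergodicity in the bounded type case follows because the bounded-return structure makes the return map on $C_k$ topologically transitive between its two branches, collapsing the potential two-component decomposition into one; the measure is physical by bounded distortion, since Lebesgue-a.e.\ point eventually enters the basin of $\Lambda$ and equidistributes. For $\HD(\Lambda)$, in the degenerating case the Degeneration Theorem gives super-exponential contraction of $\crit(\renorm^k f)$ toward $\{0,1\}$, and a Moran-type cover by the pieces $f^j(\overline{C_k})$ yields $\HD(\Lambda) = 0$; in the non-degenerating (a priori bounded) case the geometry is quasi-self-similar with ratios uniformly bounded away from $0$ and $1$, so a standard Moran estimate gives $\HD(\Lambda) \in (0,1)$, with the strict upper bound forced by the positive Lebesgue measure of the gaps. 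The main obstacle I anticipate is (1): while the Koebe and distortion machinery is in place, carefully tracking the pull-back of $W$ requires invoking the corner-case estimates of Lemmas~\ref{corner-space-big-branch}--\ref{corner-space-small-branch} to ensure that the two-sided Koebe space of Lemma~\ref{bounded-geometry} genuinely does not collapse as $\crit(\renorm^k f)$ approaches the boundary in the degenerating regime, since in that regime the naive scales shrink but the relative Koebe scale is what controls the pull-back.
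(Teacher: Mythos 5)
Your treatment of items (1), (2), and (5) follows the same architecture as the paper: reduce to $f$ itself of type $\Omega(N)$, combine Lemma~\ref{monotone-extension} with Lemma~\ref{bounded-geometry}, invoke Proposition~\ref{distortion-invariance} so that the Koebe space persists under iterated renormalization, and extract uniform bounded distortion of first-entry branches from the Koebe Lemma. The paper then proves (1) by showing that branches of the nested first-entry maps $T_n$ shrink uniformly (a $\tau$-scaled neighborhood at each level, hence $\abs J\leq(1+2\tau)^{-n}$) and cites \cite{MW14}*{Theorems~3.11--3.12} for the "shrinking branches implies no wandering intervals and ergodicity" step; your ``classical pull-back'' argument is a different packaging of the same machinery and would also work. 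Your concern at the end about the degenerating regime is well-founded but correctly resolved by Lemmas~\ref{corner-space-big-branch}--\ref{corner-space-small-branch}, which is exactly how Lemma~\ref{bounded-geometry} handles the boundary. The Moran-type dimension estimate in (5) also matches the paper (super-exponential shrinkage from the Degeneration Theorem versus at most exponential branch count from bounded type for $\HD=0$; a priori bounds from Theorem~\ref{dichotomy} and quasi-self-similarity for $\HD\in(0,1)$).

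There is, however, a real gap in your argument for item (4), and a related over-simplification in (3). You claim that bounded type makes the return map ``topologically transitive between its two branches, collapsing the potential two-component decomposition into one.'' But topological transitivity --- indeed minimality --- already follows from item (2) and holds without any bounded-type hypothesis. That is precisely why item (3) allows \emph{two} ergodic invariant measures on a \emph{single} minimal Cantor set: minimality does not bound the ergodic decomposition. So transitivity cannot be the mechanism turning two into one. What bounded type actually buys is a uniform combinatorial control (uniformly bounded incidence matrices in the induced Bratteli--Vershik/adic picture, i.e.\ linear recurrence), and unique ergodicity is a consequence of that quantitative structure --- this is the content of the techniques in \cite{GM06} (see also \cite{MW16}) that the paper cites in lieu of a detailed proof. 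Similarly, your item (3) frames the ``one or two'' bound in terms of ``minimal invariant components,'' but there is only one minimal component; the count lives in the ergodic decomposition over that single minimal set, and a Krylov--Bogolyubov weak-$*$ limit argument does not by itself explain why the number of ergodic limits is at most two.
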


\begin{remark}
    If $f$ is of unbounded type, then it may not have a physical measure
    \cite{MW16}.
\end{remark}

\begin{proof}
    Assume without loss of generality that $f \in \lorenz_\delta$ is infinitely
    renormalizable of type $\Omega(N)$.  We will begin by discussing how to
    choose $N$.

    By Proposition~\ref{distortion-invariance} we can choose $N$ such that
    $\renorm^n f \in \lorenz_\delta$, $\forall n \geq 0$.
    We will need this to apply Lemma~\ref{bounded-geometry} to
    $\renorm^n f$ for each $n$.
    Since $f$ is infinitely renormalizable there exists a sequence of closed
    intervals $\{C_n\}$ such that $C_{n-1} \supset C_n$ and the first-return
    map of $f$ to $C_n$ is affinely conjugate to a map in $\lorenz$,
    $\forall n$.
    In particular, the boundary points of $C_n$ are periodic points whose
    orbits never enter $\Int C_n$.  In other words, the intervals $C_n$ are
    \key{nice} (see~\cite{MW14}*{\S3}).
    Let $T_n: D_n \to C_n$ be the \key{first-entry} map of $f$ to $C_n$.
    That is, for every $x \in D_n = \bigcup_{i\geq0} \inv[i]f(C_n)$ define
    $T(x) = f^{t(x)}(x)$, where $t(x)$ is the smallest non-negative
    integer such that $f^{t(x)}(x) \in C_n$.  Now let $f^i: I \to C_n$ be
    any branch of $T_n$ for
    some arbitrary $n$ and let $f^i: \hat I \to \hat C_n$ be its monotone
    extension.  By lemmas \ref{bounded-geometry} and~\ref{monotone-extension}
    we can choose $N$ such that $\hat C_n \cap C_{n-1}$ contains a $1$-scaled
    neighborhood
    of $C_n$.  Since the branch and $n$ was arbitrary this holds for all
    branches and for all $n$.
    This finishes our discussion on how to choose~$N$.

    To show that $f$ has no wandering intervals it suffices to show that the
    branches of $T_n$ shrink to points uniformly as $n \to \infty$
    (cf.\ \cite{MW14}*{Theorem~3.11}).  Let
    $f^i: I \to C_{n-1}$ and $f^j: J \to C_n$ be branches of $T_{n-1}$ and
    $T_n$, respectively, and assume that $I \supset J$.  We claim that there
    exists a constant $\tau > 0$, not depending on $f$, such that $I$ contains
    a $\tau$-scaled neighborhood of $J$.  From this claim it follows that
    $\abs{J} \leq (1+2\tau)^{-n}$, i.e.\ all branches shrink to points
    uniformly in~$n$.
    We will now prove the claim.  Let $f^j: \hat J \to \hat C_n$ be the
    monotone extension of $f^j|_J$.  By the above, $\hat C_n \cap C_{n-1}$
    contains a $1$-scaled neighborhood of $C_n$ and by the Macroscopic Koebe
    Principle (see~\cite{dMvS93}*{p.~287}) it follows that $\hat J \cap I$
    contains a $\tau$-scaled neighborhood
    of $J$, for some $\tau > 0$ not depending on $f$.
    This proves the claim and hence $f$ has no wandering intervals.
    That $f$ is ergodic follows from \cite{MW14}*{Theorem~3.12}.

    We claim that the $\omega$-limit set of $c$, $\omega(c)$, is a Cantor
    attractor for $f$ and hence that the attractor is minimal.
    Since $f$ is not defined at $c$ let us emphasize that $\omega(c)$ is
    defined as the union of the $\omega$-limit sets of the two critical values
    of $f$.  The claim follows from the fact that $\abs{D_n} = 1$ (see
    \cite{MW14}*{Proposition~3.7}) and
    $\abs{C_n} \leq \abs{C_{n-1}}/3$ (since $C_{n-1}$ contains a $1$-scaled
    neighborhood of $C_n$).  That is, almost all points pass arbitrarily close
    to $c$ after sufficiently many iterates.

    To see that the Cantor attractor has zero measure note that we can cover it
    by branches of $T_n$.  Let $\Lambda_n$ be the smallest such cover.  Then
    $\abs{\Lambda_n} \leq (1+2\tau)^{-1}\abs{\Lambda_{n-1}}$ since each branch
    $J$ of $T_n$ is contained in a $\tau$-scaled neighborhood of a branch $I$
    of $T_{n-1}$ by the above.  Hence
    $\abs{\omega(c)} \leq \lim \abs{\Lambda_n} = 0$.

    Unique ergodicity can be proved with techniques from
    \cite{GM06}, see also \cite{MW16}.

    We now prove the last statement.
    If $\{\renorm^n f\}$ degenerate then the longest interval
    of $\Lambda_n$ shrinks at a faster than exponential rate in $n$, by
    the Degeneration Theorem and Proposition~\ref{distortion-invariance}.
    Since $f$ is of bounded type, the number of intervals in
    $\Lambda_n$ is at most exponential in~$n$.  It follows that
    $\HD(\Lambda) = 0$ (see e.g.\ \cite{F03}*{Proposition~4.1}).

    If $\{\renorm^n f\}$ do not degenerate, then $f$ has
    a priori bounds by Theorem~\ref{dichotomy}.  It follows from standard
    arguments that $\HD(\Lambda) \in (0,1)$
    (see e.g.\ \cite{dMvS93}*{Theorem~VI.2.1}).
\end{proof}

\section{Existence of fixed points}
\label{existence-of-fixed-points}

This whole section is dedicated to the proof of the following theorem.

\begin{theorem}[Existence of fixed points] \label{fixed-points}
    For every $\beta \in \posrationals$ there exists $N < \infty$
    such that $\renorm$ has an $(a,b)$--renormalizable fixed point, for all
    $b \geq N$ and $a/b=\beta$.
\end{theorem}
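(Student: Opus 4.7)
The plan is to obtain the fixed point via Schauder's fixed point theorem applied to $\renorm$ restricted to a carefully constructed compact convex subset $K \subset \lorenz$ consisting of $(a,b)$--renormalizable maps with the prescribed combinatorics, where $a/b = \beta$ and $b$ is taken sufficiently large. The construction splits naturally into an infinite-dimensional piece (the diffeomorphism coordinates $\phi,\psi$) and a finite-dimensional piece (the standard-family parameters $(u,v,c)$).

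First, for the infinite-dimensional piece, I would fix a small $\delta < \ubdist$ and require $\max\{\norm{\phi},\norm{\psi}\} \leq \delta$ in the definition of $K$. By Proposition~\ref{distortion-invariance}, choosing $b$ large enough ensures $\renorm f \in \lorenz_{\eps(b)}$ with $\eps(b) \ll \delta$, so this condition is preserved (and in fact contracted) by the renormalization. The set of $\phi \in \sdiff$ with $\norm\phi \leq \delta$ is convex in the Banach structure of \sref{preliminaries} and compact in $\Ck0$ by Arzel\`a--Ascoli, so this part of $K$ is harmless.

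Second, for the finite-dimensional piece, I would identify a compact convex region $Q \subset (0,1)^3$ such that every $(u,v,c) \in Q$ (together with any admissible $\phi,\psi$) gives an $(a,b)$--renormalizable map with the prescribed return times, and such that $Q$ is mapped into itself by the component $(u,v,c) \mapsto (u',v',c')$ of $\renorm$ given by \eqref{Rf}. The size and position of the return interval $C$ are controlled by Lemmas \ref{bounded-space} and \ref{bounded-geometry}, which force $|C| \to 0$ at a definite rate as $b \to \infty$; the explicit form of $u',v'$ in \eqref{Rf} then pins down their asymptotic values up to errors that vanish with $b$. The behavior of the critical coordinate $c'$ is governed by Lemma~\ref{crit-position} together with the derivative estimates of Lemma~\ref{corner-return-deriv}; the Flipping Lemma shows that $c \mapsto c'$ sends each boundary neighborhood of $(0,1)$ into the opposite neighborhood, providing the ``inward-pointing'' behavior on $\partial Q$ that I would use to choose a compact $\Delta \subset (0,1)$ invariant under $c \mapsto c'$.

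The main obstacle is precisely the construction of $Q$. The $(a,b)$--renormalizability condition is open and cuts out a curved strip in $(u,v,c)$--space, so $Q$ must be chosen inside this strip while remaining convex and being mapped to itself by a nonlinear operation whose formulas couple $u, v, c$ through the return interval. My expectation is that for $b$ large the strip is so thin and the Koebe estimates so tight that the map $(u,v,c) \mapsto (u',v',c')$ is close to an explicit model map whose self-maps are easy to exhibit, and that a small perturbation argument then produces a genuine compact convex invariant box $Q$. Continuity of $\renorm$ on $K$ follows from the formulas in \eqref{Rf} and the continuity of the Koebe estimates, so Schauder's theorem then produces a fixed point.
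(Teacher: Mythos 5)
There is a genuine gap, and it is at the heart of what makes this paper nontrivial. Your plan requires a compact convex region $Q \subset (0,1)^3$ that is mapped \emph{into itself} by $(u,v,c) \mapsto (u',v',c')$. No such region exists: as Lemma~\ref{DR2D} (and later Lemma~\ref{DR3D}) make explicit, after normalizing by $\abs{U}$ and $\abs{V}$ the Jacobian in the $(u,v)$--variables has entries of order one and strictly positive determinant; since $\abs{U},\abs{V}$ are exponentially small, $\partial_u u'$, $\partial_v v'$, etc.\ are enormous, so the $(u,v)$--directions expand, not contract. The $c$--direction is also expanding: Lemma~\ref{relc} shows $\relc \mapsto \relc'$ is approximately $\relc \mapsto \relc\,g(\relc)h(\relc)^b$ with $\abs{\deriv\log\rho} \asymp b$ on the relevant interval, and Lemma~\ref{Delta-b}\eqref{Delta-b-2} is precisely the statement that $c \in \partial\Delta_b \implies c' \notin [l+\eps, r-\eps]$, i.e.\ the map pushes the boundary \emph{outward} past $\Delta_b$. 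You have also misread the Flipping Lemma: it does not give ``inward-pointing'' behavior, it says that if $c$ is near the boundary of $(0,1)$ then $\renorm$ sends it to the opposite side and \emph{closer} to the boundary (this is the mechanism driving the Degeneration Theorem and the coexistence phenomenon). Because all three of $u,v,c$ are expanding directions (that is exactly the content of the Dimensional Discrepancy Theorem), there is no $\renorm$--invariant compact convex set of positive $(u,v,c)$--diameter, and Schauder cannot be applied.

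The paper instead uses a degree argument that has no need for a forward-invariant set. After reducing to the finite-dimensional standard family via the Homotopy Lemma (the deformation $\pi_t$ linearly contracts the nonlinearity of $\phi,\psi$ to zero; your Koebe/distortion observations are the right tool to show no fixed points appear on the boundary during this deformation), one considers the displacement map $d(f) = \rho(R(f)-f)$ on $\partial D$, which is a topological $2$--sphere by Lemma~\ref{Db}. The hemisphere property records the sign of the displacement on each face of $\partial D$: on the $u'$- and $v'$-faces it points inward, but on the $c$-faces it points \emph{outward} (left face $\mapsto H_c$, right face $\mapsto -H_c$). This sign pattern is that of a reflection, not of the identity, so $d$ is homotopic to an orientation-reversing homeomorphism and $\deg d = -1 \neq 0$; the fixed point is then detected by obstruction theory, not by convexity. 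If you want to rescue some version of your idea you would have to apply a Brouwer/degree argument in place of Schauder from the start, and you would need the precise boundary estimates of Lemmas~\ref{relc}--\ref{Db} (not just the Flipping Lemma) to pin down which cone each boundary piece is displaced into.
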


\begin{remark}
    The combinatorics here are different from those of
    \cite{MW14}*{Theorem~6.1}.
    This causes drastic changes in the dynamics: here we see three unstable
    directions (see Theorem~\ref{3d-mfd}), whereas in \cite{MW14} there appear
    only to be two.
    The extra unstable direction complicates the arguments, but on the other
    hand here we are able to prove the existence of fixed points for infinitely
    many different types of combinatorics.
\end{remark}

Fix $\beta \in \posrationals$ and assume $a/b = \beta$.
We are going to define a subset of the domain of $(a,b)$--renormalization
and then prove that this subset contains a fixed point.
The reason why we only consider a subset is that we cannot control the critical
point and critical values of the renormalization on the whole domain.
For any $b \geq 1$, closed interval $\Delta \subset (0,1)$,
$\gamma \in (0,\tfrac12)$ and $\delta \in (0,\ubdist)$, define
$Q = [1-\gamma,1]^2$ and
\begin{equation*}
    \D =
    \{ \text{$(a,b)$--renormalizable $f \in \lorenz_\delta$} \mid
    c \in \Delta,\; (u',v') \in Q,\; a/b = \beta \}.
\end{equation*}
Here $f$ is identified with $(c,u,v,\phi,\psi)$ and $\renorm f$ is
identified with $(c',u',v',\phi',\psi')$.

The next lemma gives very explicit control over how the relative critical point
moves under renormalization.
In particular, the relative critical point of the renormalization
is a function of the relative critical point, up to a
multiplicative factor close to $1$ which we can control.

\begin{lemma} \label{relc}
    Let $\relc = c/(1-c)$ and $\relc' = c'/(1-c')$ denote the relative critical
    point of $f$ and $\renorm f$, respectively, and let
    $h(\relc)^\alpha = (\alpha (1+\relc))^{1-\beta}/\relc$.
    There exists a continuously differentiable map
    $g:\posreals\to\posreals$ such that if $f \in \D$, then
    \begin{equation*}
        \relc' = \relc g(\relc) h(\relc)^b
        \kappa \cdot
        \left( 1 + O(b (\delta + (1-u) + (1-v)) + \delta' + \alpha^{-b})
            \right),
    \end{equation*}
    where
    $1 - \gamma \leq \kappa^\alpha \leq (1 - \gamma)^{-1}$
    and $\delta' = \max\{\norm{\phi'},\norm{\psi'}\}$.
\end{lemma}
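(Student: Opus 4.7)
My plan is to apply Lemma~\ref{crit-position} and explicitly evaluate the derivative ratio $\deriv f^b(x)/\deriv f^a(y)$ using the factorization $f_\pm = \phi\circ q_\pm$ (resp.\ $\psi\circ q_+$), and then reorganize the resulting expression to expose the three factors $\relc$, $h(\relc)^b$, $\kappa$ together with the correction $g(\relc)$. Lemma~\ref{crit-position} gives
\[
(\relc')^\alpha = \kappa_0\,\relv'\cdot\frac{\relc^\alpha}{\relv}\cdot\frac{\deriv f^b(x)}{\deriv f^a(y)}
\]
for some $x\in f(C_+)$, $y\in f(C_-)$ with $\kappa_0\in[e^{-2\delta},e^{2\delta}]$. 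The relative critical values satisfy $\relv = (u/v)e^{\pm 2\delta}$ and $\relv' = (u'/v')e^{\pm 2\delta'}$, as one sees by expanding $f_-(c)=\phi(u)$ and $1-f_+(c)=\psi(1)-\psi(1-v)$ and using the universal estimate $\deriv\phi,\deriv\psi\in[e^{-\delta},e^\delta]$ implied by $\norm\phi,\norm\psi<\delta$ and the normalization $\phi(0)=0$, $\phi(1)=1$. The quotient $u'/v'\in[(1-\gamma),(1-\gamma)^{-1}]$ becomes $\kappa^\alpha$ in the conclusion, while the residual factor $v/u = 1+O((1-u)+(1-v))$ is absorbed into the error.

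Next, along the orbit $x_i=f_-^i(x)$, the chain rule and the explicit formula for $\deriv q_-$ yield
\[
\deriv f_-^b(x) = \Big(\frac{\alpha u}{c}\Big)^b \prod_{i=0}^{b-1}\Big(1-\frac{x_i}{c}\Big)^{\alpha-1}\cdot e^{O(b\delta)},
\]
and an analogous formula for $\deriv f_+^a(y)$ produces the prefactor $(\alpha v/(1-c))^a$ and a polynomial product over $(y_j-c)/(1-c)$. Denote the two polynomial products by $G_-(f)$ and $G_+(f)$ respectively. Substituting into the identity above and using $a=\beta b$, the exponential prefactor reduces to $\alpha^{b-a}\,u^b(1-c)^a/(c^b v^a)$. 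A direct algebraic check using $1+\relc=1/(1-c)$ and $\relc=c/(1-c)$ gives the clean identity
\[
\relc^\alpha\cdot\alpha^{b-a}\,\frac{(1-c)^a}{c^b} = \relc^\alpha\,h(\relc)^{\alpha b},
\]
while $u^b/v^a = 1+O(b((1-u)+(1-v)))$ uniformly for $\beta$ in any compact subset of $\posreals$. Taking $\alpha$-th roots, the main term becomes $\relc\,h(\relc)^b\,\kappa$ with $\kappa^\alpha = u'/v'$.

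The remaining task is to identify $(G_-/G_+)^{1/\alpha}$ with a continuously differentiable function $g(\relc)$ up to an $O(\alpha^{-b})$ error. The orbits $x_i$, $y_j$ accumulate geometrically at the repelling boundary fixed points $0$, $1$ at rate roughly $\alpha^{-1}$ (by the Expansion Lemma and the bound $\lambda\geq\alpha e^{-2\delta}$ from the proof of Lemma~\ref{bounded-space}). In the limit $u=v=1$, $\delta=0$ the products $G_\pm$ depend only on $c$ (hence only on $\relc$); I define $g(\relc)$ as the $\alpha$-th root of this limiting ratio. Comparing $G_\pm(f)$ with the limiting infinite products is a Gronwall-style propagation of a per-step error of size $O(\delta+(1-u)+(1-v))$ through $b$ iterates, contributing $1+O(b(\delta+(1-u)+(1-v)))$, and truncating the infinite product at depth $b$ contributes an additional $O(\alpha^{-b})$ from the tail factors $(1-O(\alpha^{-j}))^{\alpha-1}$. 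Collecting all the estimates yields the stated formula, with $\delta'$ entering through $\relv'$.

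The main obstacle is the last step: establishing both the $\Ck1$ regularity of $g$ and the effectiveness of the Gronwall estimate over $b$ iterates. Both rely essentially on the hyperbolic contraction of the backward dynamics of $q_\pm$ at the boundary fixed points and on the Koebe-type distortion estimates from \sref{technical-lemmas}; without hyperbolicity the per-step error would compound to $e^{Kb}$ rather than to the required $1+O(b\cdot\text{(per-step)})$, and the infinite products would not depend smoothly on $\relc$.
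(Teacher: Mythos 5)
Your proposal follows essentially the same route as the paper: start from Lemma~\ref{crit-position}, isolate the exponential prefactors $(\alpha u/c)^b$, $(\alpha v/(1-c))^a$ to produce $h(\relc)^b$ and the $O(b((1-u)+(1-v)+\delta))$ error, compare the remaining distortion products with the full-map limiting products to define $g$, and invoke the hyperbolic contraction at the boundary fixed points to control the $O(\alpha^{-b})$ tail and the $\Ck1$ regularity of $g$. The only minor imprecision is the bookkeeping of where the factor $b$ in the error originates --- in the paper the orbit-comparison (Gronwall) term is in fact $O(\delta+(1-u))$ because the step errors decay geometrically, and the $b$-factor comes solely from the $b$-th-power prefactors, not from linear accumulation along the orbit --- but this does not affect the final estimate.
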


\begin{proof}
    The proof is about controlling the factors appearing in the formula which
    relates $\relc'$ to $\relc$ in Lemma~\ref{crit-position}.
    First we control the factor $\relv(\renorm f) / \relv(f)$, then we
    control the factor $\deriv f^b(x) / \deriv f^a(y)$.

    Note that by Lemma~\ref{nonlinearity-prop}
    \begin{equation*}
        \relv(f)
        =
        \frac{\abs*{\phi([0,u])}}{\abs*{\psi([1-v,1])}}
        =
        \frac uv \exp\{O(\delta)\}
        =
        1 + O(\delta + (1-u) + (1-v)).
    \end{equation*}
    The same argument shows that $\relv(\renorm f) = u'/v' (1 + O(\delta'))$.
    Since $u',v'\in[1-\gamma,1]$
    $\relv(\renorm f) = \kappa^\alpha \cdot (1 + O(\delta'))$,
    where $1 - \gamma \leq \kappa^\alpha \leq (1 - \gamma)^{-1}$.
    We have proved that
    \begin{equation} \label{relc-relv-term}
        \frac{\relv(\renorm f)}{\relv(f)}
        =
        \kappa^\alpha \cdot
        \left( 1 + O(\delta + (1 - u) + (1 - v) + \delta') \right).
    \end{equation}

    Next we show how to control
    $\deriv f_-^b(x)/\deriv f_+^a(y)$ by comparing with
    a full map of the standard family $\hat q$ (i.e.\ for which $u=1=v$):
    \begin{equation} \label{three-terms}
        \frac{\deriv f^b(x)}{\deriv\hat q^b(0)}
        =
        \frac{\deriv f^b(x)}{\deriv f^b(f_-^{-b} c)}
        \cdot
        \frac{\deriv f^b(f_-^{-b} c)}{\deriv \hat q^b(\hat q_-^{-b} c)}
        \cdot
        \frac{\deriv \hat q^b(\hat q_-^{-b} c)}{\deriv\hat q^b(0)}.
    \end{equation}
    The first factor is $\exp\{O(\delta' + \delta)\}$, since the diffeomorphic
    part of the renormalization is given by
    $\psi' = \zoom{\Psi}{\inv\Psi(C)}$,
    where $\Psi = f^b\circ\psi$, and $x \in f_-^{-b}(C)$.
    Consider the second factor.
    Let $c_k = f_-^{-k}(c)$, $\hat c_k = \hat q_-^{-k}(c)$ and use
    \eqref{q}
    to get
    \begin{equation*}
        \frac{\deriv f^b(f_-^{-b} c)}{\deriv \hat q^b(\hat q_-^{-b} c)}
        =
        \prod_{k=1}^b \deriv \phi(q c_k)
            \frac{\deriv q(c_k)}{\deriv \hat q(\hat c_k)}
        =
        \prod_{k=1}^b \deriv \phi(q c_k) u
            \left(1 + \frac{\hat c_k - c_k}{c - \hat c_k}\right)^{\alpha-1}
            \mkern-28mu.
    \end{equation*}
    Note that $\abs{\deriv\phi}^b = 1 + O(\delta b)$ and
    $u^b = 1 + O(b(1-u))$ so we only need to control how fast
    $l_k = \hat c_k - c_k$ shrinks.
    Use $l_{k+1} = \abs*{\hat q_-^{-1}(\hat c_k) - q_-^{-1}(\phi^{-1} c_k)}$
    to estimate
    \begin{align*}
        l_{k+1}
        &\leq
        \abs*{\hat q_-^{-1}(\hat c_k) - \hat q_-^{-1}(c_k)}
        +
        \abs*{\hat q_-^{-1}(c_k) - q_-^{-1}(c_k)}
        +
        \abs*{q_-^{-1}(c_k) - q_-^{-1}(\phi^{-1} c_k)}
        \\
        &\leq
        \deriv \hat q_-^{-1}(t_k) l_k
        +
        K c_k (1 - u)
        +
        \deriv q_-^{-1}(s_k) \abs*{c_k - \phi^{-1}(c_k)},
    \end{align*}
    for some $t_k \in [c_k,\hat c_k]$ and $s_k \in [\phi^{-1}(c_k),c_k]$.
    Because of the Expansion Lemma, $t_k,s_k \leq K \lambda^k$ for some
    $\lambda \in (0,1)$.
    Let $D_k = \deriv \hat q_-^{-1}(t_k)$, then
    $D_k \to \deriv \hat q(0)^{-1} \leq \alpha^{-1}$
    exponentially fast and
    $\deriv q_-^{-1}(s_k) \abs*{c_k - \phi^{-1}(c_k)} \leq K \delta c_k$.
    We have the relation
    \begin{equation*}
        l_{k+1} \leq D_k l_k + K\lambda^{k+1} (\delta + 1-u).
    \end{equation*}
    Choose $\lambda \in (\alpha^{-1},1)$ so that
    $\limsup D_j/\lambda < 1$.
    An induction argument shows that $l_k \leq K \lambda^k (1 - u + \delta)$.
    Thus
    \begin{equation*}
        \sum \log\left( 1 + \frac{l_k}{c-\hat c_k} \right)
        \leq
        K (\delta + 1 - u).
    \end{equation*}
    We have shown that the second factor of \eqref{three-terms} is
    $1 + O(b\delta + b(1-u))$.

    Consider the third factor of \eqref{three-terms};
    call it $G_b^-(c)$.
    We claim that $G_n^-|_{[\eps,1-\eps]}$ converges in
    $\Ck1$ as $n\to\infty$, for every $\eps > 0$.
    Let us prove the claim by showing that it holds for $\log G_n^-$.
    As in the above we can appeal to the Expansion Lemma to see that
    $\forall \eps > 0$ there exists $m_\eps < \infty$ and $K_\eps < \infty$
    such that
    \begin{equation} \label{relc-chat-eps}
        \hat c_n \leq \min\left\{ \eps,\, K_\eps \alpha^{-n} \right\},
        \quad
        \forall n \geq m_\eps,\;
        \forall c \in [\eps, 1 - \eps].
    \end{equation}
    Here we used that the contraction rate of $\hat c_n \mapsto \hat c_{n+1}$
    is
    $\deriv \hat q_-^{-1}(0) = c/\alpha \leq 1/\alpha$.
    We will need the relation
    \begin{equation} \label{relc-chat}
        \left( 1 - \frac{\hat c_{n+1}}{c} \right)^\alpha
        =
        1 - \hat c_n,
    \end{equation}
    which is a consequence of \eqref{q}.
    By definition,
    $G_n^-(c) = \deriv \hat q^n(\hat c_n)/\deriv\hat q^n(0)$, which
    combined with \eqref{relc-chat} and \eqref{q}
    gives
    \begin{equation} \label{relc-G}
        \log G_n^-(c)
        =
        \frac{\alpha - 1}{\alpha}
        \sum_{k=0}^{n-1} \log \left( 1 - \hat c_k \right).
    \end{equation}
    This sequence is uniformly convergent and hence
    $G^- = \lim G_n^- \in \Ck0$, since
    \begin{equation*}
        \sum_{k\geq m_\eps} \abs*{\log \left( 1 - \hat c_k \right)}
        \leq
        \sum_{k\geq m_\eps} \frac{\hat c_k}{1 - \hat c_k}
        \leq
        \frac{K_\eps}{(1 - \eps)} \sum_{k\geq m_\eps} \alpha^{-k}
        =
        \frac{K_\eps \alpha^{-(m_\eps - 1)}}{(\alpha - 1) (1 - \eps)}.
    \end{equation*}
    The tail can be written as $\log\{G^-(c)/G_n^-(c)\}$, so this
    also shows that
    \begin{equation*}
        \frac{G_b^-(c)}{G^-(c)}
        =
        1 + O(\alpha^{-b}),
        \quad \forall c \in \Delta.
    \end{equation*}

    Before proving that $G^-$ is $\Ck1$ let us get back to \eqref{three-terms},
    which by the above is now
    \begin{equation*}
        \frac{\deriv f^b(x)}{\deriv\hat q^b(0)}
        =
        (1 + O(\delta + \delta'))
            (1 + O(b\delta + b(1-u)))
            G^-(c)
            (1 + O(\alpha^{-b})).
    \end{equation*}
    Now do an analogous estimate for $\deriv f^a(y)$, using that $a/b = \beta$,
    to get
    \begin{equation} \label{relc-deriv-term}
        \frac{\deriv f^b(x)}{\deriv f^a(y)}
        =
        \frac{\deriv \hat q^b(0)}{\deriv \hat q^a(1)}
        \frac{G^-(c)}{G^+(c)}
        \left( 1 + O(b (\delta + (1-u) + (1-v)) + \delta' + \alpha^{-b})
            \right),
    \end{equation}
    where $G^+$ is defined analogously to $G^-$.
    Define $G(c) = G^-(c) / G^+(c)$ and
    \begin{equation*}
        H(c)
        =
        \left( \frac{\deriv \hat q^b(0)}{\deriv \hat q^a(1)} \right)^{1/b}
        =
        \alpha^{1-\beta} \frac{(1-c)^\beta}{c}.
    \end{equation*}
    Let $\tau(c) = c/(1-c)$ be the diffeomorphism which sends an absolute
    critical point to its relative critical point.
    Define $g(\relc)^\alpha = G \circ \tau^{-1}(\relc)$ and
    $h(\relc)^\alpha = H \circ \tau^{-1}(\relc)$.
    To finish the proof of the main statement
    apply Lemma~\ref{crit-position}, using equations \eqref{relc-relv-term}
    and~\eqref{relc-deriv-term} together with the definitions of $h$
    and~$g$.

    It remains to show that $g:\posreals\to\posreals$ is $\Ck1$, which we prove
    by showing that $\deriv\log G^-_n$
    converges uniformly to $\deriv\log G^-$ on $[\eps,1-\eps]$,
    $\forall\eps > 0$.
    Differentiating \eqref{relc-G} gives (nb.\ were write $\partial_c$ instead
    of $\tfrac\partial{\partial c}$)
    \begin{equation} \label{relc-DG}
        \deriv \log G_n^-(c)
        =
        -\frac{\alpha - 1}{\alpha}
        \sum_{k=0}^{n-1} (1 - \hat c_k)^{-1}
            \partial_c \hat c_k.
    \end{equation}
    Solving \eqref{relc-chat} for $\hat c_{n+1}$ and differentiating gives
    \begin{equation*}
        \partial_c \hat c_{n+1}
        =
        \partial_c \hat c_n \frac c\alpha (1 - \hat c_n)^{-1+1/\alpha}
            + 1 - (1 - \hat c_n)^{1/\alpha}.
    \end{equation*}
    By \eqref{relc-chat-eps}, for $n+1 \geq m_\eps$ and
    $\eps \leq c \leq 1-\eps$ we get
    \begin{equation*}
        \abs*{\partial_c \hat c_{n+1}}
        \leq
        \abs*{\partial_c \hat c_n} \frac 1\alpha
            (1 - \eps)^{1/\alpha}
            + \frac 1\alpha (1 - \eps)^{-1+1/\alpha} K_\eps \alpha^{-n}.
    \end{equation*}
    Here we used that $\varphi(t) = 1 - (1 - t)^{1/\alpha}$ is convex on $(0,1)$
    and $\varphi(0) = 0$, so $\varphi(t) \leq \deriv\varphi(t)t$.
    An induction argument gives
    \begin{equation*}
        \abs*{\partial_c \hat c_{m_\eps+k}}
        \leq
        \abs*{\partial_c \hat c_{m_\eps}} \alpha^{-k}
        + K'_\eps \alpha^{-(m_\eps+k)},
    \end{equation*}
    so the tail of \eqref{relc-DG} converges absolutely.
    Hence $\deriv\log G_n^-$ converges uniformly.
\end{proof}

From the previous lemma we will be able to deduce that each $\beta = a / b$
uniquely defines the asymptotic critical point of an $(a,b)$--renormalization
fixed point (as $b\to\infty$ with $a/b = \beta$).
Call this asymptotic critical point $c_\infty(\beta)$;
it is defined as the unique solution to
\begin{equation*}
    \frac x\alpha = \left( \frac{1-x}\alpha \right)^\beta
    \mkern-10mu.
\end{equation*}
We will write $c_\infty$ instead of $c_\infty(\beta)$ when there is no need to
emphasize the dependence on $\beta$.
Let $\relc_\infty = c_\infty / (1 - c_\infty)$ be the relative asymptotic
critical point.
Note that $c_\infty$ is defined by the relative asymptotic critical point
satisfying $h(\relc_\infty) = 1$, where $h$ is defined
in the previous lemma.

The next lemma serves to define the subset of the domain of renormalization
where we can control the critical point and the critical values of the
renormalization.
The main point is that within this subset the critical point of the
renormalization expands away from $c_\infty$ while staying bounded, and that the
critical values are close to $1$.
It also shows that any renormalization fixed point must have critical point
close to $c_\infty$.
Some of the conclusions of the lemma will only be needed later when we prove
the existence of unstable manifolds.

\begin{lemma} \label{Delta-b}
    For every $\eps > 0$, $l$ and $r$ satisfying
    $0 < l\pm\eps < c_\infty < r\pm\eps < 1$,
    there exist $N < \infty$,
    $0 < \gamma < \min\{l-\eps,1-(r+\eps)\}$,
    $K < \infty$ and
    closed intervals $\{\Delta_n\subset(l+\eps,r-\eps)\}_{n\geq N}$ such that
    if $f \in \Dl(b,\Delta_b,\gamma,\delta_b=1/b^2)$ and $b\geq N$, then
    the following holds:
    \begin{enumerate}
        \item $c(\renorm f) \in (l-\eps, r+\eps)$,
            \label{Delta-b-1}
        \item if $c(f) \in \partial\Delta_b$ then
            $c(\renorm f) \not\in [l+\eps,r-\eps]$,
            \label{Delta-b-2}
        \item if $c(f) = c(\renorm f)$ then
            $\dist(c(\renorm f),\partial\Delta_b) \geq \inv K\abs{\Delta_b}$,
            \label{Delta-b-3}
        \item $K^{-1} \leq b \abs{\Delta_b} \leq K$,
            \label{Delta-b-4}
        \item $u, v > 1 - \gamma$ and
            $\renorm f \in \lorenz_{\delta'_b}$ where $\delta'_b < \delta_b$,
            \label{Delta-b-5}
        \item $\renorm f$ does not have a trivial branch.
            \label{Delta-b-6}
    \end{enumerate}
\end{lemma}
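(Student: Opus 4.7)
The plan is to view the renormalization-induced map $F_b : c \mapsto c(\renorm f)$ (equivalently, via $c\mapsto\relc = c/(1-c)$, as a map on relative critical points) as a one-dimensional dynamical system using Lemma \ref{relc}, and then define $\Delta_b$ as the preimage of $[l-\eps, r+\eps]$ near $c_\infty$. The key observation is that $h(\relc_\infty) = 1$ by definition and that $h^\alpha(\relc) = \alpha^{1-\beta}(1+\relc)^{1-\beta}/\relc$ is smooth and nonconstant with $h'(\relc_\infty) \neq 0$; hence the leading factor $\relc g(\relc) h(\relc)^b \kappa$ of $F_b$ is strictly monotone near $\relc_\infty$ with derivative of order $b$. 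With $\delta_b = 1/b^2$, every error term in Lemma \ref{relc} is $O(1/b)$: the $b\delta_b$ contribution is $1/b$, while $(1-u)$, $(1-v)$, $\delta'$ and $\alpha^{-b}$ are exponentially small in $b$ --- the first two by the remark following the Expansion Lemma (forcing $f$ to be exponentially close to a full map) and $\delta'$ by the explicit part of Proposition \ref{distortion-invariance} (since $c$ will lie in a fixed compact subinterval of $(0,1)$).

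With $F_b$ monotone near $c_\infty$ with derivative of order $b$, I would define $\Delta_b$ as the unique interval around $c_\infty$ that $F_b$ maps bijectively onto $[l-\eps, r+\eps]$. Items \ref{Delta-b-1} and \ref{Delta-b-2} are then immediate from the construction. The derivative bound $|F_b'|$ of order $b$ together with the fixed image length $r-l+2\eps$ forces $b|\Delta_b|$ to be bounded above and below by positive constants, giving item \ref{Delta-b-4}; and for $b$ large, $|\Delta_b| = O(1/b)$ is much smaller than $\min(c_\infty - l - \eps, r - \eps - c_\infty)$, so $\Delta_b \subset (l+\eps, r-\eps)$.

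For item \ref{Delta-b-3}, suppose $c(f)=c(\renorm f)$, so $c_* := c(f)$ is a fixed point of $F_b$ inside $\Delta_b$. The unperturbed equation $g(\relc)h(\relc)^b\kappa = 1$ has a unique root converging to $\relc_\infty$ as $b\to\infty$; since the $\relc$-derivative of $gh^b$ is of order $b$ near $\relc_\infty$, the multiplicative perturbation $(1+O(1/b))$ in Lemma \ref{relc} displaces the root by at most $O(1/b^2)$, so $c_* \to c_\infty$ and $c_*$ is at distance at least a positive constant from $\{l-\eps, r+\eps\}$ for $b$ large. Combining
\begin{equation*}
    \dist(F_b(\partial\Delta_b), c_*)
    = |F_b(\partial\Delta_b) - F_b(c_*)|
    \leq \sup_{\Delta_b} |F_b'| \cdot \dist(\partial\Delta_b, c_*)
\end{equation*}
with $\sup|F_b'| = O(b)$ then yields $\dist(\partial\Delta_b, c_*) \geq \mathrm{const}/b$, which is of the same order as $|\Delta_b|$.

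Finally, items \ref{Delta-b-5} and \ref{Delta-b-6} follow from what is already established. The bound $u, v > 1 - \gamma$ is the exponential closeness to full maps recalled above; the inclusion $\renorm f \in \lorenz_{\delta'_b}$ with $\delta'_b < \delta_b = 1/b^2$ is the explicit case of Proposition \ref{distortion-invariance}, giving $\delta'_b \leq \lambda^b$ for some $\lambda < 1$ depending only on the compact interval $[l+\eps, r-\eps]$; and no branch of $\renorm f$ is trivial, since the critical values of $\renorm f$ are $\phi'(u')$ close to $1$ and $\psi'(1-v')$ close to $0$, both uniformly bounded away from $c(\renorm f) \in (l-\eps, r+\eps)$. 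The main obstacle is that Lemma \ref{relc} controls $F_b$ only up to a $(1+O(1/b))$ multiplicative factor pointwise and not in $\Ck1$; this is circumvented by taking $|\Delta_b|$ of order $1/b$, so that the dominant monotonicity of $h^b$ overwhelms the perturbation on the scale of $\Delta_b$.
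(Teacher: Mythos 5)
Your skeleton matches the paper's: reduce to the one-dimensional model coming from Lemma~\ref{relc}, use the strict monotonicity of $h^b$ (logarithmic derivative of order $b$), take $\delta_b = 1/b^2$ so that the error is $O(1/b)$ (hence $< \eps$ for $b$ large, with $\gamma$ shrunk so that $\kappa$ is also $\eps$-close to $1$), and define $\Delta_b$ as a preimage interval. Items \ref{Delta-b-3}--\ref{Delta-b-6} are handled in the same spirit as the paper.

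The gap is in the construction of $\Delta_b$. First, the map $F_b\colon c\mapsto c(\renorm f)$ is not a function of $c$ alone --- $c(\renorm f)$ depends on all of $(u,v,c,\phi,\psi)$ --- so ``the unique interval that $F_b$ maps bijectively onto $[l-\eps,r+\eps]$'' is not well defined. The paper works instead with the genuine one-dimensional model $\rho(\relc)=\relc\,g(\relc)\,h(\relc)^b$ and controls the actual $\relc'$ by $1-\eps\le\relc'/\rho(\relc)\le 1+\eps$. Second, and more concretely, you take the target of the preimage to be the \emph{enlarged} interval $[l-\eps,r+\eps]$. With that choice the model predicts $c'\in[l-\eps,r+\eps]$, and the $\eps$-perturbation can push the actual $c'$ anywhere in $(l-2\eps,r+2\eps)$, so item~\ref{Delta-b-1} does not follow and is certainly not ``immediate from the construction.'' The target that makes items \ref{Delta-b-1} and~\ref{Delta-b-2} both work is $[l,r]$, which sits at distance exactly $\eps$ from both $\{l-\eps,r+\eps\}$ and $\{l+\eps,r-\eps\}$: an $\eps$-displacement of the model prediction then keeps $c'$ inside $(l-\eps,r+\eps)$ for $c\in\Delta_b$, and drives $c'$ outside $[l+\eps,r-\eps]$ for $c\in\partial\Delta_b$. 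Your closing remark that the monotonicity of $h^b$ ``overwhelms the perturbation on the scale of $\Delta_b$'' does not repair this, since the perturbation acts on $c'$, not on $c$, and its size is $O(\eps)$ independently of $\abs{\Delta_b}$. With the target interval corrected to $[l,r]$, your argument goes through.
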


\begin{proof}
    Let $\tau: (0, 1) \to \posreals$ be the diffeomorphism taking a critical
    point to its relative critical point, $\tau(x) = x / (1 - x)$.
    We will work in $\posreals$ and then transfer back to $(0,1)$ via
    $\inv\tau$.
    Let $\rho(x) = x g(x) h(x)^b$, where $g$ and $h$ are defined in
    Lemma~\ref{relc}.
    Note that $\rho$ describes how the relative critical point moves under
    renormalization, up to a multiplicative factor which we need to control.

    We will now construct $\Delta_b$.
    Let $J = \tau([l,r])$ and let $J_\eps = \tau((l+\eps,r-\eps))$.
    Define $\tilde\Delta_b = \inv\rho(J) \cap J_\eps$ and
    $\Delta_b = \inv\tau(\tilde\Delta_b)$ so that
    $\Delta_b \subset (l+\eps,r-\eps)$.
    Note that $\rho$ maps the open interval $J_\eps$ over $J$ for $b$
    sufficiently large, since $h$ is strictly decreasing, $h(\relc_\infty) = 1$
    and $\relc_\infty \in J_\eps$.
    Hence we may choose $N < \infty$ such that
    $\rho$ maps $\tilde\Delta_b$ onto $J$ for $b \geq N$.
    By Lemma~\ref{relc} $\rho$ is differentiable, $g>0$ and $h>0$, so
    \begin{equation*}
        \deriv \log \rho(x)
        =
        b \cdot \deriv \log h(x) + \deriv \log g(x) + 1 / x.
    \end{equation*}
    Since $\deriv h < 0$, $J$ is compact, and $g, h \in \Ck1$, this shows that
    $-K b \leq \deriv \log \rho|_J \leq -b / K$ for $b$ sufficiently large.
    Since $\rho > 0$ is bounded on $\inv\rho(J)$, this in turn shows that
    $\exists N < \infty$ such that
    \begin{equation} \label{Delta-b-Drho}
        \deriv \rho|_J < 0
        \quad\text{and}\quad
        b/K \leq \abs*{\deriv \rho|_{\inv\rho(J)\cap J}} \leq Kb,
        \quad\forall b \geq N.
    \end{equation}
    In particular, $\rho$ is injective on $J$ so
    $\rho:\tilde\Delta_b \to J$ is a diffeomorphism and consequently
    $\tilde\Delta_b$ and $\Delta_b$ are compact intervals.
    Property~\ref{Delta-b-4} is a direct consequence of \eqref{Delta-b-Drho},
    since $\deriv\inv\tau$ has bounded distortion on compact intervals.

    We will now prove the remaining properties.
    Consider the multiplicative factor of Lemma~\ref{relc}.
    Note that $1-u$, $1-v$ and $\delta'$ are exponentially small in $b$
    since $c$ is bounded (and $\delta_b < \ubdist$);
    the Expansion Lemma gives the statement about $u$ and $v$ (see the remark
    after the lemma), whereas
    Proposition~\ref{distortion-invariance}
    gives the statement about $\delta'$.
    This proves property~\ref{Delta-b-5}, and together with Lemma~\ref{relc}
    shows that we may choose $N < \infty$ large and $\gamma > 0$ small enough
    so that for $b \geq N$:
    \begin{equation} \label{Delta-b-newc}
        1 - \eps
        \leq \frac{\relc'}{\rho(\relc)} \leq
        1 + \eps.
    \end{equation}
    It is clear that we can ensure $\gamma < \min\{l-\eps,1-(r+\eps)\}$.
    A calculation shows that
    \begin{equation} \label{Delta-b-eps}
        1 - \eps < \frac{\tau(x)}{\tau(y)} < 1+\eps
        \implies
        \abs{x - y} < \eps.
    \end{equation}
    Since $\rho:\tilde\Delta_b \to J$ is a diffeomorphism
    \eqref{Delta-b-newc} and \eqref{Delta-b-eps} imply properties
    \ref{Delta-b-1} and~\ref{Delta-b-2}.

    To prove property \ref{Delta-b-3} note first that $\rho|_J$ has a unique
    fixed point $\tau(p_b) \in \tilde\Delta_b$.
    Since $\tau(p_b) \in J_\eps$, it sits at a bounded distance away from
    $\partial J$, so $\dist(\tau(p_b),\partial J) \geq \inv K\abs{J}$.
    By \eqref{Delta-b-Drho} $\rho|_{\tilde\Delta_b}$ has bounded distortion
    (and so does $\tau|_{\Delta_b}$), so it follows that
    $\dist(p_b,\partial \Delta_b) \geq \inv K \abs{\Delta_b}$.
    It remains to prove that any critical point which is fixed under
    renormalization must be close to $p_b$ (relative to $\abs{\Delta_b}$).
    By equations \eqref{Delta-b-newc} and~\eqref{Delta-b-eps}
    $\abs{c' - c} < \eps$ for all $c \in \Delta_b$.
    In particular, any fixed point $c = c'$ must satisfy
    $\abs{c - p_b} \leq K \eps / \deriv (\inv\tau\circ\rho\circ\tau)(s)$,
    for some $s \in \Delta_b$.
    By the mean-value theorem
    $\deriv(\inv\tau\circ\rho\circ\tau)(t) = (r-l) / \abs{\Delta_b}$ for some
    $t \in \Delta_b$.
    Since $\inv\tau\circ\rho\circ\tau$ has bounded distortion on $\Delta_b$,
    $\abs{c - p_b} \leq K \eps \abs{\Delta_b} / (r-l)$.
    From this and the discussion before \eqref{Delta-b-newc} it follows that we
    may choose $N$ and $\gamma$ so that $\abs{c - p_b}/\abs{\Delta_b}$ is as
    small as we wish (independently of $b$) so property~\ref{Delta-b-3}
    follows.

    Finally, property~\ref{Delta-b-6} holds since $\gamma < c' < 1-\gamma$
    whereas $u',v' \geq 1-\gamma$.
    Explicitly, if the left branch of $\renorm f$ is trivial, then
    $u' = \phi'(c')$, but this is impossible since $\phi'(c') \to c'$ as
    $b \to \infty$.
    A similar argument holds for the right branch of $\renorm f$.
\end{proof}

For the remainder of this section we assume that $\Delta=[l,r]$, $\eps$, $N$
and $\gamma$ have been chosen such that the previous lemma holds and assume
that $b\geq N$.
Define
\begin{equation*}
    \Dl_b = \Dl(b,\Delta_b,\gamma,\delta_b).
\end{equation*}
The next lemma describes the topology of $\Dl_b$.
It relies on Lemma~\ref{DR2D}, which is at the end of this section as it
is independent of the presentation of the set $\Dl_b$.

Let $\pi_{uv}$ be the projection onto the $(u,v)$--coordinates and let
$\family_{\nu}$ denote the two-dimensional family
$(u,v) \mapsto (u,v,c,\phi,\psi)$, where $\nu = (c,\phi,\psi)$.
\begin{lemma} \label{Db}
    $\Dl_b$ is connected and
    $\pi_{uv}\circ\renorm$ maps $\Dl_b \cap \family_{\nu_0}$ diffeomorphically
    onto $Q$, for every family $\family_{\nu_0}$ intersecting $\Dl_b$.
    In particular, the intersection of $\Dl_b$ with the standard family is
    homeomorphic to $Q\times\Delta_b$.
\end{lemma}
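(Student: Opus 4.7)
The plan is to reduce everything to a slice analysis. Fix $\nu_0=(c_0,\phi_0,\psi_0)$ with $c_0 \in \Delta_b$ such that $\family_{\nu_0}$ meets $\Dl_b$, and study the map
\begin{equation*}
    \Phi_{\nu_0}: (u,v) \longmapsto (u',v') = \pi_{uv}\bigl(\renorm(u,v,c_0,\phi_0,\psi_0)\bigr),
\end{equation*}
defined on the open set $\mathcal{U}_{\nu_0}$ of those $(u,v)$ for which $(u,v,\nu_0)$ is $(a,b)$--renormalizable; then $\Dl_b\cap\family_{\nu_0}$ is by definition $\Phi_{\nu_0}^{-1}(Q)\cap \mathcal{U}_{\nu_0}$, so what has to be proved is that this preimage is connected and that $\Phi_{\nu_0}$ restricts to a diffeomorphism onto $Q$. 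The first step is to invoke Lemma~\ref{DR2D}, which provides nonsingularity of $\deriv\Phi_{\nu_0}$ on $\mathcal{U}_{\nu_0}$, so $\Phi_{\nu_0}$ is a local diffeomorphism.

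The second step, which is the main technical point, is a boundary analysis to upgrade the local diffeomorphism to a global one. I would show that $\Phi_{\nu_0}$ is proper as a map from $\Phi_{\nu_0}^{-1}(\overline Q)\cap\mathcal U_{\nu_0}$ onto $\overline Q$ by tracking what happens to $(u',v')$ when $(u,v)$ tends to $\partial\mathcal U_{\nu_0}$: when $u$ or $v$ drops so low that a branch becomes trivial or the $(a,b)$--renormalization condition fails, the corresponding critical value of $\renorm f$ collapses, forcing $(u',v')$ to leave $Q$; when $u$ or $v$ tends toward $1$ from within the renormalization domain, the Expansion Lemma and the distortion bounds of Section~\ref{technical-lemmas} (via Proposition~\ref{distortion-invariance}) show that $(u',v')$ approaches $\partial Q$. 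Together with the local diffeomorphism property, this makes $\Phi_{\nu_0}:\Phi_{\nu_0}^{-1}(Q)\cap\mathcal U_{\nu_0}\to Q$ a covering map of the simply connected square~$Q$, hence a trivial covering.

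The third step is to identify the unique sheet. By Lemma~\ref{Delta-b}, the renormalization is nondegenerate and a solution with $(u',v')\in Q$ exists, so at least one component of $\Phi_{\nu_0}^{-1}(Q)$ is nonempty, and over each point of $Q$ the different sheets are separated. Picking a reference point $(u_\star',v_\star')$ in the interior of $Q$ and using a continuity argument as we deform $\nu_0$ continuously inside the connected set of admissible $\nu$, the preimage varies continuously, which singles out a single component; this gives the claimed diffeomorphism $\Phi_{\nu_0}:\Dl_b\cap\family_{\nu_0}\to Q$. For the standard family, $\nu_0$ reduces to $c\in\Delta_b$, so the map
\begin{equation*}
    (u,v,c)\longmapsto \bigl(\Phi_{(c,\id,\id)}(u,v),\,c\bigr)
\end{equation*}
is a homeomorphism onto $Q\times\Delta_b$, giving the product structure. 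Connectedness of $\Dl_b$ then follows because it fibers over the connected base of admissible $\nu$--parameters, each fiber being diffeomorphic to~$Q$. The hard part of the argument is the boundary analysis in step two: controlling $(u',v')$ uniformly as the map degenerates requires the size estimates on $C_\pm$ and the Koebe--space control from Section~\ref{technical-lemmas}, rather than any soft argument.
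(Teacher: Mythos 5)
Your first two steps match the paper's: Lemma~\ref{DR2D} gives local invertibility, and properness (using Lemma~\ref{Delta-b}\eqref{Delta-b-6} to rule out trivial branches on $\partial\Dl_b$) upgrades each connected component of $\Phi_{\nu_0}^{-1}(Q)$ to a diffeomorphic copy of $Q$. But your third step has a genuine gap: you must show there is only \emph{one} such component, and your ``continuity argument as we deform $\nu_0$'' cannot do this. Continuity of the preimage under deformation of $\nu_0$ preserves the \emph{number} of sheets; if there were two from the start, the deformation would carry both along and you would never ``single out'' one. This is exactly the difficulty the paper confronts head-on.

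The paper's resolution is substantive. It reduces, by a transversality/persistence argument on full vertices, to showing that a two-dimensional \emph{standard} family $\family_{(c,\id,\id)}$ has a unique full vertex. That in turn is proved by showing that the two trivial-branch curves $\hat\sigma_l=\{(u,v):q_+^{-a}(c)=u\}$ and $\hat\sigma_r=\{(u,v):q_-^{-b}(c)=1-v\}$ are graphs with concave profile functions (a small induction on $g_n=-\partial_u q_-^{-n}(c)$ using $\deriv^2 q_-<0$), which forces a unique intersection and hence a unique trivial vertex; one then combines this with \cite{MdM01}*{Prop.~6.1} and an orientation/degree contradiction to rule out two full vertices in one component. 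None of this concavity or degree material appears in your sketch, and without it your covering-of-$Q$ structure could have multiple sheets. Your estimate-based intuition in step two is fine; what is missing is the combinatorial/topological input that pins down the sheet count.
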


\begin{proof}
    Assume that $\family_{\nu_0}$ intersects $\Dl_b$ and
    let $\xi_{\nu_0} = \pi_{uv}\circ\renorm|_{\family_{\nu_0}}$.
    Note that $\xi_{\nu_0}$ is a local diffeomorphism since
    $\det\deriv\xi_{\nu_0}(u,v) > 0$ by Lemma~\ref{DR2D}.
    We will show that $\xi_{\nu_0}: \inv\xi_{\nu_0}(Q) \to Q$ is a
    diffeomorphism.

    We begin by proving that $\xi_{\nu_0}:J\to Q$ is a diffeomorphism for
    any connected component $J \subset \inv\xi_{\nu_0}(Q)$.
    The boundary of $J$ is either a preimage of $\partial Q$, or a boundary
    point of $\Dl_b$.
    A point in $\partial\Dl_b$ which is not in
    $\inv\xi_{\nu_0}(\partial Q)$ must have a renormalization with a trivial
    branch.
    Hence Lemma~\ref{Delta-b}\eqref{Delta-b-6} implies that
    $\partial J \subset \inv\xi_{\nu_0}(\partial Q)$
    and from this we get that if $y \in \Int Q$ has a preimage $x \in J$,
    then $x \in \Int J$.
    But $\xi_{\nu_0}$ is a local diffeomorphism, so a neighborhood of~$x$ is
    mapped
    to a neighborhood of~$y$.
    Thus $\xi_{\nu_0}(J)$ is open (in the subspace topology on~$Q$).
    But $\xi_{\nu_0}(J)$ is also closed, since it is compact by the fact that
    $\inv\xi_{\nu_0}(Q)$ is closed.
    Thus $\xi_{\nu_0}(J) = Q$.
    Since $\xi_{\nu_0}:J \to Q$ is proper, $J$ is path-connected and $Q$ is
    simply connected, it follows that $\xi_{\nu_0}$ is also injective
    (see~\cite{H75}).

    It remains to show that $\inv\xi_{\nu_0}(Q)$ is connected.
    This will be done in two steps:
    \begin{enumerate*}
        \item showing that if $\inv\xi_{\nu_0}(Q)$ is not connected,
            then the two-dimensional standard family $\family_{(c,\id,\id)}$
            has more than one full vertex (i.e.\ a map whose renormalization
            has both branches full), and
        \item showing that any two-dimensional standard family has a unique
            full vertex.
    \end{enumerate*}

    If $\inv\xi_{\nu_0}(Q)$ is not connected,
    then there exist full vertices $p_1(\nu_0) \neq p_2(\nu_0)$ in
    $\family_{\nu_0}$,
    since every connected component of $\inv\xi_{\nu_0}$ maps onto $Q$.
    That is, $\xi_{\nu_0}(p_i(\nu_0)) = (1,1)$, for $i=1,2$.
    Both of these full vertex lies at the intersection of two smooth curves
    $\hat\chi_i^\pm \subset \family_{\nu_0}$ defined by the left
    (resp.\ right) branch of the renormalization having a full branch.
    The intersection is transversal, since $\xi_{\nu_0}$ maps $\hat\chi_i^\pm$
    diffeomorphically into $\{u=1\}$ and $\{v=1\}$, respectively.
    Hence we can perturb $\nu$ away from $\nu_0$ and get that $p_i(\nu_0)$
    persists in a neighborhood $V$ of $\nu_0$; i.e.\
    $\xi_\nu(p_i(\nu)) = (1,1)$, $\forall \nu \in V$.
    Since $\xi_\nu$ is a local diffeomorphism this transversality
    argument shows that $p_i(\nu)$ is well-defined for all $\nu$ such that
    $\family_\nu \cap \Dl_b \neq \emptyset$.
    Furthermore, the facts that $p_1(\nu_0) \neq p_2(\nu_0)$ and that $\xi_\nu$
    is a local diffeomorphism implies that $p_1(\nu) \neq p_2(\nu)$.
    In particular, the two-dimensional standard family $\family_{\nu_1}$,
    $\nu_1 = (c,\id,\id)$, has two full vertices (for every~$c$).
    This completes the first step.

    Note that if there was a unique full vertex then the above argument implies
    that $\Dl_b$ is connected.
    Hence, connectedness follows from step two which we now prove.

    In light of \cite{MdM01}*{Prop.~6.1} it suffices to show that there
    is a unique trivial vertex (i.e.\ whose renormalization has
    trivial branches) and that any connected component of the domain of
    $(a,b)$--renormalization in $\family_{\nu_1}$ has at most one full vertex.

    We will now show that there is a unique trivial vertex in
    $\family_{\nu_1}$.
    The maps $q$ with branches $q_\pm$ in $\family_{\nu_1}$ are given
    by~\eqref{q}.
    Note that $q_-$ depends on $u$ and $q_+$ depends on $v$.
    A trivial vertex lies at every intersection of the two curves
    \begin{equation} \label{Db-trivial-curves}
        \hat\sigma_l = \{ (u,v) \mid q_+^{-a}(c) = u \}
        \quad\text{and}\quad
        \hat\sigma_r = \{ (u,v) \mid q_-^{-b}(c) = 1-v \},
    \end{equation}
    inside $(c, 1) \times (1 - c, 1)$.
    Note, if $(u,v) \in \hat\sigma_l$ (resp.\ $(u,v) \in \hat\sigma_r$),
    then $\renorm q$ has a trivial left (resp.\ right) branch.
    For every $v \in [1-c,1]$ there is a unique $\sigma_l(v) \in [c,1]$ such
    that $(\sigma_l(v),v) \in \hat\sigma_l$.
    Hence $\hat\sigma_l$ is the graph of a function
    $\sigma_l:[1-c,1] \to [c,1]$ of~$v$.
    Similarly, $\hat\sigma_r$ is the graph of some function
    $\sigma_r:[c,1]\to[1-c,1]$ of~$u$.
    We claim that $\sigma_l$ and $\sigma_r$ are concave (the proof is below).
    From \eqref{Db-trivial-curves} it follows that $\sigma_l(1-c) = c$ and
    $\sigma_r(c) = 1-c$, and from \eqref{q}
    it follows that that the
    derivative of $\sigma_l$ (resp.\ $\sigma_r$) is unbounded at $1-c$
    (resp.~$c$).
    Hence $(\sigma_l(v),v) = (u,\sigma_r(u))$ has a unique solution in
    $(c,1)\times(1-c,1)$.

    In order to reach a contradiction, assume that some connected component $I$
    of the domain of $(a,b)$--renormalization has two full vertices $p_i$.
    As was noted above each $p_i$ lies at the transversal intersection of two
    curves $\hat\chi_i^\pm$.
    By \cite{MdM01}*{Prop.~6.1} $\hat\chi_i^\pm$ are graphs over the
    diagonal $\{u=v\}$ locally around $p_i$ and we may assume that
    $\hat\chi_i^+$
    lies over $\hat\chi_i^-$ (as graphs over the diagonal) at both $p_1$
    and~$p_2$.
    By assigning an orientation to $\partial I$ we see that this implies that
    $\hat\chi_1^+$ (resp.\ $\hat\chi_1^-$) goes toward (resp.\ away from) $p_1$
    and the opposite at $p_2$ (or vice versa).
    Hence $p_1$ and $p_2$ have neighborhoods which are mapped with different
    orientation to neighborhoods of $(1,1)$ by $\xi_{\nu_1}$.
    This contradicts the fact that $\deriv\xi_{\nu_1}$ preserves orientation.

    We will now prove that $\sigma_r$ is concave (the proof for $\sigma_l$
    is similar).
    By \eqref{Db-trivial-curves} $\sigma_r(u) = 1 - q_-^{-b}(c)$.
    Let $c_n = q_-^{-n}(c)$ and
    $g_n(u) = -\partial_u c_n = \deriv \sigma_r(u)$.
    We will show that $\deriv g_n < 0$, $\forall n > 1$.
    Write $g_n(u) = -\partial_u(q_-^{-1}(c_{n-1}))$ and differentiate to get
    \begin{equation*}
        \deriv q_-(c_n) g_n(u) = g_{n-1}(u) + \hat q(c_n),
    \end{equation*}
    where $\hat q_-$ is the full branch (i.e.\ $q_-$ with $u=1$).
    Apply $\partial_u$ to both sides to get
    \begin{equation*}
        q_-(c_n) \deriv g_n(u)
        =
        \deriv g_{n-1}(u) - g_n(u)
            \left(2 \deriv \hat q_-(c_n) - \deriv^2 q_-(c_n) g_n(u)\right).
    \end{equation*}
    From $g_0 = 0$ we get $g_1 > 0$ and $\deriv g_1 < 0$, so
    $g_n > 0$ and $\deriv g_n < 0$ by induction, $\forall n\geq 1$
    (use $q_- > 0$, $\deriv q_- > 0$, and $\deriv^2 q_- < 0$).

    Finally, the map $(u,v,c) \mapsto (\pi_{uv}\circ\renorm(u,v,c,\id,\id), c)$
    is injective and continuous, hence it is a homeomorphism.
\end{proof}

We claim that $\Dl_b$ contains a fixed point of $\renorm$.
To prove this we will reduce the fixed point problem from an
infinite-dimensional space
to a three-dimensional space using the following result:

\begin{homotopy-lemma}[\cite{MW14}*{Appendix A}]
    Let $Y$ be a normal topological space, let $X \subset Y$ be a closed
    subset and let $h: X \times [0,1] \to Y$ be a homotopy between $f$
    and~$g$.  If every extension of $g|_{\partial X}$ to $X$ has a fixed point
    and if $h(x,t) \neq x$ for all $x \in \partial X$ and $t \in [0,1]$,
    then $f$ has a fixed point.
\end{homotopy-lemma}

The homotopy we will use is
\begin{equation*}
    \pi_t(u,v,c,\phi,\psi)
    =
    (u, v, c, (1-t)\phi, (1-t)\psi).
\end{equation*}
Recall that $(1-t)\phi$ means scaling the nonlinearity of $\phi$ and
note that $\pi_1$ is the projection to the standard family.
From the Homotopy Lemma and the next two lemmas it immediately follows that
$\renorm$ has a fixed point in $\Dl_b$.

\begin{lemma}
    $\pi_t\circ\renorm f \neq f$, $\forall f \in \partial\Dl_b$,
    $\forall t\in[0,1]$.
\end{lemma}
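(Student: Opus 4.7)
The plan is to write the fixed-point equation $\pi_t\circ\renorm f = f$ componentwise and show that on each piece of $\partial\Dl_b$ at least one component disagrees for every $t\in[0,1]$. Since $\pi_t$ rescales only the nonlinearity of the diffeomorphic parts, the equation amounts to $u=u'$, $v=v'$, $c=c'$, $\phi=(1-t)\phi'$ and $\psi=(1-t)\psi'$, where primes denote the components of $\renorm f$. The set $\Dl_b$ is defined by three closed constraints (on $c$, on $(u',v')$, and on the distortion norms), so its boundary decomposes into three (possibly overlapping) pieces, which I treat one at a time.

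First I would handle $c\in\partial\Delta_b$: here Lemma~\ref{Delta-b}\eqref{Delta-b-2} gives $c'\notin[l+\eps,r-\eps]\supset\Delta_b\ni c$, so $c\neq c'$. Next, on the piece $(u',v')\in\partial Q$, I split into the ``small'' side ($u'$ or $v'$ equals $1-\gamma$) and the ``full'' side ($u'$ or $v'$ equals $1$). On the small side, Lemma~\ref{Delta-b}\eqref{Delta-b-5} gives $u,v>1-\gamma$, contradicting $u=u'$ or $v=v'$. On the full side, one needs the independent observation that $u=1$ is incompatible with $(a,b)$--renormalizability: $u=1$ forces $f_-(c)=\phi(1)=1$, and since $f_+(1)=1$ an induction on $k$ shows $1\in\overline{f^k(C_-)}$ for every $k\geq 1$, contradicting $f^{a+1}(C_-)\subset C\subsetneq[0,1]$. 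The symmetric observation ($f_-(0)=0$) rules out $v=1$. Hence $u,v<1$ strictly on $\Dl_b$, so $u=u'=1$ or $v=v'=1$ cannot occur either.

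On the third piece $\max\{\norm{\phi},\norm{\psi}\}=\delta_b$, say $\norm{\phi}=\delta_b$, Lemma~\ref{Delta-b}\eqref{Delta-b-5} gives $\norm{\phi'}\leq\delta_b'<\delta_b$, and homogeneity of the nonlinearity norm gives $\norm{(1-t)\phi'}=(1-t)\norm{\phi'}<\delta_b=\norm{\phi}$, contradicting $\phi=(1-t)\phi'$ for every $t\in[0,1]$.

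The main obstacle is to verify that these three pieces really exhaust $\partial\Dl_b$, i.e.\ that no extra boundary slips in from the combinatorial boundary of $(a,b)$--renormalizability. Any such combinatorial degeneration would force $\renorm f$ to acquire either a trivial or a full branch; Lemma~\ref{Delta-b}\eqref{Delta-b-6} rules out trivial branches on $\Dl_b$, and the full-branch limit $u'=1$ or $v'=1$ already lies in $\partial Q$. So all combinatorial boundary is absorbed into the second piece, and the non-routine full-branch step is precisely the orbit-tracking argument given above.
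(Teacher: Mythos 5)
Your decomposition of $\partial\Dl_b$ into cases and the use of Lemma~\ref{Delta-b} to rule out each piece is the same strategy as the paper's proof. You additionally settle two points the paper leaves implicit: the faces $u'=1-\gamma$, $v'=1-\gamma$ of $\partial Q$ (ruled out via Lemma~\ref{Delta-b}(\ref{Delta-b-5})), and a justification that $u,v<1$ for any $(a,b)$--renormalizable map, which the paper merely asserts. The one thing to tighten is that last argument: after concluding $1\in\overline{f^{a+1}(C_-)}\subset C$, the hypothesis $C\subsetneq[0,1]$ alone does not give a contradiction, since $C$ can perfectly well have $1$ as its right endpoint while $\partial_-C>0$. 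What you actually learn is $\partial_+C=1$, hence $C_+=(c,1]$; the contradiction then comes from the \emph{right} branch, because $f_+(1)=1$ gives $1\in f^1(C_+)$, violating $f^1(C_+)<c$ in the definition of $(a,b)$--renormalizability. With that adjustment the argument is complete.
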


\begin{proof}
    We will consider each boundary piece of $\Dl_b$ one at a time:
    \begin{enumerate}
        \item If $\renorm f$ has a full branch, then $u'=1$ or $v'=1$, but both
            $u<1$ and $v<1$ for a renormalizable map.
        \item If $\norm{\phi} = \delta_b$, then
            $\norm{\phi'} < \delta_b$
            by Lemma~\ref{Delta-b}(\ref{Delta-b-5}), hence
            $\norm{(1-t)\phi'} < \delta_b$ (and similarly for $\norm\psi$).
        \item If $c \in \partial\Delta_b$, then $c' \notin \Delta_b$ by
            Lemma~\ref{Delta-b}(\ref{Delta-b-2}).
        \item If $\renorm f$ has a trivial branch, then $f \notin \Dl_b$ by
            Lemma~\ref{Delta-b}(\ref{Delta-b-6}).  Hence this boundary
            piece is not part of $\partial\Dl_b$.
    \end{enumerate}
    This shows that $\pi_t\circ\renorm$ has no fixed point in $\partial\Dl_b$.
\end{proof}

\begin{lemma}
    Let $D$ be the intersection of $\Dl_b$ with the standard family and
    let $R = \pi_1\circ\renorm|_D$.
    Every extension of $R|_{\partial D}$ to $D$ has a fixed point.
\end{lemma}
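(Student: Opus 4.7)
The plan is to parameterize $D$ as a $3$-cube via Lemma~\ref{Db} and apply the Poincaré--Miranda theorem to any continuous extension of $R|_{\partial D}$.

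By Lemma~\ref{Db}, the map $\eta:(u,v,c,\id,\id)\mapsto(\pi_{uv}\circ\renorm(u,v,c,\id,\id),c)$ is a homeomorphism from $D$ onto $Q\times\Delta_b=[1-\gamma,1]^2\times\Delta_b$. I rescale this product linearly onto $[0,1]^3$ and let $\Phi:[0,1]^3\to D$ be the resulting homeomorphism. In cube coordinates $(s_1,s_2,s_3)$, the first two correspond to $u'$ and $v'$ (the first two coordinates of $\renorm f$) and the third to $c$, with $s_i=0$ at the lower end of each range.

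Given any continuous extension $F$ of $R|_{\partial D}$, set $\Xi=F\circ\Phi-\Phi:[0,1]^3\to\reals^3$; a zero of $\Xi$ is a fixed point of $F$ in $D$. Since $\Phi$ takes $\partial[0,1]^3$ onto $\partial D$ and $F|_{\partial D}=R|_{\partial D}$, writing $\Phi(s)=(u,v,c)$ and $R(\Phi(s))=(u',v',c')$ the boundary values of $\Xi$ are $(u'-u,\,v'-v,\,c'-c)$. I then verify Poincaré--Miranda's sign hypotheses face by face. On $\{s_1=0\}$ (so $u'=1-\gamma$), Lemma~\ref{Delta-b}\eqref{Delta-b-5} gives $u>1-\gamma$, so $\Xi_1<0$; on $\{s_1=1\}$ (so $u'=1$), $u\leq 1$, so $\Xi_1\geq 0$. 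The same signs hold for $\Xi_2$ on the $s_2$-faces. On $\{s_3=0\}$ (where $c=\min\Delta_b$), the strictly decreasing diffeomorphism $\rho:\tilde\Delta_b\to J$ from the proof of Lemma~\ref{Delta-b} sends the left endpoint of $\tilde\Delta_b$ to the right endpoint of $J=\tau([l,r])$; combined with $\Delta_b\subset(l+\eps,r-\eps)$ and the tight control on $\relc'/\rho(\relc)$ established there, this gives $c'>r-\eps>\max\Delta_b\geq c$, so $\Xi_3>0$. Symmetrically, $\Xi_3<0$ on $\{s_3=1\}$.

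Since each $\Xi_i$ has opposite (weak) signs on opposite $s_i$-faces, the Poincaré--Miranda theorem yields a zero of $\Xi$ in $[0,1]^3$, hence a fixed point of $F$ in $D$. The main step requiring care is the orientation of the flipping $c\mapsto c'$ on the two $c$-faces of $\partial D$, which follows from the monotonicity of $\rho$ established in the proof of Lemma~\ref{Delta-b}; the $u,v$-face inequalities then come directly from Lemma~\ref{Delta-b}\eqref{Delta-b-5}.
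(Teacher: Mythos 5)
Your proof is correct and rests on exactly the same geometric input as the paper's: the cube parameterization of $D$ supplied by Lemma~\ref{Db}, the face-by-face constraints on the displacement $(u'-u,\,v'-v,\,c'-c)$ coming from Lemma~\ref{Delta-b} items \eqref{Delta-b-2}, \eqref{Delta-b-5}, \eqref{Delta-b-6} (including the fact, visible in the proof of Lemma~\ref{Delta-b}, that the flip of $c$ at the two $c$-faces has opposite orientation because $\rho$ is a decreasing diffeomorphism $\tilde\Delta_b\to J$ and $\Delta_b\subset(l+\eps,r-\eps)$). The paper packages these observations as the ``hemisphere property'' of a displacement map $d=\rho\circ(R-\id):\partial D\to\sphere^2$ and then proves $\deg d\neq 0$ by an explicit geodesic homotopy to an orientation-reversing homeomorphism. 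You sidestep the degree-theoretic machinery by feeding the same sign conditions directly into the Poincar\'e--Miranda theorem on the cube. That is a genuinely cleaner route: it avoids defining the reference map $i$ and verifying that the geodesic interpolation is well defined on each face, at the cost of being slightly less flexible (the paper's version does not require the boundary to be a cube, only a topological sphere with a face decomposition). For this lemma, where Lemma~\ref{Db} hands you the cube structure outright, your approach is the more economical one and establishes the same conclusion.
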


\begin{proof}
    Let $\rho: \reals^3\setminus\{0\} \to \sphere^2$ be the radial projection
    $\rho(x) = x/\norm{x}$ and define the \key{displacement map} of~$R$,
    $d:\partial D \to \sphere^2$, by $d(f) = \rho(R(f) - f)$.
    Let $H_u \subset \sphere^2$ denote the open hemisphere around
    the positive $u$-axis and let $-H_u$ denote its antipodal set.
    Define $H_v$, $H_c$ and their antipodal sets similarly.
    By Lemma~\ref{Delta-b}, the $d$--image of each boundary piece of
    $\partial D$ lies in a hemisphere:
    if $u' = 1$, then $u < u' = 1$, so $d(f) \in H_u$;
    if $u' = 1-\gamma$, then $u' < u$, so $d(f) \in -H_u$,
    if $c = \partial^-\Delta_b$, then $c' > c$, so $d(f) \in H_c$;
    if $c = \partial^+\Delta_b$, then $c' < c$, so $d(f) \in -H_c$;
    and similarly, $v' = 1 \implies d(f) \in H_v$,
    $v' = 1-\gamma \implies d(f) \in -H_v$.
    We call what we have just described the \key{hemisphere property} of the
    displacement map.

    By Lemma~\ref{Db} $\partial D$ is a topological sphere so there is a
    well defined notion of the degree of $d$.
    We claim that $\deg d \neq 0$.
    Note that this claim would finish the proof, since if an extension of
    $R|_{\partial  D}$ to $D$ did not have a fixed point then its
    displacement map would extend to all of $D$ and consequently have degree
    zero.
    We now prove the claim by constructing a homotopy from $d$ to a
    homeomorphism.
    By Lemma~\ref{Db} there is a homeomorphism $h: K \to \partial D$,
    where $K = \partial[-1,1]^3$.
    We choose $h$ so that it takes faces of $K$
    onto boundary pieces of $\partial D$ in the following manner:
    the bottom face of $K$ is mapped onto $\{v'=1-\gamma\}\cap D$,
    the top face is mapped onto $\{v'=1\}\cap D$, the left face is mapped to
    $\{c=\partial^-\Delta_b\}\cap D$, and so on.
    To be clear, here we choose the right-handed coordinate system for $K$ so
    that ``right'' corresponds to increasing $c$ and ``up'' corresponds to
    increasing $v$.
    Define $i_K: K\to K$ by $i_K(x,y,z) = (-x,y,z)$.
    This is an orientation reversing homeomorphism, so $\deg i_K = -1$.
    Define $i: \partial D \to \sphere^2$ by
    $i = \rho \circ i_K \circ \inv h$.
    For each (closed) boundary piece $F$ of $\partial D$ define a homotopy
    from $d$ to $i$ by interpolating along geodesics.
    This is a well-defined continuous operation since $d(F)$ and $i(F)$ lie
    in the same open hemisphere because of the hemisphere property and because
    of how we defined $i$.
    In fact, this defines a homotopy between $d$ and $i$ all of $\partial D$
    since each piece $F$ is closed and their union is all of $\partial D$.
    Thus $\deg d = \deg i \neq 0$.
\end{proof}

This concludes the proof of Theorem~\ref{fixed-points}.
We end with the fundamental lemma which was used earlier.
Here $\partial_x = \tfrac{\partial}{\partial x}$ and the remaining
notation is defined in~\eqref{Rf}.

\begin{lemma} \label{DR2D}
    For every $\delta < \ubdist$, $\eps > 0$ and compact intervals
    $\Delta \subset (0,1)$ and $P \subset \reals^+$
    there exists $N < \infty$ such that if $f \in \lorenz_\delta$ is
    $(a,b)$--renormalizable, $\min\{a,b\} \geq N$ and $a/b \in P$, then
    \begin{equation} \label{uv-partials}
        \begin{aligned}
        \abs{U} \partial_u u' &=
            1 + \frac{1-u'}{\lambda'_0 - 1} + o(\inv[n]b),
            &
        \abs{V} \partial_v u' &=
            -\frac{u'}{\lambda'_1 - 1} + o(\inv[n]b),
            \\
        \abs{U} \partial_u v' &=
            -\frac{v'}{\lambda'_0 - 1} + o(\inv[n]b),
            &
        \abs{V} \partial_v v' &=
            1 + \frac{1-v'}{\lambda'_1 - 1} + o(\inv[n]b),
        \end{aligned}
    \end{equation}
    for every $n > 0$.  Furthermore
    \begin{equation} \label{det2d}
        \abs{U}\abs{V}
        \det
        \begin{pmatrix}
            \partial_u u' & \partial_v u' \\
            \partial_u v' & \partial_v v'
        \end{pmatrix}
        >
        \frac{\alpha(\alpha-1)}{(\alpha - c')(\alpha - 1 + c')}
        - \eps.
    \end{equation}
\end{lemma}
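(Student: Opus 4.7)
The plan is to derive the partial-derivative formulas~\eqref{uv-partials} by implicit differentiation of~\eqref{Rf}, then combine them algebraically to establish~\eqref{det2d}.

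\textbf{Setup.} Let $p_0 = \partial_- C$ and $p_1 = \partial_+ C$ be the periodic points of $f$ bounding the return interval, with multipliers $\lambda'_0 = \deriv(\Phi\circ q_-)(p_0)$ and $\lambda'_1 = \deriv(\Psi\circ q_+)(p_1)$. Set $\alpha_0 = q_-(p_0) = \inv\Phi(p_0)$, $u^* = \inv\Phi(p_1)$, $\alpha_1 = q_+(p_1) = \inv\Psi(p_1)$, $v^* = \inv\Psi(p_0)$. Then $U = [\alpha_0, u^*]$, $V = [v^*, \alpha_1]$, and~\eqref{Rf} becomes $u' = (u - \alpha_0)/\abs{U}$ and $v' = (\alpha_1 + v - 1)/\abs{V}$. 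The crucial structural fact is that $\Phi = f_+^a\circ\phi$ is independent of $u$ and $\Psi = f_-^b\circ\psi$ is independent of $v$; hence the fixed-point equations $\Phi(\alpha_0) = p_0$, $\Phi(u^*) = p_1$, $\Psi(\alpha_1) = p_1$, $\Psi(v^*) = p_0$ determine the joint $u,v$-dependence of all six points.

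\textbf{Diagonal partials.} Differentiating $u'$ gives $\abs{U}\partial_u u' = 1 - (1-u')\partial_u \alpha_0 - u'\partial_u u^*$. Implicit differentiation of $\Phi(q_-(p_0;u)) = p_0$ (using $u$-independence of $\Phi$) yields $\partial_u\alpha_0 = -(\alpha_0/u)/(\lambda'_0 - 1)$. By the Expansion Lemma and Lemma~\ref{bounded-space}, $c - p_0 \leq \abs{C}$ is super-polynomially small in $\min\{a,b\}$, hence $\alpha_0/u = 1 - ((c - p_0)/c)^\alpha = 1 + o(b^{-n})$, producing the main term $(1-u')/(\lambda'_0 - 1)$. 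The residual $u'\partial_u u^* = u'\partial_u p_1/\deriv\Phi(u^*)$ is super-polynomially small because a chain-rule expansion of $\partial_u\Psi$ bounds $\abs{\partial_u p_1} = O(b/\lambda'_1)$, while $\deriv\Phi(u^*)$ is exponentially large in $a$ (its factors $\deriv f_+$ are evaluated on the backward orbit of $p_1$ under $f_+$, which accumulates exponentially near the repelling fixed point $1$ whose multiplier is bounded below by the Expansion Lemma).

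\textbf{Off-diagonal partials.} For $\abs{V}\partial_v u'$, the analogous manipulation gives $\abs{U}\partial_v u' = -(1-u')\partial_v\alpha_0 - u'\partial_v u^*$, so $\abs{V}\partial_v u' = -(\abs{V}/\abs{U})[(1-u')\partial_v\alpha_0 + u'\partial_v u^*]$. By bounded distortion (Proposition~\ref{distortion-invariance}) applied to $\Phi|_U$ and $\Psi|_V$ (both with image $C$), $\abs{U} = \abs{C}/\deriv\Phi(u^*) \cdot (1 + o(b^{-n}))$ and $\abs{V} = \abs{C}/\deriv\Psi(\alpha_1)\cdot(1 + o(b^{-n}))$, so $\abs{V}/\abs{U} = \deriv\Phi(u^*)/\deriv\Psi(\alpha_1)\cdot(1 + o(b^{-n}))$. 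Combining with $\partial_v u^* = (\partial_v p_1 - \partial_v\Phi(u^*))/\deriv\Phi(u^*)$ and $\partial_v p_1 = -\deriv\Psi(\alpha_1)\partial_v q_+(p_1)/(\lambda'_1 - 1)$, the $\deriv\Phi(u^*)$ factors cancel and one finds $(\abs{V}/\abs{U})\partial_v u^* = -((\alpha_1 - 1)/v)/(\lambda'_1 - 1) + o(b^{-n})$; since $(\alpha_1-1)/v = -(1 - ((p_1 - c)/(1-c))^\alpha) = -1 + o(b^{-n})$, this yields the main term $-u'/(\lambda'_1 - 1)$. The $\partial_v\alpha_0$ contribution is super-polynomially small by an estimate parallel to the one for $\partial_u u^*$, and the remaining two partials follow by symmetry.

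\textbf{Determinant and main obstacle.} Substituting~\eqref{uv-partials} and simplifying,
\[
    \abs{U}\abs{V}\det = \frac{\lambda'_0\lambda'_1 - \lambda'_0 v' - \lambda'_1 u'}{(\lambda'_0 - 1)(\lambda'_1 - 1)} + o(b^{-n}).
\]
By Proposition~\ref{distortion-invariance} the distortion $\delta'$ of $\renorm f$ is $o(b^{-n})$, so $\lambda'_0 = u'\alpha/c'\cdot(1+o(1))$ and $\lambda'_1 = v'\alpha/(1-c')\cdot(1+o(1))$ (computed directly from~\eqref{q} as derivatives of the branches of $\renorm f$ at their repelling fixed points). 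Substituting reduces the main term to $\alpha(\alpha-1)u'v'/((\alpha u' - c')(\alpha v' - 1 + c'))$. The function $g(u,v) = uv/((\alpha u - c)(\alpha v - 1 + c))$ is strictly decreasing in each variable on the relevant domain (since $\partial_u \log g = -c/(u(\alpha u - c)) < 0$), hence $g(u',v') \geq g(1,1) = 1/((\alpha - c')(\alpha - 1 + c'))$ for $u', v' \leq 1$, giving~\eqref{det2d} for $N$ sufficiently large. The main obstacle is the off-diagonal case: both $\partial_v p_1$ and $\deriv\Phi(u^*)$ diverge, and extracting the finite main term $-u'/(\lambda'_1 - 1)$ requires simultaneously invoking bounded distortion of $\Phi$ and $\Psi$ together with the precise asymptotic $\alpha_1 \to 1 - v$, and maintaining uniformity in $c \in \Delta$ and $a/b \in P$ throughout.
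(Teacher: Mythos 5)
Your proposal is correct and follows essentially the same strategy as the paper's proof: implicitly differentiate the fixed-point equations for the periodic endpoints of $C$, exploit that $\Phi$ is $u$-independent and $\Psi$ is $v$-independent to get closed-form expressions for the diagonal partials and the cancellation of $\deriv\Phi$ in the off-diagonal ones, kill the residual terms using the exponential growth of $\deriv\Phi,\deriv\Psi$ together with the $O(a),O(b)$ bounds on $\partial_v\Phi,\partial_u\Psi$, and then substitute the limiting values $\lambda'_0\to\alpha u'/c'$, $\lambda'_1\to\alpha v'/(1-c')$ into the determinant. The only departure is that you make the final inequality explicit via the monotonicity of $g(u,v)=uv/((\alpha u - c)(\alpha v - 1 + c))$, which is a correct expansion of the paper's terse appeal to $u',v'\leq 1$.
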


\begin{proof}
    We will begin by proving \eqref{uv-partials} for $\partial_u u'$ and
    $\partial_v u'$.  The proofs for $\partial_u v'$ and $\partial_v v'$ are
    identical so we will not include them here.

    By \eqref{Rf} $u' = \abs{q_-(C_-)}/\abs{U}$, hence
    $\partial\abs{U} u' + \abs{U}\partial u' = \partial\abs{q_-(C_-)}$.
    Let $C = [l,r]$.
    From
    $\abs{U} = \inv\Phi(r) - \inv\Phi(l)$,
    $\abs{q_-(C_-)} = q_-(c) - q_-(l) = u - \inv\Phi(l)$ we get
    \begin{equation} \label{du'}
        \abs{U} \partial u' =
            \partial u - (1 - u') \partial(\inv\Phi l)
            - u' \partial(\inv\Phi r).
    \end{equation}
    In order to use \eqref{du'} we need to evaluate terms like
    $\partial(\inv T x)$, where $T$ is a first-entry map to $C$ and
    $y = T(x)$ is a fixed point of $f^k$ for some $k$.  By the chain rule,
    $\partial y = \partial T(x) + \deriv T(x) \partial x$, we get
    \begin{equation} \label{dinvT}
        \partial(\inv T y) =
            \frac{\partial y - \partial T(\inv T y)}{\deriv T(\inv T y)}.
    \end{equation}
    The chain rule applied to $y = f^k(y)$ gives
    $\partial y = \partial f^k(y) + \deriv f^k(y) \partial y$.  In particular,
    $l = f^{a+1}(l) = \Phi\circ q_-(l)$ and
    $r = f^{b+1}(r) = \Psi\circ q_+(r)$, so
    \begin{equation} \label{dlr}
        \partial l =
            -\frac{\partial (\Phi\circ q_-)(l)}{\lambda'_0 - 1}
            \quad\text{and}\quad
        \partial r =
            -\frac{\partial (\Psi\circ q_+)(r)}{\lambda'_1 - 1}.
    \end{equation}
    Affine conjugation preserves derivatives, so
    $\deriv f^{a+1}(l) = \lambda'_0$ and
    $\deriv f^{b+1}(r) = \lambda'_1$.

    We will now calculate $\partial_u u'$.  Equations \eqref{dinvT},
    \eqref{dlr}, \eqref{q}
    and $\partial_u\Phi = 0$ gives
    \begin{equation} \label{du_Phi_l}
        \partial_u(\inv\Phi l) =
            \frac{\partial_u l}{\deriv\Phi(\inv\Phi l)} =
            -\frac{\partial_u q_-(l)}{\lambda'_0 - 1} =
            -\frac{1}{\lambda'_0 - 1}
            + \frac{(\abs{C_-}/c)^\alpha}{\lambda'_0 - 1}
    \end{equation}
    and, using $\partial_u q_+ = 0$ as well,
    \begin{equation} \label{du_Phi_r}
        \partial_u(\inv\Phi r) =
            \frac{\partial_u r}{\deriv\Phi(\inv\Phi r)} =
            -\frac{\partial_u \Psi(\inv\Psi r)}%
                {\deriv\Phi(\inv\Phi r) (\lambda'_1 - 1)}.
    \end{equation}
    By Proposition~\ref{distortion-invariance} we can choose $N$ such that
    $\renorm f \in \lorenz_{\delta'}$ with $\delta' < \ubdist$ and hence
    there exists a $\lambda > 1$ (not depending on~$f$) such that
    $\lambda'_0, \lambda'_1 \geq \lambda$ for $N$ large enough (see the remark
    after Lemma~\ref{bounded-space}).  This, together with
    Lemma~\ref{bounded-space}, $c \in \Delta$ and $a/b \in P$ shows that the
    second term in the right-hand side of \eqref{du_Phi_l} is of the order
    $o(b^{-n})$, $\forall n>0$.  We claim that $\abs{\partial_u\Psi}$ is of the
    order $O(b)$, the proof of which is provided below.  On the other hand,
    $\abs{\deriv\Phi}$ grows exponentially in $a$ by the Expansion Lemma and
    Proposition~\ref{distortion-invariance}.  Since $a/b \in P$, this implies
    that \eqref{du_Phi_r} is of the order $o(b^{-n})$, $\forall n>0$.  This
    finishes the proof for $\partial_u u'$.

    We will now calculate $\partial_v u'$.  Equations \eqref{dinvT},
    \eqref{dlr}, \eqref{q}
    and $\partial_v q_- = 0$ gives
    \begin{align}
        \partial_v(\inv\Phi l) &=
            \frac{\partial_v l - \partial_v\Phi(\inv\Phi l)}%
                {\deriv\Phi(\inv\Phi l)} =
            -\frac{\partial_v\Phi(\inv\Phi l)(1 + (\lambda'_0-1)^{-1})}%
                {\deriv\Phi(\inv\Phi l)},
        \label{dv_Phi_l}
        \\
        \partial_v(\inv\Phi r) &=
            \frac{\partial_v r - \partial_v\Phi(\inv\Phi r)}%
                {\deriv\Phi(\inv\Phi r)} =
            -\frac{\deriv\Psi(q_+ r) \partial_v q_+(r)}%
                {\deriv\Phi(\inv\Phi r)(\lambda'_1-1)}
            -\frac{\partial_v\Phi(\inv\Phi r)}%
                {\deriv\Phi(\inv\Phi r)}.
        \label{dv_Phi_r}
    \end{align}
    We claim that $\abs{\partial_v\Phi}$ is of the order $O(a)$, the proof of
    which is postponed.  As in the
    above, $\abs{\deriv\Phi}$ grows exponentially with $b$.  Hence
    \eqref{dv_Phi_l} and the second term of \eqref{dv_Phi_r} are both of the
    order $o(b^{-n})$.  The first term of \eqref{dv_Phi_r} can be estimated as
    follows.  There exists $x \in V$, $y \in U$ such that
    $\deriv\Psi(x) = \abs{C}/\abs{V}$ and $\deriv\Phi(y) = \abs{C}/\abs{U}$.
    Let $\rho_x = \deriv\Psi(q_+ r)/\deriv\Psi(x)$ and
    $\rho_y = \deriv\Phi(\inv\Phi r) / \deriv\Phi(y)$.  Then, using
    \eqref{q},
    we get
    \begin{equation*}
        -\frac{\deriv\Psi(q_+ r) \partial_v q_+(r)}%
            {\deriv\Phi(\inv\Phi r)(\lambda'_1-1)} =
        \frac{\abs{U}}{\abs{V}} \frac{\rho_x}{\rho_y}
        \frac{1-(\abs{C_+}/(1-c))^\alpha}{\lambda'_1-1}.
    \end{equation*}
    But $\rho_x/\rho_y$ is of the order $O(e^{\delta'})$ by
    Lemma~\ref{nonlinearity-prop} and $\delta'$ is
    exponentially small in $b$.  So is $\abs{C_+}$ by Lemma~\ref{bounded-space}
    and hence
    $\partial_v(\inv\Phi r) = ((\lambda'_1-1)^{-1} + o(b^{-n})) \abs U/\abs V$.
    Now put all the above together in \eqref{du'} to finish the proof for
    $\partial_v u'$.

    We will now prove the claim that $\abs{\partial_u\Psi} \leq O(b)$;
    that $\abs{\partial_v\Phi} \leq O(a)$ follows from an identical
    argument.  An induction argument using the chain rule shows that
    \begin{equation*}
        \partial_u\Psi(x) =
            \sum_{i=0}^{b-1} \deriv f^{b-1-i}(f^{i+1}\circ\psi(x))
            \partial_u f_-(f^i\circ\psi(x)).
    \end{equation*}
    By \eqref{q},
    $\partial_u f_-(t) = \deriv\phi(q_- t) q_-(t)/u$.  By
    the mean-value theorem there exists $x_i \leq f^{b-1-i}(x)$ such that
    $\deriv f^{b-1-i}(x_i) f^{i+1}\circ\psi(x) = \Psi(x)$.  Since
    $\abs{\deriv\phi} \leq e^\delta$ and $\abs{\Psi} \leq 1$,
    \begin{equation*}
        \partial_u\Psi(x) \leq
            e^\delta \sum_{i=0}^{b-1}
            \frac{\deriv f^{b-1-i}(f^{i+1}\circ\psi(x))}{\deriv f^{b-1-i}(x_i)}
            \frac{q_-(f^i\circ\psi(x))}{f^{i+1}\circ\psi(x)}.
    \end{equation*}
    By the Expansion Lemma $0$ is an attracting fixed point of
    $\inv f_-$ with multiplier bounded by $\inv\lambda$.
    Hence each term of the sum is bounded for every $b$, so
    $\abs{\partial_u\Psi} \leq K b$.

    Let us finish by proving \eqref{det2d}.  The right-hand sides of
    \eqref{uv-partials} all have bounded modulus since
    $\lambda_0',\lambda_1' \geq \lambda > 1$ (see above).
    A calculation using this fact and the expressions \eqref{uv-partials} for
    the partial derivatives gives
    \begin{equation*}
        \abs{U}\abs{V} \det
        \begin{pmatrix}
            \partial_u u' & \partial_v u' \\
            \partial_u v' & \partial_v v'
        \end{pmatrix}
        =
        \frac{\lambda_0'\lambda_1' - v'\lambda_0' - u'\lambda_1'}%
            {(\lambda_0' - 1)(\lambda_1' - 1)} + o(\inv[n]b).
    \end{equation*}
    By Proposition~\ref{distortion-invariance} the diffeomorphic parts of the
    renormalization tend to identity maps as $N \to \infty$.  Hence,
    \eqref{q}
    implies that
    $\lambda_0' \to \alpha u'/c'$ and
    $\lambda_1' \to \alpha v'/(1 - c')$
    as $N \to \infty$.  This, together with $u',v' \leq 1$, gives
    \begin{equation*}
        \frac{\lambda_0'\lambda_1' - v'\lambda_0' - u'\lambda_1'}%
            {(\lambda_0' - 1)(\lambda_1' - 1)}
        \to
        \frac{\alpha(\alpha - 1)}%
            {(\alpha - \frac{c'}{u'})(\alpha - \frac{1 - c'}{v'})}
        \geq
        \frac{\alpha(\alpha - 1)}{(\alpha - c')(\alpha - 1 + c')}.
        \qedhere
    \end{equation*}
\end{proof}

\section{Internal structures and renormalization}
\label{the-internal-structure-of-renormalization}

A major problem with the classical renormalization operator in
\sref{preliminaries} is that it is \emph{not} differentiable \cite{FMP06}.
The solution to this problem is to avoid composition \cite{M98}, which
leads to the space of internal structures and a new renormalization operator.
This new renormalization operator is always differentiable (see
Theorem~\ref{R-diff}).

In this section we define internal structures and the renormalization operator
acting on internal structures.
An internal structure of a diffeomorphism $\phi$ is a sequence of
diffeomorphisms which when composed yields $\phi$
(\sref{internal-structure-of-diffeomorphisms}).
In the definition of the renormalization operator, composition of
diffeomorphisms is replaced by juxtaposition of internal structures;
this defines the new renormalization operator
(\sref{the-internal-structure-of-renormalization_renormalization}).
Limits of renormalization have internal structures whose diffeomorphisms are
pure.
Properties of pure internal structures are discussed in \sref{pure-maps}.
See \cite{MW14}*{\S7--8} for more details (nb.\ internal structures are also
called ``decompositions'').

\subsection{Internal structure of diffeomorphisms}
\label{internal-structure-of-diffeomorphisms}

A set $T$ is called a \key{time set} iff it is countable, has a total order and
an associated function $\depth: T \to \nats$ with finite level sets.
When emphasis is required,
we write $(T,\depth)$ and call it a time set.
Elements of $T$ are called \key{times}; those at depth 0 are called
\key{top level} times.

Let $\diff^3 \subset \diff^2$ be the subspace of $\Ck3$--diffeomorphisms with
norm $\norm{\phi} = \max\{\abs{\nonlin\phi},\abs{\deriv\nonlin\phi}\}$.
Let $\ldiff T$ be the space of all $\bar\phi: T \to \diff^3$ with the
$\ell^1$--norm $\norm{\bar\phi} = \sum\norm{\bar\phi(\tau)}$ and the linear
structure induced by $\diff^3$.
If $\bar\phi \in \ldiff T$, then the diffeomorphisms of $\bar\phi$ can be
composed in the order of $T$ to obtain a $\phi \in \diff^2$.
Explicitly, let $T_k = \{ \tau\in T\mid \depth\tau \leq k \}$ so that
$T_k = \{\tau_1,\dotsc,\tau_{n(k)}\}$ and $\tau_i < \tau_j$ if $i < j$, and let
$\phi_k = \bar\phi(\tau_{n(k)})\circ\dots\circ\bar\phi(\tau_1)$.
Then $\phi = \lim_k \phi_k$ and we write $\phi = \compose\bar\phi$.
It is essential here that the derivative of the nonlinearity is bounded.
We say that $\bar\phi$ is the \key{internal structure} of $\phi$ and we call
\begin{equation*}
    \compose:\ldiff T \to \diff^2
\end{equation*}
the \key{composition operator}.
The composition operator is Lipschitz on bounded subsets of $\ldiff T$.
See \cite{M98}*{Proposition~4.1} and \cite{MW14}*{Proposition~7.5} for proofs
of the above statements.
Any subset $S \subset T$ is also a time set, so the restriction of
$\bar\phi \in \ldiff T$ to $S$ can be composed.
Such \key{partial composition} is denoted $\compose\bar\phi|_S$.

\subsection{Renormalization}
\label{the-internal-structure-of-renormalization_renormalization}

Fix two time sets $(T_\pm,\depth_\pm)$.
Let $u,v,c \in \reals$, $\bar\phi_\pm \in \ldiff{T_\pm}$,
$\phi_\pm = \compose\bar\phi_\pm$, and assume that
$f = (u,v,c,\phi_\pm)$ is $(a,b)$--renormalizable.
Denote $\renorm f = (u',v',c',\phi'_\pm)$.
We will define time sets $(T'_\pm,\depth'_\pm)$, and
internal structures $\bar\phi'_\pm \in \ldiff{T'_\pm}$ such that
$\phi'_\pm = \compose\bar\phi_\pm'$.
The operator sending $(u,v,c,\bar\phi_\pm)$ to $(u',v',c',\bar\phi'_\pm)$
defines the
\key{renormalization operator acting on internal structures}.
Below $\square = \pm$ when $k=0$, else $\square = \mp$,
$n_- = a$ and $n_+ = b$.
\begin{enumerate}
    \item Define $T'_\pm$ to be the disjoint unions
        $(T_\mp \sqcup \{n_\pm\}) \sqcup \dots \sqcup (T_\mp \sqcup \{1\}) \sqcup T_\pm$,
        i.e.\
        \begin{equation*}
            T'_\pm
            =
            \big\{ (\tau,k) \mid \text{%
                $\tau \in T_{\Sm\square}$ if $k$ even,
                $\tau = (k+1)/2$ if $k$ odd}
                \big\}_{k=0}^{2n_\pm},
        \end{equation*}
        with order $(\tau_1, k_1) < (\tau_2, k_2)$ iff $k_1 < k_2$, or
        $k_1 = k_2$ and $\tau_1 < \tau_2$.
        Define
        \begin{equation*}
            \depth'_\pm(k,2k-1) = 0,
            \quad
            \depth'_\pm(\tau,2k) = 1 + \depth_{\Sm\square}(\tau).
        \end{equation*}
    \item Define the intervals $\{\bar C_\pm(\tau')\}_{\tau'}$ as follows:
        pull back $C$ under $f$
        \begin{equation*}
            \bar C_\pm(k, 2k - 1) = f_\mp^{-(n_\pm + 1 - k)}(C),
        \end{equation*}
        then pull back these top level intervals through the internal
        structures
        \begin{equation*}
            \bar C_\pm(\tau,2k)
            =
            \big(\!\compose\!\bar\phi_{\Sm\square}|_{\{t\geq\tau\}}
                \big)^{-1}\big(\bar C_\pm(k+1,2k+1)\big).
        \end{equation*}
        For $k=n_\pm$ we use the convention $\bar C_\pm(n_\pm+1,2n_\pm+1) = C$.
    \item Define $\bar\phi'_\pm$ by restricting to $\bar C_\pm$
        and rescaling (see \sref{preliminaries} for notation):
        \begin{align*}
            \bar\phi'_\pm(k,2k-1)
            &=
            \zoom*{q_\mp}{\bar C_\pm(k,2k-1)},
            \\
            \bar\phi'_\pm(\tau,2k)
            &=
            \zoom*{\bar\phi_{\Sm\square}(\tau)}{\bar C_\pm(\tau,2k)}.
        \end{align*}
\end{enumerate}

Note that the diffeomorphisms at the top level of $\bar\phi'_\pm$ are
rescaled restrictions of the standard family; we call them
pure maps.
An internal structure with pure maps at all times is called a
\key{pure internal structure}.
Renormalization fixed points and their unstable manifolds have pure internal
structures.

\subsection{Pure maps}
\label{pure-maps}

A map of the form $\zoom*{q}{I}$ is called a \key{pure map} iff
$c \not\in I$.
The map $s \mapsto \zeta_s$ is a bijection between $\reals$ and the set of pure
maps, where
\begin{equation} \label{pure-map}
    \zeta_s(x) = \frac{\left( 1 + (e^{s/(\alpha-1)} - 1)x \right)^\alpha - 1}%
        {e^{\alpha s/(\alpha-1)} - 1},\quad x \in [0,1].
\end{equation}
Note that $\zeta_0(x) = x$, $\zeta_s(x) \to x^\alpha$ as $s \to \infty$,
and $\zeta_s(x) \to 1 - (1 - x)^\alpha$ as $s \to -\infty$.
The parameter $s$ is called the \key{signed distortion} of $\zeta_s$, since
the distortion of $\zeta_s$ is~$\abs{s}$.
A calculation using \eqref{pure-map} shows that distortion is related to
nonlinearity by
\begin{equation} \label{pure-map-nonlinearity}
    \abs{\nonlin\zeta_s}
    =
    (\alpha-1) \big(e^{\abs s / (\alpha-1)} - 1\big)
    \quad\text{and}\quad
    (\alpha-1)\abs{\deriv\nonlin\zeta_s}
    =
    \abs{\nonlin\zeta_s}^2.
\end{equation}
Pure maps are invariant under rescaling; i.e.\ if $\phi$ is a pure map, then so
is $\zoom{\phi}{I}$.

Given a time set $T$, define $\lpure{T}$ to be the space of sequences
$\bar s: T \to \reals$ with the $\ell^1$--norm.
Let $\bar s \mapsto \bar\phi$ be defined by
$\bar\phi(\tau) = \zeta_{\bar s(\tau)}$.
Then $\bar\phi \in \ldiff T$ and $\forall S < \infty$ $\exists K < \infty$ such
that if $\norm{\bar s} \leq S$ then
$\norm{\bar s} \leq \norm{\bar\phi} \leq K\norm{\bar s}$,
by \eqref{pure-map-nonlinearity}.
We call $\bar\phi$ the \key{pure internal structure associated} with $\bar s$.
Since $\bar\phi \in \ldiff T$, $\phi = \compose\bar\phi \in \diff^2$, but in
fact $\phi \in \sdiff$ (it is even analytic), see \cite{MW14}*{\S7}.
We will write $\compose\bar s$ to mean the same thing as $\compose\bar\phi$.

Fix two time sets $T_\pm$.
Given $u,v,c \in \reals$, $\bar s_\pm \in \lpure{T_\pm}$, let $\bar\phi_\pm$ be
the internal structures associated with $\bar s_\pm$ and let
$\phi_\pm = \compose \bar\phi_\pm$.
We say that $(u,v,c,\bar s_\pm)$ is a \key{pure Lorenz map} iff
$(u,v,c,\phi_\pm) \in \lorenz$.
The space of pure Lorenz maps is a subset of the Banach space
$\reals^3 \times \lpure{T_-}\times\lpure{T_+}$.
The renormalization operator on pure Lorenz maps is defined in the obvious way:
let $(u',v',c',\bar\phi'_\pm)$ be the renormalization of $(u,v,c,\bar\phi_\pm)$
and define $\renorm(u,v,c,\bar s_\pm) = (u',v',c',\bar s'_\pm)$ in such a way
that $\bar\phi'_\pm$ are the internal structures associated with $\bar s'_\pm$.
This makes sense because pure maps are invariant under rescaling, so
$\bar\phi'_\pm$ are pure internal structures since $\bar\phi_\pm$ are.

\subsection{Results}

Here we collect results related to internal structures that will be needed in
later sections.
The main results are that the renormalization operator is differentiable
and that the norm of the internal structure of the renormalization is
small for $a$, $b$ large.
This subsection can be skipped on a first read-through.

\begin{theorem} \label{R-diff}
    $\renorm$ acting on internal structures is differentiable.
\end{theorem}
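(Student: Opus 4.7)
The plan is to exploit the defining feature of the new renormalization: each entry of $\bar\phi'_\pm$ depends on at most one entry of the input internal structure. Indeed, $\bar\phi'_\pm(\tau,2k)$ is an affine rescaling of the single diffeomorphism $\bar\phi_{\Sm\square}(\tau)$, and $\bar\phi'_\pm(k,2k-1)$ is an affine rescaling of a single branch of the standard family. This avoids the non-differentiable composition $\phi\circ\psi$ at the level of outputs and reduces the problem to (i) differentiability of the intervals $\bar C_\pm$ and the numerical data $(u',v',c')$ in the input, and (ii) differentiability of the rescaling operation.

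First, I would verify that the intervals and the numerical data depend differentiably on $(u,v,c,\bar\phi_\pm)$. The return interval $C$ is cut out by the periodic points $\partial_\pm C$ of $f^{a+1}$ and $f^{b+1}$, which are hyperbolic (multipliers $\lambda'_0,\lambda'_1>1$ by Lemma~\ref{DR2D} and the Expansion Lemma); hence the implicit function theorem makes $C$ a differentiable function of the data, using that $\compose: \ldiff{T_\pm} \to \diff^2$ is Lipschitz and that $f$ is obtained from $\phi_\pm = \compose\bar\phi_\pm$ by finitely many $C^2$-compositions. The top-level intervals $\bar C_\pm(k,2k-1)=f_\mp^{-(n_\pm+1-k)}(C)$ inherit differentiability, and the deeper $\bar C_\pm(\tau,2k)$ are obtained by pulling back through the partial composition $\compose\bar\phi_{\Sm\square}|_{\{t\geq \tau\}}$, which again is differentiable in $\bar\phi_{\Sm\square}$. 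The formulas \eqref{Rf} then yield differentiability of $(u',v',c')$. For the individual entries, the ``zoom'' map $(\psi,I)\mapsto \zoom{\psi}{I}$ is smooth in $\ldiff{\{\tau\}}$-norm when $\psi \in \diff^3$ and $I$ varies in a compact family of subintervals of $[0,1]$, since
\begin{equation*}
    \nonlin \zoom{\psi}{I}(x) = \abs{I}\cdot \nonlin\psi\bigl(h_I(x)\bigr),
\end{equation*}
and a similar smooth formula holds for its derivative. Combining these facts gives a candidate Fréchet derivative of $\renorm$ on each coordinate separately.

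The main obstacle is to assemble these pointwise derivatives into a bounded linear operator on the $\ell^1$-spaces, i.e.\ to show uniform $\ell^1$-estimates for the derivative. A perturbation $\delta\bar\phi_{\Sm\square}(\tau_0)$ at a single time $\tau_0$ affects infinitely many entries of $\bar\phi'_\pm$ by changing the intervals $\bar C_\pm(\tau,2k)$ for $\tau<\tau_0$ and for $k$ above the appearance of $\tau_0$; a perturbation of $(u,v,c)$ affects every output entry. The $\ell^1$-control comes from the geometric decay of $\abs{\bar C_\pm(\cdot,2k)}$ as $k$ increases, which is guaranteed by the Expansion Lemma together with the Koebe-space lemmas of \sref{technical-lemmas}: rescaling a fixed diffeomorphism through a shorter and shorter interval contracts its nonlinearity norm by a factor proportional to $\abs{\bar C_\pm}$. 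Summing these contractions over $T'_\pm$ converts the pointwise derivatives into a bounded operator $D\renorm$ between the two $\ell^1$-spaces, and the Lipschitz bound on $\compose$ yields a quadratic remainder estimate that identifies $D\renorm$ as the Fréchet derivative. This gives differentiability of $\renorm$ at every interior point of its domain.
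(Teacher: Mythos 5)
Your overall roadmap matches the paper's: use the implicit function theorem at the hyperbolic periodic boundary points of $C$, then argue that pulling back and rescaling through the internal structure is differentiable, noting that outputs avoid composition. But there is a genuine gap in your step (i). To invoke the implicit function theorem you need the map $\sigma \mapsto f_\sigma^n(x)$ — equivalently the evaluation $V_x:\bar\phi\mapsto \compose\bar\phi(x)$ — to be Fr\'echet differentiable with respect to the $\oplus$-linear structure on $\ldiff T$. That structure is defined through the nonlinearity operator, $t_1\phi_1\oplus t_2\phi_2 = \inv\nonlin(t_1\nonlin\phi_1+t_2\nonlin\phi_2)$, and is not pointwise addition. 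You offer only that $\compose$ is Lipschitz together with finitely many $C^2$-compositions; Lipschitzness is not a substitute for differentiability in the hypotheses of the IFT, and the composition count does not engage the $\oplus$-structure at all. The paper's proof rests precisely on Lemma~\ref{evaluation-functional}, which first Taylor-expands $V_x$ in the direction of a single time with respect to the ordinary $C^3$-structure, and then converts a perturbation $h_\tau$ in the $\oplus$-sense through $\deriv\inv\nonlin$ (which the paper shows is differentiable), before summing over $\tau$. Without an analogue of that lemma, your argument does not close.

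Once differentiability of the evaluation functional is available, the rest of your outline — differentiable dependence of $C$ via the IFT, differentiability of the pulled-back intervals $\bar C_\pm$, smoothness of the zoom in a compact family of subintervals, and the single-input-single-output structure of the entries of $\bar\phi'_\pm$ — is correct and tracks the paper's proof. Your attention to the $\ell^1$-summability of the assembled derivative over $T'_\pm$, controlled by geometric decay of $\abs{\bar C_\pm(\cdot,2k)}$ via the Expansion and Koebe-space lemmas, is a reasonable way to justify an assembly step that the paper states tersely; keep it, but it cannot substitute for the missing differentiability of $V_x$.
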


\begin{remark}
    $\deriv\renorm_{f^\star}$ has finitely many expanding eigenvalues at any
    fixed point $f^\star$.
    The idea of the proof is that the internal structures of $f^\star$ are pure
    and lie inside the space whose weighted $\ell^1$--norm is bounded,
    $\norm{\bar s}_\mu = \sum \mu^{-\depth\tau}\abs{\bar s(\tau)} < \infty$,
    for some $\mu \in (0,1)$.
    From this and the expressions for the partial derivatives it can be shown
    that $\deriv\renorm_{f^\star}$ is compact as an operator from $\ell^1$ with
    the $\norm\cdot_\mu$--norm to $\ell^1$ with the usual norm.
    Compactness can then be used to prove the statement.
    However, we do not need this result here so we leave it as a remark.
\end{remark}

Differentiability is essentially a consequence of the following lemma:

\begin{lemma} \label{evaluation-functional}
    $V_x: \ldiff T \to \reals$;
    $\bar\phi \mapsto \compose\bar\phi(x)$, is differentiable,
    $\forall x\in[0,1]$.
\end{lemma}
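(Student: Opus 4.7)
The plan is to compute partial derivatives with respect to each time $\tau \in T$, show they are uniformly bounded, and then assemble them into a bounded linear functional on $\ldiff{T}$ which we verify to be the Fréchet derivative of $V_x$.

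First I would fix $\bar\phi \in \ldiff{T}$ and, for each $\tau \in T$, factor $\compose\bar\phi = \Phi_>^\tau \circ \bar\phi(\tau) \circ \Phi_<^\tau$ where $\Phi_<^\tau = \compose\bar\phi|_{\{t<\tau\}}$ and $\Phi_>^\tau = \compose\bar\phi|_{\{t>\tau\}}$ both lie in $\diff^2$. Setting $y = \Phi_<^\tau(x)$ and using the linear structure on $\diff^2$, the directional derivative of the pointwise evaluation $\phi \mapsto \phi(y)$ at $\bar\phi(\tau)$ in direction $\eta \in \diff^3$ is computed explicitly from $(\bar\phi(\tau)\oplus t\eta)(y) = F(y,t)/F(1,t)$ with $F(y,t) = \int_0^y \exp\!\big(\int_0^s (\nonlin\bar\phi(\tau) + t\nonlin\eta)\big)\,ds$; it is a bounded linear functional $L_y(\eta)$ of $\nonlin\eta$ with norm controlled by $\norm{\bar\phi(\tau)}$. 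The chain rule then gives
\[
    \partial_\tau V_x \cdot \eta
    = \deriv\Phi_>^\tau\!\big(\bar\phi(\tau)(y)\big) \cdot L_y(\eta).
\]

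Next I would exploit the fact that $\nonlin\Phi_>^\tau$ and $\nonlin\Phi_<^\tau$ have sup-norm bounded by $\norm{\bar\phi}$, so $\sup\abs{\deriv\Phi_>^\tau} \leq e^{\norm{\bar\phi}}$ uniformly in $\tau$. Combined with the explicit bound on $L_y$, this yields a uniform bound $\norm{\partial_\tau V_x}_{\diff^3\to\reals} \leq K(\norm{\bar\phi})$, which is independent of $\tau$. The candidate derivative
\[
    DV_x(\bar\eta) = \sum_{\tau \in T} \partial_\tau V_x \cdot \bar\eta(\tau)
\]
therefore converges absolutely and defines a bounded linear functional on $\ldiff{T}$, bounded by $K(\norm{\bar\phi})\norm{\bar\eta}$.

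To verify the Fréchet property I would enumerate $T = \{\tau_1, \tau_2, \dots\}$ level by level and, for $\bar\eta$ of small $\ell^1$--norm, introduce the telescope $\bar\phi^{(n)}$ agreeing with $\bar\phi \oplus \bar\eta$ on $\{\tau_1,\dots,\tau_n\}$ and with $\bar\phi$ elsewhere, so that $V_x(\bar\phi\oplus\bar\eta) - V_x(\bar\phi)$ becomes a sum of one-coordinate increments. A single-variable Taylor expansion of each increment at $\bar\phi^{(n-1)}$ produces the partial derivative term plus a remainder $o(\norm{\bar\eta(\tau_n)})$; continuity of the map $\bar\phi\mapsto\partial_\tau V_x$, which follows from the explicit formula together with the Lipschitz continuity of $\compose$ on bounded sets (\cite{MW14}*{Proposition~7.5}), lets us replace the partial at $\bar\phi^{(n-1)}$ by the one at $\bar\phi$ at cost $O(\norm{\bar\eta}\cdot\norm{\bar\eta(\tau_n)})$. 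Summing in $n$ yields total error $o(\norm{\bar\eta}) + O(\norm{\bar\eta}^2)$. The main obstacle is exactly this: proving that the modulus of continuity of $\partial_\tau V_x$ in $\bar\phi$ is uniform in $\tau$ and that the $o(\cdot)$ remainders in the telescope are uniform enough to be absorbed into a single $o(\norm{\bar\eta})$, which reduces to the boundedness and Lipschitz continuity of the composition operator together with the finite-level-sets property of $\depth$.
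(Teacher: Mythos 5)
Your proposal follows essentially the same route as the paper: factor $\compose\bar\phi$ around each time $\tau$ into $\phi_{\sm>\tau}\circ\phi_\tau\circ\phi_{\sm<\tau}$, compute the per-$\tau$ partial derivative via the chain rule and the explicit formula for $\inv\nonlin$, note the uniform bound $e^{\norm{\bar\phi}}$ on the derivatives of partial compositions, and assemble the Fréchet derivative as the absolutely convergent sum of partials. The paper gets there in two passes (additive $\Ck3$ perturbation first, then transfer via $\deriv\inv\nonlin$) whereas you work with $\oplus$ directly, and you make explicit the telescope/uniform-remainder verification that the paper compresses into one sentence — but the decomposition, the key identity, and the summation argument are the same.
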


\begin{proof}
    Given $\tau \in T$, let $\phi_\tau = \bar\phi(\tau)$ and define the
    partial compositions
    $\phi_{\sm<\tau} = \compose\bar\phi|_{\{t < \tau\}}$,
    $\phi_{\sm>\tau} = \compose\bar\phi|_{\{t > \tau\}}$
    and $\phi_{\sm\leq\tau} = \phi_\tau\circ\phi_{\sm<\tau}$.
    Then $\compose\bar\phi = \phi_{\sm>\tau}\circ\phi_\tau\circ\phi_{\sm<\tau}$.
    Let us first show that $V_x$ is differentiable when we use the regular
    linear structure of $\Ck3$.
    Make a $\Ck3$--perturbation $\bar h: T \to \Ck3$ in the $\tau$--direction,
    i.e.\ $h_t = \bar h(t) = 0$, $\forall t\neq\tau$ (and $h_\tau \neq 0$).
    A Taylor expansion gives
    \begin{equation} \label{R-diff-Vx}
    \begin{aligned}
        V_x(\bar\phi + \bar h)
        &=
        \phi_{\sm>\tau}\circ(\phi_\tau + h_\tau)\circ\phi_{\sm<\tau}(x)
        \\
        &=
        V_x(\bar\phi) +
            \deriv\phi_{\sm>\tau}(\phi_{\sm\leq\tau}(x)) \cdot
            h_\tau(\phi_{\sm<\tau}(x)) + o(\abs{h_\tau}).
    \end{aligned}
    \end{equation}
    The linear operator which takes $h_\tau$ to the second term is the
    partial derivative of $V_x(\bar\phi)$ in the direction of~$\tau$.
    The partial derivative depends continuously on $\bar\phi$,
    since $\compose\bar\phi$ is $\Ck1$.
    Now $\deriv V_x(\bar\phi)$ is the sum of all partial derivatives over
    $\tau$, proving that $V_x$ is differentiable.

    To prove that $V_x$ is differentiable on $\ldiff T$, make a
    perturbation $\bar h \in \ldiff T$ in the $\tau$--direction,
    i.e.\ $h_t = \bar h(t) = \id$, $\forall t\neq\tau$ (and $h_\tau \neq \id$).
    The inverse of the nonlinearity operator is differentiable ($\inv\nonlin$
    has an explicit formula, see \cite{MW14}*{Lemma~B.7}, from which
    $\deriv\inv\nonlin$ can be calculated), so
    \begin{equation*}
        \phi_\tau\oplus h_\tau
        =
        \inv\nonlin(\nonlin\phi_\tau + \nonlin h_\tau)
        =
        \phi_\tau + \deriv\inv\nonlin(\nonlin\phi_\tau) \cdot \nonlin h_\tau
        + o(\abs{\nonlin h_\tau}).
    \end{equation*}
    Plug this into \eqref{R-diff-Vx} to prove that the partial derivative in
    the $\tau$--direction is well-defined and then argue as above to get
    differentiability for $V_x$ on $\ldiff T$.
\end{proof}

\begin{proof}[Proof of Theorem~\ref{R-diff}]
    Let $\sigma = (u,v,c,\bar\phi_\pm)$ be renormalizable, let $f_\sigma$ be its
    associated Lorenz map, and let $C$ be the return interval.
    We claim that $C$ depends differentiably on $\sigma$.
    If $p \in \partial C$, then $p$ is a repelling $n$--periodic point of
    $f_\sigma$, for some $n$.
    Define $F(\sigma,x) = f_\sigma^n(x) - x$.
    By Lemma~\ref{evaluation-functional} $\sigma \mapsto f_\sigma^n(x)$, and
    hence $F$, is differentiable.
    By the Implicit Function Theorem (using the fact that $p$ is repelling), if
    $\sigma$ is perturbed slightly then the periodic point persists and its
    new position depends differentiably on $\sigma$.

    The renormalization is defined by pulling back $C$ through internal
    structures, restricting and rescaling.
    By Lemma~\ref{evaluation-functional} this operation is differentiable, so
    the internal structures of the renormalization depend differentiably on
    $\sigma$.
    By \eqref{Rf} and the above the same is true for $u'$, $v'$ and~$c'$.
    Hence $\renorm$ is differentiable.
\end{proof}

The next result is an analog of Proposition~\ref{distortion-invariance} but
for pure Lorenz maps.
Since the composition operator is Lipschitz, given $\delta > 0$ there exists a
maximal $S_\delta > 0$ such that $\norm{\bar s_\pm} \leq S_\delta$ implies
$\norm{\compose\bar s_\pm} \leq \delta$.
This defines $S_\delta$ below.

\begin{proposition}[Bound on internal structures]
    \label{bound-internal-structure}
    For every closed interval $\Delta \subset (0,1)$ and $\delta < \ubdist$
    there exist $N < \infty$ and $K < \infty$ such that if
    $(u,v,c,\bar s_\pm)$ is a pure $(a,b)$--renormalizable Lorenz map,
    $\min\{a,b\} \geq N$, $c \in \Delta$, and
    $\norm{\bar s_\pm} \leq S_\delta$,
    then $\norm{\bar s'_\pm} \leq K \abs C$.
\end{proposition}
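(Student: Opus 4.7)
The plan is to bound $\norm{\bar s'_\pm}=\sum_{\tau'\in T'_\pm}|\bar s'_\pm(\tau')|$ by decomposing the sum along the layered structure of $T'_\pm$ and estimating each piece via the fundamental scaling identity
\[
\nonlin\zoom{\varphi}{I}(t)\;=\;|I|\cdot \nonlin\varphi(h_I(t)),
\]
together with the distortion--nonlinearity correspondence from \eqref{pure-map-nonlinearity}. This converts each contribution into a bound of the form $K\,|\bar C_\pm(\tau')|\cdot \sup_{\bar C_\pm(\tau')}|\nonlin\varphi_{\tau'}|$, where $\varphi_{\tau'}$ is either a branch $q_\mp$ of the standard family at the top-level (odd-depth) times $(k,2k-1)$, or a component $\bar\phi_\square(\tau)$ of the original internal structure at the deeper (even-depth) times $(\tau,2k)$.

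For the deeper contributions, at fixed $k$ the intervals $\bar C_\pm(\tau,2k)$ are preimages of $\bar C_\pm(k+1,2k+1)$ under the partial composition $\compose\bar\phi_\square|_{\{t\geq\tau\}}$, which has bounded distortion by the hypothesis $\norm{\bar s_\square}\leq S_\delta$. Hence $\sup_\tau|\bar C_\pm(\tau,2k)|\leq K|\bar C_\pm(k+1,2k+1)|$, and $|\nonlin\bar\phi_\square(\tau)|\lesssim |\bar s_\square(\tau)|$ by \eqref{pure-map-nonlinearity}. Summing $\tau$ first then $k$ yields
\[
\sum_{k,\tau}|\bar s'_\pm(\tau,2k)|
\;\lesssim\;\norm{\bar s_\square}\sum_{k}|\bar C_\pm(k+1,2k+1)|
\;\lesssim\; S_\delta\cdot |C|,
\]
where the last step uses that $\{\bar C_\pm(k+1,2k+1)\}_k$ are iterated $f_\mp$-preimages of $C$ whose sizes decay geometrically (by the Expansion Lemma together with Proposition~\ref{distortion-invariance}).

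For the top-level contributions, write $I_k=\bar C_\pm(k,2k-1)=f_\mp^{-(n_\pm+1-k)}(C)$. The explicit formula $|\nonlin q_\mp(x)|=(\alpha-1)/|x-c|$ combined with $\log(1+t)\leq t$ yields $|\bar s'_\pm(k,2k-1)|\lesssim |I_k|/\dist(I_k,c)$. The Expansion Lemma combined with $c\in\Delta$ bounds $\dist(I_k,c)$ below by a positive constant uniform in $k$. Bounded Koebe distortion of the chain $f_\mp^{n_\pm-k}|_{I_k}\to C$ (obtained from Lemma~\ref{bounded-geometry}) gives $|I_k|\lesssim |C|/\deriv f_\mp^{n_\pm-k}(\xi_k)$, with derivatives growing geometrically in $n_\pm-k$ by the Expansion Lemma. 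Summing over $k$ produces a convergent geometric series dominated by $K|C|$, completing the bound.

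The main technical obstacle is the boundary term $k=n_\pm$, where $I_{n_\pm}=f_\mp^{-1}(C)$ is the interval of the chain nearest the critical point and $|\nonlin q_\mp|$ is largest. Crude Koebe space only gives $\dist(I_{n_\pm},c)\gtrsim |C|$, which would allow an $O(1)$ contribution; instead one must combine the Expansion Lemma with Lemma~\ref{bounded-geometry} and the fact that $|C|$ is exponentially small in $\min\{a,b\}$ (Lemma~\ref{bounded-space}, using $c\in\Delta$) to show that the right endpoint of $C$ stays at a positive distance from the corresponding critical value, upgrading the lower bound on $\dist(I_{n_\pm},c)$ to a uniform positive constant and thereby reducing this boundary contribution to $O(|C|)$ as required.
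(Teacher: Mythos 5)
Your proof follows essentially the same route as the paper's: split $\norm{\bar s'_\pm}$ into the top-level (standard-family) part, bounded via the estimate $\abs s\leq(\alpha-1)\abs I/\dist(I,c)$ for $\zeta_s=\zoom{q}{I}$ with the distances $\dist(\bar C_\pm(k,2k-1),c)$ controlled uniformly by the Expansion Lemma and the smallness of $\abs C$ from Lemma~\ref{bounded-space}, and the lower-level part, bounded via bounded distortion of partial compositions together with the rescaling contraction of Lemma~\ref{pure-contraction}. The only cosmetic difference is that you sum the top-level interval lengths by invoking Koebe distortion through Lemma~\ref{bounded-geometry}, while the paper reaches the same bound $\sum_k\abs{\bar C_\pm(k,2k-1)}\leq K\abs C$ more directly from the observation that $\delta<\ubdist$ plus the Expansion Lemma make $0$ and $1$ uniformly attracting fixed points of $f_\pm^{-1}$.
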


Nonlinearities are contracted under rescaling, whereas for the distortion we
have:

\begin{lemma} \label{pure-contraction}
    Let $\zeta_{s_j} = \zoom{\zeta_s}{I_j}$.
    For all $S < \infty$ there exists $K < \infty$ such that if $\abs{s} < S$,
    then $\sum{\abs{s_j}} \leq K \abs s \sum\abs{I_j}$.
\end{lemma}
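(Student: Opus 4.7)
The plan is to pass through the nonlinearity operator, exploiting that (i) nonlinearity contracts linearly in the window size under rescaling, and (ii) for pure maps the sup nonlinearity and the absolute signed distortion are comparable (Lipschitz in both directions on bounded sets).

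First, I would record how nonlinearity transforms under rescaling. If $h_I\colon [0,1]\to I$ is the affine parametrization and $\tilde\phi=\zoom{\phi}{I}=\inv h_{\phi(I)}\circ\phi\circ h_I$, a direct differentiation of $\log\deriv\tilde\phi(t)=\log(\abs I/\abs{\phi(I)})+\log\deriv\phi(h_I(t))$ yields
\begin{equation*}
    \nonlin\tilde\phi(t) \;=\; \abs{I}\cdot \nonlin\phi(h_I(t)),\qquad t\in[0,1].
\end{equation*}
Since $h_I([0,1])=I\subset[0,1]$, this gives $\sup\abs{\nonlin\tilde\phi}\leq\abs{I}\sup\abs{\nonlin\phi}$. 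Applied to $\tilde\phi=\zeta_{s_j}$ and $\phi=\zeta_s$, we get $\sup\abs{\nonlin\zeta_{s_j}}\leq\abs{I_j}\sup\abs{\nonlin\zeta_s}$.

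Second, I would use the identity \eqref{pure-map-nonlinearity}, namely $\sup\abs{\nonlin\zeta_s}=(\alpha-1)(e^{\abs s/(\alpha-1)}-1)$, to convert between distortion and nonlinearity. The elementary inequality $e^x-1\geq x$ (valid for every $x\in\reals$) gives the unconditional bound $\abs{s_j}\leq\sup\abs{\nonlin\zeta_{s_j}}$. Going the other way, on the bounded set $\abs s\leq S$ the quotient $(e^x-1)/x$ is bounded by some $K=K(S)<\infty$, so $\sup\abs{\nonlin\zeta_s}\leq K\abs{s}$.

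Combining the three inequalities term-by-term,
\begin{equation*}
    \abs{s_j}\;\leq\;\sup\abs{\nonlin\zeta_{s_j}}\;\leq\;\abs{I_j}\sup\abs{\nonlin\zeta_s}\;\leq\;K\abs{s}\abs{I_j},
\end{equation*}
and summing over $j$ yields $\sum\abs{s_j}\leq K\abs{s}\sum\abs{I_j}$, as required. The only nontrivial step is the nonlinearity transformation formula under rescaling, but that is a one-line chain-rule computation; there is no real obstacle here, which is precisely why this lemma is meant to serve as an easy companion to Proposition~\ref{bound-internal-structure}.
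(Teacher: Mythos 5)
Your proof is correct and takes essentially the same approach as the paper's: the scaling law $\nonlin\zeta_{s_j}(t)=\abs{I_j}\,\nonlin\zeta_s(h_{I_j}(t))$ (which you derive directly, and the paper obtains as a special case of the chain rule for $\nonlin$ applied to affine conjugation), followed by the two-sided comparison $\abs{s_j}\leq\abs{\nonlin\zeta_{s_j}}$ and $\abs{\nonlin\zeta_s}\leq K\abs s$ both coming from~\eqref{pure-map-nonlinearity}, and then a term-by-term sum.
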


\begin{proof}
    From the chain-rule,
    $\nonlin(g\circ h) = \nonlin g \circ h \cdot \deriv h + \nonlin h$,
    we get
    $\abs{\nonlin\zeta_{s_j}} \leq \abs{I_j} \abs{\nonlin\zeta_s}$.
    Combined with \eqref{pure-map-nonlinearity} this implies
    $\abs{s_j} \leq \abs{\nonlin\zeta_{s_j}} \leq \abs{I_j} \abs{\nonlin\zeta_s} \leq K \abs s \abs{I_j}$.
\end{proof}

\begin{proof}[Proof of Proposition~\ref{bound-internal-structure}]
    Let $f$ be the Lorenz map associated with $(u,v,c,\bar s_\pm)$, and
    let $\hat T'_\pm \subset T'_\pm$ be the top level times.
    By definition
    \begin{equation} \label{bound-internal-structure-s'}
        \norm{\bar s'_\pm}
        =
        \sum_{\tau'\in \hat T'_\pm} \abs{\bar s'_\pm(\tau')} +
            \sum_{\tau'\not\in \hat T'_\pm} \abs{\bar s'_\pm(\tau')}.
    \end{equation}
    The idea of the proof is that the top level term contributes a definite
    amount of distortion whereas the lower levels are contracted (we only
    prove a Lipschitz bound).

    Consider the top level term of \eqref{bound-internal-structure-s'}.
    A calculation shows that if $\zeta_s = \zoom{q}{I}$, then
    $\abs s \leq (\alpha-1) \abs I / \dist(I,c)$.
    Hence
    $\bar s'_\pm(k,2k-1) \leq K\abs*{\bar C_\pm(k,2k-1)} / d_k^\pm$,
    where $d_k^\pm$ is the distance from $\bar C_\pm(k,2k-1)$ to the critical
    point.
    The notation $\bar C_\pm$ is given by step~2 of the definition of
    $\renorm$.
    By the Expansion Lemma and Lemma~\ref{bounded-space} we can choose $N$
    such that $d_k^\pm \geq \rho$ for some $\rho>0$ not depending on~$f$.
    The assumption $\delta < \ubdist$ and the Expansion Lemma imply that
    $0$ and $1$ are attracting fixed points of $\inv f_\pm$ with uniformly
    bounded multipliers.
    Hence $\sum_k{\abs{\bar C_\pm(k,2k-1)}} \leq K \abs C$ and
    \begin{equation} \label{bound-internal-structure-top}
        \sum_{\tau'\in \hat T'_\pm} \abs{\bar s'_\pm(\tau')}
        \leq
        K\abs C / \rho.
    \end{equation}

    Consider the lower level term of \eqref{bound-internal-structure-s'}.
    Every partial composition $\compose\bar s_\pm|_{\{t\geq\tau\}}$ has norm
    bounded by $S_\delta$, so by Proposition~\ref{nonlinearity-prop}
    $\abs{\bar C_\pm(\tau,2k)} \leq K \abs{\bar C_\pm(k+1,2k+1)}$.
    By the above $\sum_k{\abs{\bar C_\pm(k,2k-1)}} \leq K \abs C$, so
    Lemma~\ref{pure-contraction} gives
    \begin{equation} \label{bound-internal-structure-lower}
        \sum_{\tau'\not\in \hat T'_\pm} \abs{\bar s'_\pm(\tau')}
        \leq
        K \sum{\abs{\bar C_\pm(k,2k-1)}} \norm{\bar s_\pm}
        \leq
        K \abs C \norm{\bar s_\pm}.
    \end{equation}

    Combine \eqref{bound-internal-structure-s'},
    \eqref{bound-internal-structure-top}
    and~\eqref{bound-internal-structure-lower} to see that
    $\norm{\bar s'_\pm} \leq K(\inv\rho + \norm{\bar s_\pm}) \abs C$.
\end{proof}

\section{Derivative estimates}
\label{derivative-estimates}

In this section we compute the partial derivatives of the renormalization
operator acting on pure internal structures.
These results will be applied in the next section.
This section can be skipped on a first read-through and referenced back to
later on.

We use the same notation in this section as in \sref{preliminaries},
with the following additions:
$\lambda_x = \deriv f(x)$ and $\lambda'_x = \deriv(\renorm f)(x)$.
We write $g = o(b^{-n})$ to mean that $b^n g \to 0$ as $b \to \infty$, for
every $n > 0$.
Note that we write $\partial_x$ instead of $\tfrac{\partial}{\partial x}$.

\begin{lemma} \label{DR3D}
    For every $\delta < \ubdist$ and compact intervals
    $\Delta \subset (0,1)$ and $P \subset \reals^+$
    there exists $N < \infty$ such that if $f \in \lorenz_\delta$ is
    $(a,b)$--renormalizable, $c \in \Delta$, $\min\{a,b\} \geq N$ and
    $a/b \in P$, then
    \begin{equation*}
        \partial_u\! \begin{pmatrix}
            u' \\ v' \\ c'
        \end{pmatrix}
        = \frac{w_0 + \eps_0}{\abs U},
        \quad
        \partial_v\! \begin{pmatrix}
            u' \\ v' \\ c'
        \end{pmatrix}
        = \frac{w_1 + \eps_1}{\abs V},
        \quad
        \partial_c\! \begin{pmatrix}
            u' \\ v' \\ c'
        \end{pmatrix}
        = \frac{w_2 - A w_0 + B w_1 + \eps_2}{\abs C},
    \end{equation*}
    where $A = \abs{O(a)}$, $B = \abs{O(b)}$,
    $\norm{\eps_i} = o(b^{-n})$, and
    $\{w_i\}$ are the columns of
    \begin{equation}
        W
        =
        \begin{pmatrix}
            1 + \frac{1-u'}{\lambda_0'-1} &
            -\frac{u'}{\lambda_1'-1} &
            \frac{u'\lambda_1'}{\lambda_1'-1}
                - \frac{(1-u')\lambda_0'}{\lambda_0' - 1} \\

            -\frac{v'}{\lambda_0'-1} &
            1 + \frac{1-v'}{\lambda_1'-1} &
            -\frac{v'\lambda_0'}{\lambda_0'-1}
                + \frac{(1-v')\lambda_1'}{\lambda_1' - 1} \\

            \frac{1-c'}{\lambda_0'-1} &
            -\frac{c'}{\lambda_1'-1} &
            1 - \frac{c'\lambda_1'}{\lambda_1'-1}
                - \frac{(1-c')\lambda_0'}{\lambda_0'-1}
        \end{pmatrix}.
    \end{equation}
\end{lemma}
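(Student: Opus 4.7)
The plan is to follow the strategy of Lemma~\ref{DR2D} closely, extending the bookkeeping from two parameters to three. Starting from \eqref{Rf}, write $u' = \abs{q_-(C_-)}/\abs{U}$, $v' = \abs{q_+(C_+)}/\abs{V}$ and $c' = (c-l)/(r-l)$ where $C = [l,r]$, and compute $\partial u'$, $\partial v'$, $\partial c'$ for each $\partial \in \{\partial_u,\partial_v,\partial_c\}$. As in Lemma~\ref{DR2D}, the endpoints $l$, $r$ are repelling periodic points of $f^{a+1}$ and $f^{b+1}$, so the implicit function theorem yields the key relation \eqref{dlr}, which remains valid with $\partial$ denoting any of the three partial derivatives.

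For the $\partial_u$ and $\partial_v$ columns, the $u'$ and $v'$ rows just reproduce the computations of Lemma~\ref{DR2D}. The new ingredient is the $c'$ row, obtained from
\begin{equation*}
    \partial c'
    =
    \frac{\partial c - (1-c')\,\partial l - c'\,\partial r}{\abs{C}}.
\end{equation*}
Substituting the estimates for $\partial_u l$, $\partial_u r$ (and analogously for $\partial_v$) derived from \eqref{du_Phi_l}--\eqref{dv_Phi_r}, and invoking the same controls (Expansion Lemma, Proposition~\ref{distortion-invariance}) to collapse the terms involving $\deriv\Phi$ or $\deriv\Psi$ in a denominator into $o(\inv[n]b)$, one reads off the entries $(3,1) = (1-c')/(\lambda_0'-1)$ and $(3,2) = -c'/(\lambda_1'-1)$ of $W$.

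The main obstacle is the $\partial_c$ column, since now $c$--dependence enters $\Phi = f_+^a\circ\phi$ and $\Psi = f_-^b\circ\psi$ through every one of the $a$ or $b$ iterates of $f_\pm = \phi_\pm\circ q_\pm$, as each $q_\pm$ depends on $c$ explicitly. A chain-rule induction of exactly the kind carried out at the end of Lemma~\ref{DR2D} to bound $\abs{\partial_u\Psi} \leq Kb$ and $\abs{\partial_v\Phi} \leq Ka$ will yield $\abs{\partial_c\Phi} \leq Ka$ and $\abs{\partial_c\Psi} \leq Kb$. The crucial observation is that these $\partial_c$--corrections enter $\partial(\Phi\circ q_-)(l)$ and $\partial(\Psi\circ q_+)(r)$ in exactly the functional slots where the $\partial_u$ and $\partial_v$ perturbations produced the columns $w_0$ and $w_1$. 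Consequently they assemble into $-A\,w_0 + B\,w_1$ with $A = \abs{O(a)}$ and $B = \abs{O(b)}$. What remains after peeling off these corrections is the vector $w_2$, whose entries come from the direct $c$--dependence of $q_\pm(c)$ (producing the factors $\lambda_i'/(\lambda_i'-1)$ reflecting that the target $C$ itself moves with $c$) and from the $\partial_c c = 1$ contribution appearing in $\partial_c c'$.

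Finally, the error term $\eps_2 = o(\inv[n]b)$ is bounded in the same manner as $\eps_0,\eps_1$: the Expansion Lemma together with Proposition~\ref{distortion-invariance} provides uniform exponential growth of $\deriv\Phi$ and $\deriv\Psi$, while Lemma~\ref{bounded-space} gives exponentially small bounds on $\abs{C_\pm}$ for $c\in\Delta$ and $a/b\in P$. Every term carrying a derivative of $\Phi$ or $\Psi$ in a denominator, or a factor of $\abs{C_\pm}$, is then negligible to any polynomial order in $b$, closing the estimate.
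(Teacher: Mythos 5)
Your strategy matches the paper's at the level of bookkeeping: use \eqref{dlr} for all three partials, compute $\partial_c l$, $\partial_c r$, and feed them into \eqref{du'} and its $c'$-analogue \eqref{dc'}. But the proposal glosses over the one piece of genuine analytic content that the $w_2 - A w_0 + B w_1$ structure actually requires, and the analogy you invoke does not supply it.

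The issue is this. To write the $\partial_c$ column as $w_2 - A w_0 + B w_1 + \eps_2$ with \emph{single} constants $A$, $B$, you need the values $\partial_c\Phi(\inv\Phi l)$, $\partial_c\Phi(\inv\Phi r)$ (which appear in $\partial_c l$, $\partial_c(\inv\Phi l)$, $\partial_c(\inv\Phi r)$ respectively) to all agree up to $o(b^{-n})$, and likewise for $\partial_c\Psi$ at the various points of $V$. In other words you need $\partial_c\Phi$ to be approximately \emph{constant} on $U$ (and $\partial_c\Psi$ on $V$), not merely bounded by $Ka$ (resp.\ $Kb$). A size bound alone would give you constants $A_l$, $A_r$ that differ by $O(a)$, and the clean $-Aw_0$ organization would fall apart. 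The analogy you draw to $\abs{\partial_u\Psi}\le Kb$ and $\abs{\partial_v\Phi}\le Ka$ is not the right one: in Lemma~\ref{DR2D} those contributions are divided by $\deriv\Phi$ or $\deriv\Psi$, which grow exponentially, so they are killed wholesale and only a size bound is needed. Here the $\partial_c\Phi$ and $\partial_c\Psi$ contributions \emph{survive} and form an honest part of the leading term; they are not swallowed by an exponential denominator. The paper handles this by proving the constancy statement \eqref{AB} explicitly, via the estimate
\begin{equation*}
\abs{\partial_c\Psi(y)-\partial_c\Psi(x)} \le \frac{Kb}{c}\left(o(b^{-n}) + \abs{\Psi(y)-\Psi(x)}\right) = o(b^{-n}),\quad x,y\in V,
\end{equation*}
which uses that the distortion of $f^{b-i}$ on $f^i\circ\psi(V)$ is exponentially small (Proposition~\ref{distortion-invariance}) and that $\abs{\Psi(y)-\Psi(x)}\le\abs C = o(b^{-n})$ (Lemma~\ref{bounded-space}). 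Your proposal asserts that the corrections "assemble into $-Aw_0 + Bw_1$" without establishing this, which is precisely the nontrivial step. You would also need to verify, as the paper does, that the rescaled first-entry maps satisfy $\deriv\Phi(\inv\Phi l)\abs U/\abs C = 1 + o(b^{-n})$ and the like, so that the $\abs C$-normalization on the $\partial_c$ column comes out correctly; this is implicit in your outline but worth flagging since it relies on the same exponentially small distortion.
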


\begin{proof}
    The statement about the upper-left $2\times2$ matrix was proved in
    Lemma~\ref{DR2D}.
    Here we will calculate $\partial_c u'$, $\partial_c c'$ and
    $\partial_u c'$.
    The calculations for $\partial_c v'$ and $\partial_v c'$ are identical so
    we will not include them here.

    First of all, let us discuss how to choose $N$.
    Let $\delta'$ denote the bound on the norm of the diffeomorphic parts of
    the renormalization, i.e.\ $\renorm f \in \lorenz_{\delta'}$.
    By Lemma~\ref{distortion-invariance} we may choose $N$ such that
    $\delta' < \ubdist$.
    For this choice of $N$ there exists $\lambda > 1$ (not depending on $f$)
    such that both $\lambda_x' \geq \lambda$ and $\lambda_x > \lambda$ for
    $x \in \{0,1\}$.
    This follows from \eqref{q}
    and Lemma~\ref{nonlinearity-prop}.
    Assume that $N$ has been chosen in this way.

    We will derive expressions for $\partial_c l$ and $\partial_c r$,
    where $C = [l,r]$.
    Define
    \begin{equation*}
        A = -\partial_c\Phi(\inv\Phi c),
        \qquad
        B = -\partial_c\Psi(\inv\Psi c),
    \end{equation*}
    where the notation is from~\eqref{Rf}.
    We claim that $A = \abs{O(a)}$, $B = \abs{O(b)}$ and that
    \begin{equation} \label{AB}
        A - \abs*{\partial_c\Phi(\inv\Phi x)} = o(b^{-n}),
        \quad
        B - \abs*{\partial_c\Psi(\inv\Psi y)} = o(b^{-n}),
        \quad
        \forall x \in U, y \in V.
    \end{equation}
    We postpone the proof of this claim.
    From \eqref{dlr} and \eqref{q}
    we get
    \begin{equation*}
        \partial_c l
        =
        \frac{\frac lc \lambda_0' - \partial_c\Phi(\inv\Phi l)}{\lambda_0' - 1}
        =
        \frac{\lambda_0' + A}{\lambda_0' - 1}
            + \frac{\frac{\abs{C_-}}{c} \lambda_0' -
                \partial_c\Phi(\inv\Phi l) - A}%
                {\lambda_0' - 1}.
    \end{equation*}
    We claim that the second fraction in the right-hand side is $o(b^{-n})$.
    This follows from \eqref{AB} and the facts that: $\abs C = o(b^{-n})$ by
    Lemma~\ref{bounded-space}, $c \in \Delta$, and that
    $\lambda_0' \geq \lambda > 1$.
    An identical argument for $\partial_c r$ gives
    \begin{equation} \label{dc_lr}
        \partial_c l = \frac{\lambda_0' + A}{\lambda_0' - 1} + o(b^{-n}),
        \qquad
        \partial_c r = \frac{\lambda_1' + B}{\lambda_1' - 1} + o(b^{-n}).
    \end{equation}

    We now calculate $\partial_c u'$.
    From \eqref{dinvT}, \eqref{AB} and \eqref{dc_lr} we get
    \begin{equation*}
        \partial_c(\inv\Phi l)
        =
        \frac{1}{\deriv\Phi(\inv\Phi l)}
            \left( \frac{\lambda_0' + A}{\lambda_0' - 1} + A + o(b^{-n})
            \right).
    \end{equation*}
    By the mean-value theorem there exists $x \in U$ such that
    $\deriv\Phi(x) = \abs C / \abs U$.
    Furthermore $\deriv\Phi(x) / \deriv\Phi(\inv\Phi l) = 1 + O(\delta')$ by
    Lemma~\ref{nonlinearity-prop}.
    Thus $\deriv\Phi(\inv\Phi l) = (1 + o(b^{-n})) \abs C / \abs U$, since
    $\delta'$ is exponentially small in $b$ by
    Proposition~\ref{distortion-invariance}.
    An identical argument for $\partial_c(\inv\Phi r)$ and using the bounds on
    $A$, $B$ and $\lambda_x'$ gives
    \begin{equation*}
        \frac{\abs C}{\abs U} \partial_c(\inv\Phi l)
        =
        \frac{\lambda_0' (1 + A)}{\lambda_0' - 1} + o(b^{-n}),
        \quad
        \frac{\abs C}{\abs U} \partial_c(\inv\Phi r)
        =
        \frac{\lambda_0' + B}{\lambda_0' - 1} + A + o(b^{-n}).
    \end{equation*}
    Insert these expressions into \eqref{du'} to get
    \begin{equation} \label{dc_u'}
        \abs C \partial_c u'
        =
        -\frac{(1-u') \lambda_0'}{\lambda_0' - 1}
            -\frac{u' \lambda_1'}{\lambda_1' - 1}
            -A \left( 1 + \frac{1-u'}{\lambda_0' - 1} \right)
            -B \frac{u'}{\lambda_1' - 1}
            + o(b^{-n}).
    \end{equation}
    An identical argument gives
    \begin{equation} \label{dc_v'}
        \abs C \partial_c v'
        =
        \frac{(1-v') \lambda_1'}{\lambda_1' - 1}
            + \frac{v' \lambda_0'}{\lambda_0' - 1}
            + B \left( 1 + \frac{1-v'}{\lambda_1' - 1} \right)
            + A \frac{v'}{\lambda_0' - 1}
            + o(b^{-n}).
    \end{equation}

    We will now calculate the partial derivatives of $c'$.
    By definition $\abs C c' = c - l$ and taking partial derivatives gives
    \begin{equation} \label{dc'}
        \abs C \partial c' = \partial c - (1-c') \partial l - c' \partial r.
    \end{equation}
    Inserting \eqref{dc_lr} into \ref{dc'} gives
    \begin{equation} \label{dc_c'}
        \abs C \partial_c c'
        =
        1 - \frac{(1-c')\lambda_0'}{\lambda_0' - 1}
            - \frac{c'\lambda_1'}{\lambda_1' - 1}
            - A\frac{1-c'}{\lambda_0' - 1}
            - B\frac{c'}{\lambda_1' - 1}
            + o(b^{-n}).
    \end{equation}
    From \eqref{du_Phi_l} and \eqref{du_Phi_r} we get expressions for
    $\partial_u l$ and $\partial_u r$, respectively.
    Together with \eqref{dc'} and the above trick we see that
    $\deriv\Phi(\inv\Phi l) \abs U / \abs C = 1 + o(b^{-n})$ and hence
    \begin{equation} \label{du_c'}
        \abs U \partial_u c'
        =
        \frac{1-c'}{\lambda_0' - 1} + o(b^{-n}).
    \end{equation}
    An identical argument gives
    \begin{equation} \label{dv_c'}
        \abs V \partial_v c'
        =
        -\frac{c'}{\lambda_1' - 1} + o(b^{-n}).
    \end{equation}

    Equations \eqref{dc_u'}, \eqref{dc_v'}, \eqref{dc_c'}, \eqref{du_c'} and
    \eqref{dv_c'} imply the lemma.
    It only remains to prove \eqref{AB} and the bounds on $A$ and $B$.

    Let $x \in V$ and note that $\Psi(x) = f^{b-i}(f^i\circ\psi(x))$.
    By the mean-value theorem there exists $x_i \in [0,f^i\circ\psi(x)]$
    such that $\deriv f^{b-i}(x_i) f^i\circ\psi(x) = \Psi(x)$.
    Using the chain-rule to compute $\partial_c\Psi$ and then applying
    \eqref{q}
    and the above we get
    \begin{equation} \label{dc_Psi}
        -\partial_c\Psi(x)
        = \frac 1c
            \sum_{i=0}^{b-1} \deriv f^{b-i}(f^i\circ\psi(x)) f^i\circ\psi(x)
        = \frac{\Psi(x)}{c}
            \sum_{i=0}^{b-1}
            \frac{\deriv f^{b-i}(f^i\circ\psi(x))}{\deriv f^{b-i}(x_i)}.
    \end{equation}
    Since $\lambda_0 \geq \lambda > 1$, $0$ is a uniformly attracting fixed
    point for $\inv f_-$ so the summands are bounded.
    Hence $-\partial_c\Psi(x) \leq K b \Psi(x) / c$ and since
    $c \in \Delta$ this proves the claim.
    The proof that $A = \abs{O(a)}$ is identical.

    We now prove \eqref{AB}.
    Let $x, y \in V$, $x_i = f^i\circ\psi(x)$, and $y_i = f^i\circ\psi(y)$.
    By \eqref{dc_Psi}
    \begin{align*}
        \abs*{\partial_c\Psi(y) - \partial_c\Psi(x)}
        &=
        \frac 1c
        \abs*{\sum_{i=0}^{b-1} \deriv f^{b-i}(x_i)
            \left( \frac{\deriv f^{b-i}(y_i)}{\deriv f^{b-i}(x_i)} y_i - x_i
            \right)}
        \\
        &\leq
        \frac 1c \sum_{i=0}^{b-1} \deriv f^{b-i}(x_i)
            \left(
            \abs*{\frac{\deriv f^{b-i}(y_i)}{\deriv f^{b-i}(x_i)} - 1} y_i
            + \abs*{y_i - x_i}
            \right).
    \end{align*}
    The distortion of $f^{b-i}$ on $f^i\circ\psi(V)$ is exponentially
    small in $b$ by
    Proposition~\ref{distortion-invariance}.
    In particular, the distortion is $o(b^{-n})$.
    Hence, using a mean-value theorem argument as in the above, we get
    \begin{equation*}
        \abs*{\partial_c\Psi(y) - \partial_c\Psi(x)}
        \leq
        \frac 1c K b \left( o(b^{-n}) + \abs{\Psi(y) - \Psi(x)} \right)
        = o(b^{-n}),
    \end{equation*}
    since $\abs*{\Psi(y) - \Psi(x)} \leq \abs C = o(b^{-n})$ by
    Lemma~\ref{bounded-space}.
\end{proof}

In the remainder of this section we use the notation of
\sref{the-internal-structure-of-renormalization}.
Let $T = (T_-,T_+)$ be a pair of time sets.
Since the composition operator is Lipschitz there exists a maximal
$S_\delta > 0$ such that if $\bar s_\pm \in \lpure{T_\pm}$ and
$\norm{\bar s_\pm} \leq S_\delta$, then $\norm{\compose\bar s_\pm} \leq \delta$.

\begin{lemma} \label{partials}
    For every $\delta < \ubdist$ and compact intervals
    $\Delta \subset (0,1)$ and $P \subset \reals^+$
    there exists $N < \infty$ and $K < \infty$ such that if
    $(u,v,c,\bar s_\pm)$ is a pure $(a,b)$--renormalizable Lorenz map,
    $c \in \Delta$, $\min\{a,b\} \geq N$, $a/b \in P$ and
    $\norm{\bar s_\pm} \leq S_\delta$, then
    \begin{gather}
        \abs{C} \abs{\partial_s u'} \leq K b, \qquad
        \abs{C} \abs{\partial_s v'} \leq K b, \qquad
        \abs{C} \abs{\partial_s c'} \leq K b, \label{partials2}
        \\
        \abs U \!\sum_{\tau' \in T'_\pm} \abs{\partial_u \bar s'_\pm(\tau')}
        = o(b^{-n}),
        \qquad
        \abs V \!\sum_{\tau' \in T'_\pm} \abs{\partial_v \bar s'_\pm(\tau')}
        = o(b^{-n}),
        \label{partials3}
        \\
        \abs C \!\sum_{\tau' \in T'_\pm} \abs{\partial_c \bar s'_\pm(\tau')}
        = o(b^{-n}),
        \qquad
        \abs C \!\sum_{\tau' \in T'_\pm} \abs{\partial_s \bar s'_\pm(\tau')}
        = o(b^{-n}).
        \label{partials4}
    \end{gather}
    Here $\partial_s$ denotes partial derivative with respect to an arbitrary
    variable of $\lpure{T_\pm}$.
\end{lemma}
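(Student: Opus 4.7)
The plan is to adapt the derivative calculation of Lemma~\ref{DR3D} and the splitting argument from Proposition~\ref{bound-internal-structure}, using Lemma~\ref{pure-contraction} as the main mechanism for absorbing factors of interval length. Throughout, the factor $\abs{C}$ (which by Lemma~\ref{bounded-space} is $o(b^{-n})$ under the hypotheses $c\in\Delta$, $\delta<\ubdist$ and $a/b\in P$) is the reservoir from which all $o(b^{-n})$ estimates are drawn; the job is just to book-keep which intermediate quantities carry at most polynomial growth in $b$.

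For \eqref{partials2}, I would repeat the computation leading to \eqref{dc_u'}, \eqref{dc_v'} and \eqref{dc_c'} with $\partial_c$ replaced by $\partial_s$, where $s = \bar s_\pm(\tau_0)$ is a single internal-structure coordinate. The structural formulas \eqref{dinvT}, \eqref{dlr}, \eqref{du'} and \eqref{dc'} go through verbatim. The only new inputs are bounds on $\partial_s \Phi$ and $\partial_s \Psi$, which are obtained from Lemma~\ref{evaluation-functional}: differentiating $\Phi = f_+^a\circ\phi$ and $\Psi = f_-^b\circ\psi$ in the single $\tau_0$--direction produces a single chain-rule term (not a sum over all $\tau$), whose magnitude is controlled by $\deriv f^{a-i}$ or $\deriv f^{b-i}$ composed with $\deriv\zeta_{\bar s_\pm(\tau_0)}$. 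Since $\norm{\compose\bar s_\pm}\le\delta<\ubdist$, Proposition~\ref{distortion-invariance} and the Expansion Lemma bound the ratios of such derivatives by constants, giving $\abs{\partial_s \Phi} = O(a)$ and $\abs{\partial_s \Psi} = O(b)$. Plugging in as in Lemma~\ref{DR3D} yields $\abs{C}\abs{\partial_s u'}, \abs{C}\abs{\partial_s v'}, \abs{C}\abs{\partial_s c'} \leq Kb$, using $a/b\in P$ compact.

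For \eqref{partials3} and \eqref{partials4}, I would split each sum $\sum_{\tau'}\abs{\partial_* \bar s'_\pm(\tau')}$ into top-level and lower-level parts, as in the proof of Proposition~\ref{bound-internal-structure}. Top-level: $\bar s'_\pm(k,2k-1)$ is the signed distortion of $\zoom{q_\mp}{\bar C_\pm(k,2k-1)}$, which by the estimate $\abs{s}\le(\alpha-1)\abs{I}/\dist(I,c)$ is bounded by a uniform constant times $\abs{\bar C_\pm(k,2k-1)}$. Its $\partial_u,\partial_v,\partial_c,\partial_s$ derivatives break into (i) derivatives of $q_\mp$ (present only for $\partial_{u,v,c}$ and trivially bounded) and (ii) derivatives of the intervals $\bar C_\pm(k,2k-1) = f_\mp^{-(n_\pm+1-k)}(C)$, which are tracked through the chain rule just as in the proofs of Lemmas~\ref{DR3D} and~\ref{partials}; both the lengths and their derivatives decay geometrically in $k$ by the Expansion Lemma, so the sum over $k$ is $O(\abs{C})$. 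Lower-level: by Lemma~\ref{pure-contraction}, $\abs{\bar s'_\pm(\tau,2k)}\le K\abs{\bar s_{\Sm\square}(\tau)}\abs{\bar C_\pm(\tau,2k)}$, and by Lemma~\ref{nonlinearity-prop} the intervals $\bar C_\pm(\tau,2k)$ have lengths comparable to $\abs{\bar C_\pm(k+1,2k+1)}$ uniformly in $\tau$. Differentiating and summing first over $\tau$ (using $\sum_\tau\abs{\bar s_{\Sm\square}(\tau)}\le S_\delta$) and then over $k$ again yields an overall $O(\abs{C})$ bound. Multiplying through by $\abs{U}$, $\abs{V}$ or $\abs{C}$ (each no larger than $K\abs{C}$ modulo constants from the Expansion Lemma) produces the required $o(b^{-n})$.

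The main obstacle is the $\partial_s \bar s'_\pm$ estimate, because the perturbation of an internal-structure coordinate propagates in two distinct ways: it perturbs a specific diffeomorphism $\bar\phi_{\Sm\square}(\tau_0)$ appearing in one slot of $\bar\phi'_\pm$, and it simultaneously perturbs every interval $\bar C_\pm(\tau,2k)$ that is defined by pulling back through $\compose\bar\phi_{\Sm\square}|_{\{t\ge\tau\}}$. The direct contribution is controlled by Lemma~\ref{pure-contraction}, which gives a factor $\abs{\bar C_\pm(\tau_0,2k)}$ summing to $O(\abs{C})$. The indirect contribution requires differentiating the pullback through the internal structure, which one expresses via the evaluation-functional formula \eqref{R-diff-Vx} and bounds using the uniform bound $\norm{\compose\bar s_\pm}\le\delta$ together with Lemma~\ref{nonlinearity-prop}; here the contracting effect of Lemma~\ref{pure-contraction} combined with $\sum_\tau\abs{\bar s_{\Sm\square}(\tau)}\le S_\delta$ ensures that the double sum over $(\tau,k)$ remains $O(\abs{C})$, so that the final multiplication by $\abs{C}$ produces $o(b^{-n})$.
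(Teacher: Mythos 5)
Your plan for \eqref{partials2} is essentially the paper's: the bounds $\abs{\partial_s\Phi},\abs{\partial_s\Psi} = O(b)$ are obtained by differentiating $\Phi = f_+^a\circ\phi$, $\Psi = f_-^b\circ\psi$ in a single $\tau_0$--direction and summing a geometric estimate, and inserting them into the same structural identities \eqref{du'}, \eqref{dc'}, \eqref{dinvT}, \eqref{dlr} that were used in Lemma~\ref{DR3D}. (The paper derives these via a pointwise bound $\abs{\partial_s f(x)} \leq K f(x)(1-f(x))$ coming from the explicit formula $\partial_t\zeta_t(x)$ and a Taylor expansion, which is a cleaner route than appealing to Lemma~\ref{evaluation-functional} abstractly, but your mechanism is the same in substance.)

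The argument for \eqref{partials3}--\eqref{partials4} has a genuine gap. You repeatedly try to ``differentiate'' bounds of the form $\abs{s'} \leq K\abs{I}$ (the top-level estimate $\abs{s}\le(\alpha-1)\abs I/\dist(I,c)$, and the Lemma~\ref{pure-contraction} inequality at lower level). An inequality on $\abs{s'}$ does not give information about $\abs{\partial s'}$, so this step does not go through as written. What the paper actually does is differentiate the exact identity $s' = \log\{\deriv h(y)/\deriv h(x)\}$ (with $I = [x,y]$ the pullback interval and $h = q_\mp$ or $h = \zeta_{\hat s}$), decomposing $\partial s' = \partial s'_0 + \partial s'_1$ where $\partial s'_0$ tracks the motion of the endpoints $x,y$ and $\partial s'_1$ tracks the direct perturbation of $h$. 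The crucial structural facts — $\abs{\partial s'_0} \leq K\abs{\nonlin\zeta_{s'}}\max\{\abs{\partial l},\abs{\partial r},b\}/\abs{C}$ so that summing over $\tau'$ picks up $\norm{\bar s'_\pm}$ and Proposition~\ref{bound-internal-structure} applies; $\partial_u s'_1 = \partial_v s'_1 = 0$; $\partial_c s'_1 = -\abs{\inv F(C)}/c$ only at top level; and $\partial_s s'_1$ nonzero only when perturbing the matching time $\hat\tau$, with bound $K\abs{\inv F(C)}$ — are what make the sums converge, and none of them can be read off the value bounds. Your paragraph on the ``main obstacle'' correctly identifies the two propagation channels but then again reaches for Lemma~\ref{pure-contraction} to control a derivative when what is needed is the explicit computation from \eqref{ds_zeta_s}.

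Finally a smaller bookkeeping point: the claim that $\sum_{\tau'}\abs{\partial_u\bar s'_\pm(\tau')} = O(\abs{C})$ and then multiplying by $\abs{U}$ gives $o(b^{-n})$ is off. The sum is of order $\norm{\bar s'_\pm}/\abs{U}$, not $O(\abs{C})$; the $\abs{U}$ prefactor in the statement \eqref{partials3} is precisely what cancels the $1/\abs{U}$ coming from $\partial_u l\sim\abs{C}/\abs{U}$, leaving $K\norm{\bar s'_\pm}=O(\abs C)=o(b^{-n})$. Your way of bookkeeping would (if the first claim were true) give $\abs{U}\abs{C}\lesssim\abs{C}^2$, which is still $o(b^{-n})$, but the intermediate claim is not correct as stated.
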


\begin{proof}
    Let $f$ be the map associated with $(u,v,c,\bar s_\pm)$ and let
    $C = [l,r]$.
    Assume that $N$ has been chosen as in the beginning of
    the proof of Lemma~\ref{DR3D}, so that $\delta' < \ubdist$ and
    $\lambda_x' \geq \lambda > 1$ for $x\in\{0,1\}$.
    Throughout the proof assume without loss of generality that $\partial_s$ is
    the partial derivative with respect to the variable at a time
    $\tau \in T_-$ and write $s = \bar s_-(\tau)$.
    That is, we will perturb $s = \bar s_-(\tau)$ slightly and see how the
    renormalization changes.

    We will need the following bounds (see \eqref{Rf} for notation):
    \begin{equation} \label{ds_lr}
        \abs{\partial_s l} \leq K b, \quad \abs{\partial_s r} \leq K b,
        \quad
        \abs{\partial_s\Phi} \leq K b, \quad \abs{\partial_s\Psi} \leq K b.
    \end{equation}
    The first two bounds follow from the last two bounds,
    $\lambda_x' \geq \lambda$, and~\eqref{dlr}
    so let us prove the last two bounds.
    Note that
    \begin{equation} \label{ds-pure}
        \abs*{\partial_t\zeta_t(x)} \leq K x (1 - x).
    \end{equation}
    This can be seen by taking partial derivatives of \eqref{pure-map}
    to get
    \begin{equation} \label{ds_zeta_s}
        \partial_t\zeta_t(x)
        =
        \frac{\deriv\zeta_t(x) x - \deriv\zeta_t(1) \zeta_t(x)}%
            {\nonlin\zeta_t(1)}.
    \end{equation}
    From this \eqref{ds-pure} follows by Taylor expanding $\zeta_t(x)$ and
    $\deriv\zeta_t(x)$ around $0$ and~$1$.

    Let $\bar\phi$ be the internal structure associated with $\compose\bar s_-$.
    We write $\phi = \phi_{\sm>\tau}\circ\phi_\tau\circ\phi_{\sm<\tau}$,
    where $\phi = \compose\bar\phi$,
    $\phi_{\sm>\tau} = \compose\bar\phi|_{\{t>\tau\}}$,
    $\phi_{\sm<\tau} = \compose\bar\phi|_{\{t<\tau\}}$ and
    $\phi_\tau = \bar\phi(\tau) = \zeta_s$.
    We get
    \begin{equation*}
        \partial_s f_-(x)
        =
        \deriv\phi_{\sm>\tau}(\zeta_s\circ\phi_{\sm<\tau}\circ q(x))
        \partial_s\zeta_s(\phi_{\sm<\tau}\circ q(x)).
    \end{equation*}
    A Taylor expansion shows that (here
    $\phi_{\sm\geq\tau} = \phi_{\sm>\tau}\circ\zeta_s$)
    \begin{equation*}
        \partial_s\zeta_s(\phi_{\sm<\tau}\circ q(x))
        =
        \partial_s\zeta_s(f x) +
            \deriv\partial_s\zeta_s(t)
            (\inv\phi_{\sm\geq\tau}\circ f(x) - f(x)),
    \end{equation*}
    for some $t$.
    By differentiating \eqref{ds_zeta_s} we see that
    $\abs{\deriv\partial_s\zeta_s} \leq K$, since
    $\abs{\deriv^2\zeta_s(x)} \leq K$ and $\deriv\zeta_s(x) \leq K$.
    By Lemma~\ref{nonlinearity-prop}
    $\inv K \leq \abs{\deriv\inv\phi_{\sm\geq\tau}} \leq K$, so
    \begin{equation*}
        \abs{\inv\phi_{\sm\geq\tau}\circ f(x) - f(x)}
        \leq
        K f(x) (1 - f(x)).
    \end{equation*}
    Combine these two facts with \eqref{ds-pure} to see that
    \begin{equation} \label{ds_f}
        \abs{\partial_s f(x)} \leq K f(x) (1 - f(x)).
    \end{equation}
    Note that this holds for both branches of $f$, since $s$ was assumed to be
    at a time $\tau \in T_-$, so $\partial_s f_+ = 0$.
    The chain-rule and \eqref{ds_f} gives
    \begin{equation} \label{ds_fn}
        \abs{\partial_s f^{n+1}(x)}
        \leq
        K \sum_{i=0}^n \deriv f^{n-i}(f^{i+1}x) f^{i+1}(x) (1 - f^{i+1}(x)).
    \end{equation}
    The same argument as was used to prove $\abs{\partial_c\Psi} \leq K b$
    in the proof of Lemma~\ref{DR3D} combined with \eqref{ds_fn}
    shows that $\abs{\partial_s f^n} \leq K n$.
    Since $\Phi = f_+^a\circ\phi$ and $\Psi = f_-^b\circ\psi$,
    where $\psi = \compose\bar s_+$, this
    implies that $\abs{\partial_s\Phi} \leq Kb$ (nb.\ $a/b \in P$) and
    $\abs{\partial_s\Psi} \leq Kb$.

    From \eqref{dc'} and \eqref{ds_lr} we immediately get
    the bound on $\partial_s c'$ claimed in \eqref{partials2}.
    By the mean value theorem $\deriv\Phi(x) = \abs C / \abs U$ for some
    $x \in U$.
    Since the distortion of $\Phi|_U$ is bounded by $\delta'$ we get that
    $\abs{\deriv\Phi} \leq K \abs C / \abs U$.
    From \eqref{dinvT}, \eqref{ds_lr} and
    $\abs{\deriv\Phi} \leq K \abs{C}/\abs{U}$
    we get
    $\abs{\partial_s(\inv\Phi l)} \leq K b \abs{U}/\abs{C}$ and similarly
    for $\abs{\partial_s(\inv\Phi r)}$.
    Inserting this into \eqref{du'} proves the bound on $\partial_s u'$
    in \eqref{partials2}.
    The bound on $\partial_s v'$ follows from an identical argument.

    \smallskip
    We will now estimate the partial derivatives $\partial s'_\pm(\tau')$.
    Assume without loss of generality that $\tau' \in T'_+$ and let
    $s' = \bar s'_+(\tau')$.
    By definition $\zeta_{s'} = \zoom{h}{I}$ for some map $h$ and
    interval $I = [x,y]$.
    By step~3 of
    \sref{the-internal-structure-of-renormalization_renormalization} either:
    \begin{enumerate*}[label=(\roman*)]
        \item $\tau'$ is at the top level and $h = q_-$, or
            \label{ds'1}
        \item
            $\depth'_+(\tau') > 0$ and $h = \zeta_{\hat s}$, where we may
            assume without loss of generality that 
            $\hat s = \bar s_-(\hat\tau)$, for some $\hat\tau \in T_-$.
            \label{ds'2}
    \end{enumerate*}
    Since distortion is invariant under rescaling
    $s' = \log\{\deriv h(y)/\deriv h(x)\}$.
    The chain-rule gives
    \begin{equation} \label{ds'}
        \partial s'
        =
        \nonlin h(y)\partial y
        - \nonlin h(x)\partial x
        + \frac{\partial(\deriv h)(y)}{\deriv h(y)}
        - \frac{\partial(\deriv h)(x)}{\deriv h(x)}
        =
        \partial s'_0 + \partial s'_1,
    \end{equation}
    where $\partial s'_0$ and $\partial s'_1$ denote the two difference terms.

    Consider the difference term $\partial s'_0$ of \eqref{ds'}.
    By definition $I$ is mapped diffeomorphically to $C=[l,r]$ by some
    first-entry map $F:I\to C$.
    In case \ref{ds'1} $F$ is of the form $F = f_-^k$, else
    $F = f_-^k\circ\phi_{\sm\geq\hat\tau}$, for some $k$.
    Either way, we have that
    \begin{equation*}
        \partial x = \frac{\partial l - \partial F(x)}{\deriv F(x)},
        \qquad
        \partial y = \frac{\partial r - \partial F(y)}{\deriv F(y)}.
    \end{equation*}
    We can bound $\abs{\partial F} \leq K b$ using the same estimates we did
    for $\partial\Phi$ and $\partial\Psi$ in the above and in the proofs of
    Lemmas \ref{DR2D} and~\ref{DR3D}.
    The distortion of $F$ tends to zero as $N \to \infty$ by
    Proposition~\ref{distortion-invariance}.
    Hence $\abs{I} \abs{\deriv F} \geq \abs{C}/K$.
    Using the above, $\nonlin\zeta_{s'}(1) = \abs{I}\nonlin h(y)$
    and $\nonlin\zeta_{s'}(0) = \abs{I}\nonlin h(x)$
    we get
    \begin{equation*}
        \abs{\partial s'_0} =
        \abs*{\nonlin h(y)\partial y - \nonlin h(x)\partial x}
        \leq \frac{K \abs{\nonlin\zeta_{s'}}
            \max\{\abs{\partial l}, \abs{\partial r}, b\}}%
            {\abs C}.
    \end{equation*}
    We know from the proof of Lemma~\ref{DR2D} that $\partial_u l$ is of the
    order $\abs C / \abs U$ and $\partial_v r$ is of the order
    $\abs C / \abs V$.
    All other partial derivatives $\partial l$ and $\partial r$ are at most
    of the order of~$b$, by the above and the proofs of Lemmas \ref{DR2D}
    and~\ref{DR3D}.
    Apply \eqref{pure-map-nonlinearity} to see that
    \begin{equation*}
        \abs{\partial_u s'_0} \leq K \abs{s'} / \abs U,
        \quad
        \abs{\partial_v s'_0} \leq K \abs{s'} / \abs V,
        \quad
        \abs{\partial_\star s'_0} \leq K b \abs{s'} / \abs C,
    \end{equation*}
    for $\star = c, s$.
    By Lemma~\ref{bounded-space} and Proposition~\ref{bound-internal-structure},
    $b\norm{\bar s'_+} = o(b^{-n})$.
    Since $\sum\abs{s'} = \norm{\bar s'_+}$ it only remains to bound the
    $\partial s_1'$ term.

    Consider the difference term $\partial s'_1$ of \eqref{ds'}.
    A calculation using \eqref{q}
    shows that
    $\partial_u s_1' = \partial_v s_1' = 0$ and hence there is nothing more
    to prove for \eqref{partials3}.
    It is similarly straightforward from \eqref{q}
    to calculate $\partial_c s'_1 = -\abs{\inv F(C)}/c$ in case~\ref{ds'1} and
    $\partial_c s'_1 = 0$ otherwise.
    From the Expansion Lemma and Lemma~\ref{bounded-space} we get
    $\sum \abs{f_-^{-i}(C)} \leq K\abs C = o(b^{-n})$,
    and the first equation of \eqref{partials4} follows.
    The argument for the second equation of \eqref{partials4} follows similarly
    from $\partial_s s'_1 = 0$ in case~\ref{ds'1}, and the claim that in
    case~\ref{ds'2} $\abs{\partial_s s'_1} \leq K \abs{\inv F(C)}$ if
    $s=\hat s$ and $\partial_s s'_1 = 0$ if $s\neq\hat s$.
    To prove the claim, use \eqref{ds_zeta_s} to get
    $\partial_s s'_1 = (\nonlin\zeta_s(y)y - \nonlin\zeta_s(x)x) / \nonlin\zeta_s(1)$
    and then Taylor expand $\nonlin\zeta_s(y)y$ around $x$ and use that
    $\deriv(\nonlin\zeta_s(x)x)$ and $\nonlin\zeta_s(1)$ are bounded.
\end{proof}

\section{Unstable manifolds}
\label{unstable-manifolds}

In this section we prove the existence of unstable manifolds at the fixed
points of~\sref{existence-of-fixed-points}.
We begin by deriving conditions on $\deriv\renorm$ which are sufficient
for the existence of local unstable manifolds and use the results of
\sref{derivative-estimates} to show that they are satisfied.
After this we describe the global unstable manifolds.
Topologically full families only need two parameters so the unstable manifolds
were expected to be two-dimensional, but we prove that they are at least
three-dimensional.
The ``extra'' unstable dimension is related to the movement of the critical
point and it causes infinitely renormalizable maps to appear inside the
unstable manifold.
In particular, the infinitely renormalizable maps cannot form a stable manifold
as was expected.

\subsection{Cone fields and local unstable manifolds}
\label{cone-fields-and-local-unstable-manifolds}

In this subsection we will give conditions for the existence of a
local unstable manifold that are suitable for our setup.
We could not find a reference applicable to our situation, because:
\begin{enumerate*}
    \item we have to use unconventional bounds like the third inequality
        of~\eqref{pointwise-deriv-bound},
    \item we do not have good enough bounds on the derivative to get
        hyperbolicity, and
    \item our map lives on an infinite-dimensional space and it is not a
        diffeomorphism.
\end{enumerate*}
The first issue is the most significant difference to textbook examples.
Our proof is an adaptation of \cite{KH95}*{Theorem~6.2.8}.

The setup is as follows.
We have a smooth map $F: D \subset X\times Y \to X\times Y$ on an open
neighborhood $D$ of $0$, which is a fixed point of~$F$.
Here $X = \reals^n$ for some $n$ and $Y$ is a Banach space.
We will use the notation
\begin{equation*}
    F(x,y) = (\xi(x,y), \eta(x,y)),
\end{equation*}
and write $z = (x,y)$.
The derivative of $F$ is assumed to satisfy the bounds
\begin{equation} \label{pointwise-deriv-bound}
    \frac{\norm{\deriv_x\xi(z)u}}{\norm u} \geq \mu,
    \;
    \frac{\norm*{\deriv_y\xi(z)}}{\norm{\deriv_x\xi(z)u}}
    \leq \frac\nu{\norm u},
    \;
    \frac{\norm*{\deriv_x\eta(z) u}}{\norm*{\deriv_x\xi(z) u}}
    \leq \lambda,
    \;
    \frac{\norm*{\deriv_y\eta(z)}}{\norm{\deriv_x\xi(z)u}}
    \leq \frac{1 - \tau}{\norm u},
\end{equation}
$\forall z \in D$ and $\forall u \in X \setminus \{0\}$.
These bounds state that the maximum expansion of $\deriv_y\xi$,
$\deriv_x\eta$ and $\deriv_y\eta$ are comparable to the minimum expansion
of~$\deriv_x\xi$.

We will begin by giving conditions for the existence of an invariant cone field.
To this end, let $H_\theta$ denote the standard horizontal cone
\begin{equation*}
    H_\theta = \{ (u,v) \in X\times Y \mid \norm v \leq \theta \norm u \},
\end{equation*}
and write $w = (u,v)$.

\begin{lemma}[Invariant cone field] \label{cone-field}
    Define
    \begin{equation*}
        \theta' = \frac{\lambda + (1-\tau)\theta}{1 - \nu\theta},
        \quad
        \theta_0 = \frac{2 \lambda}\tau,
        \quad
        \theta_1 = \frac\tau\nu - \frac{2 \lambda}\tau,
        \quad
        \theta_2 = \frac{\mu - 1}{\mu\nu + 1}.
    \end{equation*}
    If $2 \sqrt{\nu \lambda} < \tau$, then
    $\theta_0 < \theta_1$, $\theta' < \theta$, and
    \begin{equation*}
        \deriv F(z)(H_\theta) \subset H_{\theta'},
        \quad \forall z \in D,
        \; \forall \theta \in (\theta_0, \theta_1).
    \end{equation*}
    If furthermore
    $\mu > (\tau + 2\lambda) / (\tau - 2\lambda\nu)$,
    then $\theta_2 > \theta_0$ and
    \begin{equation*}
        \frac{\norm*{\deriv F(z)w}}{\norm w}
        \geq
        \mu \frac{1 - \nu\theta}{1 + \theta} > 1,
        \quad \forall z \in D,\; \forall w\in H_\theta\setminus\{0\},
        \; \forall \theta < \theta_2.
    \end{equation*}
\end{lemma}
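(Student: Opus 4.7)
The plan is to compute $DF(z)w$ componentwise for $w=(u,v) \in H_\theta$, use the four bounds of \eqref{pointwise-deriv-bound} to estimate the two components of the image separately, and then divide to control the slope of the image. Write $DF(z)w = (\xi', \eta')$ where $\xi' = \deriv_x\xi(z)u + \deriv_y\xi(z)v$ and $\eta' = \deriv_x\eta(z)u + \deriv_y\eta(z)v$. Since $\norm{v} \leq \theta\norm{u}$, the second bound of \eqref{pointwise-deriv-bound} gives $\norm{\deriv_y\xi(z)v} \leq \nu\theta\norm{\deriv_x\xi(z)u}$, so the reverse triangle inequality yields $\norm{\xi'} \geq (1-\nu\theta)\norm{\deriv_x\xi(z)u}$; likewise the third and fourth bounds produce $\norm{\eta'} \leq (\lambda+(1-\tau)\theta)\norm{\deriv_x\xi(z)u}$. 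Dividing gives the slope bound $\norm{\eta'}/\norm{\xi'} \leq \theta'$, which is exactly the defining expression for $\theta'$.

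Next I would reduce the contraction $\theta' < \theta$ to the quadratic inequality
\begin{equation*}
    p(\theta) := \nu\theta^2 - \tau\theta + \lambda < 0.
\end{equation*}
The discriminant is $\tau^2 - 4\nu\lambda$, which is strictly positive precisely under the hypothesis $2\sqrt{\nu\lambda} < \tau$, so $p$ has two real roots $\theta_-<\theta_+$ and $p(\theta)<0$ exactly for $\theta \in (\theta_-, \theta_+)$. Rather than writing $\theta_\pm$ directly, I would simply substitute $\theta = \theta_0 = 2\lambda/\tau$ and $\theta = \theta_1 = \tau/\nu - 2\lambda/\tau$ into $p$ and verify by an easy calculation that both give $\lambda(4\nu\lambda/\tau^2 - 1)<0$. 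This locates $\theta_0,\theta_1$ inside $(\theta_-,\theta_+)$; the inequality $\theta_0<\theta_1$ is then just the rearrangement $4\nu\lambda<\tau^2$ of the discriminant condition. Consequently $p(\theta)<0$ for every $\theta \in (\theta_0,\theta_1)$, proving $\theta' < \theta$ on that range, and monotonicity of $\theta \mapsto \theta'$ then yields the cone inclusion $\deriv F(z)(H_\theta) \subset H_{\theta'}$.

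For the second half, I would work with the sum norm $\norm{(u,v)} = \norm{u}+\norm{v}$ on $X\times Y$, which in the cone satisfies $\norm{w} \leq (1+\theta)\norm{u}$. Combining this with the lower bound on $\norm{\xi'}$ from above and the first bound of \eqref{pointwise-deriv-bound} (giving $\norm{\deriv_x\xi(z)u} \geq \mu\norm{u}$) yields
\begin{equation*}
    \frac{\norm{\deriv F(z)w}}{\norm{w}}
    \geq \frac{\norm{\xi'}}{\norm{w}}
    \geq \mu\,\frac{1-\nu\theta}{1+\theta}.
\end{equation*}
The threshold for this ratio to exceed $1$ rearranges as $\theta(1+\mu\nu) < \mu - 1$, i.e.\ $\theta < \theta_2$. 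Finally, the inequality $\theta_2 > \theta_0$ rearranges directly to $\mu(\tau-2\lambda\nu) > \tau+2\lambda$, which is the second hypothesis (and implicitly requires $\tau>2\lambda\nu$, a strengthening of $\tau > 2\sqrt{\nu\lambda}$ when $\nu<1$).

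There is no real obstacle here beyond bookkeeping; the content of the lemma is just the standard graph-transform slope calculation, specialised to the pointwise bounds \eqref{pointwise-deriv-bound}. The only slightly delicate point is the choice of norm on $X\times Y$: the $(1+\theta)$ denominator forces the sum norm, and one must be careful that this is the norm implicit in the eventual graph transform in the local unstable manifold construction which will follow.
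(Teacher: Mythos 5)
Your proof is correct and follows essentially the same route as the paper: the same slope and norm estimates from \eqref{pointwise-deriv-bound}, the same reduction to the quadratic $\nu\theta^2-\tau\theta+\lambda$, and the same direct rearrangement for $\theta_2>\theta_0$. The only cosmetic difference is that the paper bounds the two fixed points $\theta_\pm$ of $\theta\mapsto\theta'$ explicitly via $\sqrt{1-t}\geq 1-t$ whereas you evaluate the quadratic at $\theta_0$ and $\theta_1$ directly; both are routine ways to locate $(\theta_0,\theta_1)$ inside $(\theta_-,\theta_+)$.
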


\begin{proof}
    Let $w = (u,v) \in H_\theta$ and let $w' = (u',v') = \deriv F(z)w$.
    Then
    \begin{equation*}
        \frac{\norm{v'}}{\norm{u'}}
        =
        \frac{\norm{\deriv_x\eta(z)u + \deriv_y\eta(z)v}}%
            {\norm{\deriv_x\xi(z)u + \deriv_y\xi(z)v}}
        \leq
        \frac{\lambda + (1 - \tau)\theta}{1 - \nu \theta}
        = g(\theta).
    \end{equation*}
    Assume without loss of generality that $\nu > 0$.
    Solving $g(\theta) = \theta$ and using $4\nu\lambda < \tau^2$ gives
    two fixed points $\theta_\pm > 0$ and $g(\theta) < \theta$ for
    $\theta_- < \theta < \theta_+$.
    Use $\sqrt{1-t} \geq 1 - t$ for $t = 4\lambda\nu\inv[2]\tau$ to see
    that
    $\theta_- \leq \theta_0 < \theta_1 \leq \theta_+$.
    This proves invariance.

    Expansion follows from the assumption $\theta < (\mu - 1) / (\mu\nu + 1)$
    and
    \begin{equation*}
        \frac{\norm{w'}}{\norm w}
        \geq
        \frac{\norm{u'}}{\norm u (1 + \theta)}
        \geq
        \frac{\mu - \mu \nu \theta}{1 + \theta} > 1.
    \end{equation*}
    Insert $\mu > (\tau + 2\lambda) / (\tau - 2\lambda\nu)$ into the definition
    of $\theta_2$ to see that $\theta_2 > \theta_0$.
\end{proof}

\begin{lemma}[Local unstable manifold] \label{loc-unstable-mfd}
    Under the assumptions of Lemma~\ref{cone-field},
    $F$~has a local unstable manifold at~$0$ which:
    \begin{enumerate}
        \item is the graph of a $\theta_0$--Lipschitz map
            $\gamma^*: U \to Y$, where $U \subset X$ is a ball around~$0$,
            \label{loc-unstable-mfd-graph}
        \item is unique in the sense that if
            $F(W) \supset W$, where $W \ni 0$ is the graph of a
            $\theta$--Lipschitz map on $U$ and
            $\theta < \min\{\theta_1,\theta_2,\theta_3\}$,
            where $\theta_3 = \tau (2\nu)^{-1}$,
            then $W = \graph\gamma^*$.
            \label{loc-unstable-mfd-unique}
    \end{enumerate}
\end{lemma}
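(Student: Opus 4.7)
I will adapt the classical graph transform to our setup, where the derivative bounds~\eqref{pointwise-deriv-bound} are pointwise in the test vector and $F$ is not a diffeomorphism. Fix $\theta_0 = 2\lambda/\tau$ and choose $r > 0$ so small that $\overline{B_X(0,r)} \times \overline{B_Y(0,\theta_0 r)} \subset D$. Let $U = B_X(0,r)$ and let $\Gamma$ be the complete metric space of $\theta_0$--Lipschitz maps $\gamma: U \to Y$ with $\gamma(0) = 0$ and $\|\gamma\|_{C^0} \leq \theta_0 r$, equipped with the $C^0$--metric. For $\gamma \in \Gamma$ define $\Xi_\gamma(x) = \xi(x, \gamma(x))$. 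Using~\eqref{pointwise-deriv-bound} together with the $\theta_0$--Lipschitz property of $\gamma$,
\begin{equation*}
    \|\deriv \Xi_\gamma(x) u\| \geq \|\deriv_x \xi(z) u\| - \|\deriv_y \xi(z)\|\,\theta_0\|u\| \geq (1-\nu\theta_0)\|\deriv_x \xi(z) u\| \geq \mu(1-\nu\theta_0)\|u\|.
\end{equation*}
Under the hypotheses of Lemma~\ref{cone-field} one has $\nu\theta_0 < 1$, and its second (expansion) assumption upgrades this to $\mu(1-\nu\theta_0) > 1$. Hence $\Xi_\gamma$ is a bi-Lipschitz embedding; by shrinking $r$ if necessary, one arranges $\Xi_\gamma(U) \supset U$. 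Define the graph transform $\mathcal G: \Gamma \to \Gamma$ by
\begin{equation*}
    \mathcal G(\gamma)(x') = \eta\bigl(\Xi_\gamma^{-1}(x'),\, \gamma(\Xi_\gamma^{-1}(x'))\bigr), \qquad x' \in U.
\end{equation*}

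Cone invariance (Lemma~\ref{cone-field}) applied pointwise to $\deriv F$ along $\graph\gamma$ shows that $\mathcal G(\gamma)$ is $\theta'$--Lipschitz with $\theta' < \theta_0$; a straightforward vertical estimate gives $\|\mathcal G(\gamma)\|_{C^0} \leq \theta_0 r$; hence $\mathcal G(\Gamma) \subset \Gamma$. A direct computation, combining the expansion of $\Xi_\gamma$ (which contracts $x'$--preimages) with the fourth bound of~\eqref{pointwise-deriv-bound} (which controls vertical spreading by $1-\tau$), shows that $\mathcal G$ is a strict contraction in the $C^0$--metric. Banach's fixed point theorem produces a unique $\gamma^\star \in \Gamma$, and $\mathcal G(\gamma^\star) = \gamma^\star$ is equivalent to $F(\graph\gamma^\star) \supset \graph\gamma^\star$ over $U$, proving~\ref{loc-unstable-mfd-graph}.

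For~\ref{loc-unstable-mfd-unique}, suppose $W = \graph\gamma$ is $\theta$--Lipschitz with $\theta < \min\{\theta_1, \theta_2, \theta_3\}$ and $F(W) \supset W$. The bound $\theta < \theta_3 = \tau/(2\nu)$ ensures that $\Xi_\gamma$ is still an expanding bi-Lipschitz embedding covering $U$, so the graph transform extends to the larger space of $\theta$--Lipschitz graphs, and the hypothesis $F(W) \supset W$ forces $\mathcal G(\gamma) = \gamma$. By cone invariance applied iteratively, the Lipschitz constant of $\mathcal G^n(\gamma) = \gamma$ decreases monotonically below $\theta_0$ for $n$ large enough (since the map $\theta \mapsto (\lambda + (1-\tau)\theta)/(1-\nu\theta)$ in Lemma~\ref{cone-field} has attracting fixed point $\theta_- \leq \theta_0$ on $(\theta_-, \theta_+)$). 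Therefore $\gamma \in \Gamma$ and uniqueness of the fixed point in $\Gamma$ gives $\gamma = \gamma^\star$. The main obstacle is managing the pointwise-in-$u$ nature of~\eqref{pointwise-deriv-bound}: one cannot replace the quantity $\|\deriv_x\xi(z) u\|/\|u\|$ by the operator norm $\|\deriv_x\xi(z)\|$, so every expansion and contraction estimate must be verified vectorwise along a specific test direction, which is precisely the reason one has to depart from the textbook argument of~\cite{KH95}*{Theorem~6.2.8}.
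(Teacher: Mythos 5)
Your graph-transform construction and uniqueness outline are in the spirit of the paper, but there is a genuine gap in the contraction step. You assert that $\mathcal G$ is a strict contraction in the $C^0$\emph{-metric}, citing ``the fourth bound of \eqref{pointwise-deriv-bound} (which controls vertical spreading by $1-\tau$).'' That is not what the fourth bound says: it says $\norm{\deriv_y\eta(z)} \leq (1-\tau)\,\inf_{u\neq 0}\norm{\deriv_x\xi(z)u}/\norm u$, i.e.\ the vertical spreading is bounded by $(1-\tau)$ \emph{times the minimal horizontal expansion at $z$}, which is itself $\geq\mu>1$ and has no upper bound in the hypotheses. Consequently $\norm{\deriv_y\eta(z)}$ can be as large as $(1-\tau)\cdot m(\deriv_x\xi(z))$, and the per-point bound you would get in the $C^0$-metric is $\norm{\gamma_1'(x')-\gamma_2'(x')} \leq m(\deriv_x\xi(z))\,(1-\tau+\nu\theta)\,\norm{\gamma_1(x)-\gamma_2(x)}$, with a prefactor that need not be less than $1$. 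The same issue appears when you argue that iterating cone invariance drives a $\theta$-Lipschitz graph below $\theta_0$ ``for $n$ large'': that decrease follows from the cone estimates on slopes, but it does not rescue the $C^0$ contraction.

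The paper circumvents this precisely by using the \emph{weighted} metric $d(\gamma_1,\gamma_2)=\sup_{x\in U}\norm{\gamma_1(x)-\gamma_2(x)}/\norm{x}$ on graphs fixing $0$. Under $\mathcal G$, the image point satisfies $\norm{x'}=\norm{\xi(x,\gamma(x))}\gtrsim m(\deriv_x\xi(z))\,(1-\nu\theta)\norm x$, so the same factor $m(\deriv_x\xi(z))$ appearing in the upper bound on $\norm{\deriv_y\eta}$ cancels in the ratio, giving a contraction factor $(1-\tau+\nu\theta)/(1-\nu\theta-\eps/\mu)<1$ independent of the horizontal expansion rate. Your construction of $\Xi_\gamma$, the invariance of the space of $\theta_0$-Lipschitz graphs, and the structure of the uniqueness argument (covering, transversality via $\theta<\theta_3$) are all fine; you should simply replace the $C^0$-metric by the weighted metric and redo the contraction estimate there. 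As written, the ``direct computation'' you invoke does not close.
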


\begin{proof}
    Let $U\subset X$ be a closed ball around $0$ and define $G$ to be
    the set of $\theta$--Lipschitz maps $\gamma: U \to Y$ fixing~$0$,
    where $\theta_0 < \theta < \min\{\theta_1,\theta_2\}$.
    Assume that $U$ is small enough so that $\graph\gamma \subset D$,
    $\forall \gamma \in G$.
    The graph transform $\Gamma: G \to G$ is defined by sending $\gamma$ to
    $\gamma'$ where
    \begin{equation*}
        \graph \gamma' = F(\graph \gamma) \cap (U\times Y).
    \end{equation*}
    We claim that $\Gamma$ is well-defined for $U$ small enough.
    To see that the right-hand side is the graph of some $\gamma': U \to Y$ we
    need only show that $x \mapsto \xi(x,\gamma(x))$ maps some $U$ injectively
    over itself, since $F(x,\gamma(x)) = (\xi(x,\gamma(x)), \dots)$.
    In order to so, note that by the smoothness of $\xi$, $\forall\eps > 0$ we
    may choose $U$ so that
    $\norm{\xi(x,0)} \geq \norm{\deriv_x\xi(0)x} - \eps\norm x$.
    This and the cone expansion constant of Lemma~\ref{cone-field} shows that
    \begin{equation} \label{loc-unstable-mfd-expansion}
        \begin{aligned}
            \norm*{\xi(x,\gamma(x))}
            &\geq
            \norm{\xi(x,0)} - \norm*{\xi(x,\gamma(x)) - \xi(x,0)}
            \geq (\mu - \eps)\norm x - \mu\nu \norm{\gamma(x)}
            \\
            &\geq (\mu(1 - \nu\theta) - \eps) \norm x
            > (1 + \theta - \eps) \norm x.
        \end{aligned}
    \end{equation}
    But $1 + \theta - \eps > 1$ for $\eps < \theta$, proving that
    $x \mapsto \xi(x,\gamma(x))$ maps some $U$ injectively over itself.
    The cone invariance of Lemma~\ref{cone-field} implies that $\gamma'$ is
    $\theta$--Lipschitz.

    The set $G$ is turned into a complete metric space by endowing it with the
    metric
    \begin{equation*}
        d(\gamma_1,\gamma_2)
        =
        \sup_{x\in U} \frac{\norm*{\gamma_1(x) - \gamma_2(x)}}{\norm x}.
    \end{equation*}
    The completeness of $Y$ and compactness of $U$ ensures that $G$ is
    complete.
    As a remark, the interpretation of this metric is that if $\rho =
    d(\gamma_1,\gamma_2)$, then $\rho$ is the smallest number such that the
    graph of $\gamma_1 - \gamma_2$ is contained in $H_\rho$.
    Choose $\theta < \theta_3$.
    This is possible, since $\theta_0/\theta_3 = 4\lambda\nu\tau^{-2} < 1$ by
    assumption.
    We claim that $\Gamma$ is a contraction in this metric with this choice of
    $\theta$ and $U$ small enough.
    To prove the claim let $\gamma_i \in G$ and let $\gamma_i' =
    \Gamma(\gamma_i)$, for $i=1,2$.
    By definition
    $(x_i', \gamma_i'(x_i')) = (\xi(x,\gamma_i(x)), \eta(x, \gamma_i(x)))$.
    By \eqref{loc-unstable-mfd-expansion} and the bounds on
    $\norm{\deriv_y\xi(z)}$ and $\norm{\deriv_y\eta(z)}$:
    \begin{align*}
        d(\gamma_1', \gamma_2')
        &=
        \sup_{x'} \frac{\norm*{\gamma'_1(x') - \gamma'_2(x')}}{\norm{x'}}
        \\
        &=
        \!\!\!\!
        \sup_{\xi(x,\gamma_1(x))}
        \!\!\!\!
            \frac{\norm*{\eta(x,\gamma_1(x)) - \eta(x,\gamma_2(x)) +
                    \gamma'_2(\xi(x,\gamma_2(x))) -
                    \gamma'_2(\xi(x,\gamma_1(x)))}}%
                {\norm*{\xi(x,\gamma_1(x))}}
        \\
        &\leq
        \sup_{x}
            \frac{\norm*{\eta(x,\gamma_1(x)) - \eta(x,\gamma_2(x))} +
                    \theta \norm*{\xi(x,\gamma_2(x)) - \xi(x,\gamma_1(x))}
                }{(\mu(1 - \nu\theta) - \eps) \norm x}
        \\
        &\leq
        \sup_x
        \frac{(\mu(1 - \tau) + \mu \nu \theta)
                \norm{\gamma_1(x) - \gamma_2(x)}}%
                {(\mu(1 - \nu\theta) - \eps) \norm x}
        \leq
        \frac{1 - \tau + \nu \theta}{1 - \nu\theta - \eps/\mu}
            d(\gamma_1, \gamma_2).
    \end{align*}
    Using $\nu\theta < \tau/2$ we see that the factor in front of
    $d(\gamma_1,\gamma_2)$ is smaller than~$1$ for $\eps$ small enough.
    Hence we may choose $U$ small so that $\Gamma$ is a contraction.

    Since $\Gamma$ is a contraction it has a unique fixed point
    $\gamma^* \in G$ by the Contraction Mapping Theorem.
    By construction the graph of $\gamma^*$ is an invariant manifold of $F$
    and it is unstable by the cone expansion of Lemma~\ref{cone-field}.
    We may choose $\theta$ arbitrarily close to $\theta_0$, so
    $\gamma^*$ is $\theta_0$--Lipschitz.
    This proves property~\ref{loc-unstable-mfd-graph}.
    Property~\ref{loc-unstable-mfd-unique} is a consequence of $\Gamma$ having
    a unique fixed point.
\end{proof}

\subsection{Global unstable manifolds}

We now apply \sref{cone-fields-and-local-unstable-manifolds} to get local
unstable manifolds of $\renorm$; these are then iterated to get global unstable
manifolds.
As the results below show these global manifolds are not complicated;
in fact, they are still graphs if we cut off maps whose renormalization has a
branch which is almost trivial.
They also show that the unstable manifold is at least three-dimensional and
that it contains a two-dimensional strong unstable manifold which is a full
family.

We need some notation before stating the theorems.
Let $\delta_b = 1/b^2$, let $f_{a,b}^\star$ denote an
$(a,b)$--fixed point (which exists for $a$ and $b$ large, by
Theorem~\ref{fixed-points}) and let $\sdiff_{\delta_b} \subset \sdiff$ denote
the ball of radius~$\delta_b$.
Note that we may assume $f_{a,b}^\star \in \lorenz_{\delta_b}$ by
Proposition~\ref{distortion-invariance}.
Let $Q$ and $\Dl_b$ be defined as in Lemma~\ref{Db}.
Choose $\Delta \subset (0,1)$ so that $c(\renorm f) \not\in \Delta$ if
$c(f) \in \partial\Delta_b$ (this is possible by
Lemma~\ref{Delta-b}(\ref{Delta-b-2})).

\begin{theorem} \label{2d-mfd}
    For every $\beta \in \posrationals$ there exist $N < \infty$, $K < \infty$,
    $\lambda < 1$ and $\theta > 0$ such that for every $b \geq N$ and
    $a/b = \beta$, $f_{a,b}^\star$ has a two-dimensional strong unstable
    manifold $\Wuu$ with the following properties:
    \begin{enumerate}
        \item $\Wuu$ is the graph of a $\theta$--Lipschitz map
            $\gamma_b^{\text{uu}}: Q \to \Delta_b \times \sdiff_{\delta_b} \times \sdiff_{\delta_b}$,
            \label{2d-mfd-graph}
        \item $\Wuu$ is unique in the sense that if
            $\gamma: Q \to \Delta_b\times \sdiff_{\delta_b} \times \sdiff_{\delta_b}$
            is \mbox{$\theta$--Lipschitz},
            $\renorm(\graph\gamma \cap \Dl_b) \supset \graph\gamma$
            and $f_{a,b}^\star \in \graph\gamma$, then
            $\gamma = \gamma_b^{\text{uu}}$,
            \label{2d-mfd-uniqueness}
        \item $\inv\renorm:\Wuu\to\Wuu$ is well-defined and
            $\norm{\renorm^{-n} f - f_{a,b}^\star} \leq K \lambda^n$,
            $\forall f \in \Wuu$, $\forall n \geq 0$,
            \label{2d-mfd-expansion}
        \item $f_{a,b}^\star$ is the unique representative of its topological class
            in families of the above type; that is, if
            $\gamma: Q \to \Delta_b\times \sdiff_{\delta_b} \times \sdiff_{\delta_b}$
            is any $\theta$--Lipschitz map whose graph contains $f_{a,b}^\star$,
            then the graph of $\gamma$ does not contain any other infinitely
            $(a,b)$--renormalizable maps,
            \label{2d-mfd-representative}
        \item $\Wuu$ extends to a $2$--dim unstable manifold which is a full
            family.
            \label{2d-mfd-full}
    \end{enumerate}
\end{theorem}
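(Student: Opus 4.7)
The plan is to work in the space of pure internal structures, where $\renorm$ is differentiable by Theorem~\ref{R-diff}, and to construct $\Wuu$ in two stages: first the $3$-dimensional unstable manifold $\wu$ at $f_{a,b}^\star$, then $\Wuu$ as a $2$-dimensional invariant submanifold of $\wu$ projecting as a Lipschitz graph over $(u,v)\in Q$. Computing $D\renorm$ at $f_{a,b}^\star$ from Lemmas~\ref{DR3D} and~\ref{partials} shows that the internal-structure coordinates contract at rate $o(b^{-n})$ while the three $(u,v,c)$-eigenvalues split into three well-separated scales $\asymp b/\abs{C}$, $\asymp 1/\abs{C}$, $\asymp 1/(b\abs{C})$. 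The $O(b)$ coupling visible in the third column of the matrix in Lemma~\ref{DR3D} forces the leading eigenvector of $D\renorm|_{(u,v,c)}$ to lie close to the $c$-axis, so the unique $2$-dimensional invariant subspace of $D\renorm$ on $T_{f_{a,b}^\star}\wu$ projecting isomorphically onto the $(u,v)$-plane is the span of the middle and slowest eigenvectors, separated from the leading direction by a spectral gap of order~$b$.

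Stage~$1$ proceeds by a direct application of Lemma~\ref{loc-unstable-mfd} with $X=\reals^3$ the $(u,v,c)$-coordinates and $Y$ the internal-structure space: by Proposition~\ref{bound-internal-structure} and Lemma~\ref{partials}, the contraction $o(b^{-n})$ of internal structures is strictly faster than the slowest $X$-expansion $\asymp 1/(b\abs{C})$, so the pointwise bounds~\eqref{pointwise-deriv-bound} are comfortably satisfied and $\wu$ emerges as a $\theta_0$-Lipschitz graph. Stage~$2$ constructs $\Wuu$ inside $\wu$ after the linear change of variables $(\tilde u,\tilde v,\tilde c)=(u,v,c+\alpha u+\beta v)$, with $(\alpha,\beta,1)$ proportional to the cross product of the first two columns of the matrix $W$ of Lemma~\ref{DR3D}, chosen so that the $(\tilde u,\tilde v)$-plane becomes invariant under $D\renorm|_\wu$. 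Because the complementary $\tilde c$-axis expands $\asymp b$ times faster than every direction in the $(\tilde u,\tilde v)$-plane, $\Wuu$ is not a classical unstable manifold; instead I construct $\Wuu_{\mathrm{loc}}$ by a contraction mapping for the functional invariance equation $\gamma(u',v')=\tilde c'\bigl(u,v,\gamma(u,v)\bigr)$, where $(u',v',\tilde c')=\renorm|_\wu(u,v,\gamma(u,v))$ and $(u,v)$ is recovered from $(u',v')$ by inverting the diffeomorphism $\pi_{uv}\circ\renorm|_{\Dl_b\cap\family_{\nu_0}}$ of Lemma~\ref{Db}; the contraction comes from the order-$b$ spectral gap. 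This yields a unique Lipschitz $\gamma_b^{\mathrm{uu}}:Q\to\reals$; composing with the change of variables and the graph of $\wu$ over $(u,v,c)$ gives the desired $\gamma_b^{\mathrm{uu}}:Q\to\Delta_b\times\sdiff_{\delta_b}\times\sdiff_{\delta_b}$, establishing~\ref{2d-mfd-graph} and~\ref{2d-mfd-uniqueness}; property~\ref{2d-mfd-expansion} then follows by iterating the spectral gap backward.

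For~\ref{2d-mfd-representative}, any second infinitely $(a,b)$-renormalizable map on a $\theta$-Lipschitz graph through $f_{a,b}^\star$ is forced to lie on $\Wuu$ by the uniqueness of the contraction-mapping fixed point, after which a kneading argument (distinct $(u,v)\in Q$ give distinct topological combinatorics on $\Wuu$) rules out a second infinitely renormalizable map. For~\ref{2d-mfd-full}, $\gamma_b^{\mathrm{uu}}$ extends continuously beyond $Q$ across the loci where one branch of $\renorm$ becomes full or trivial, giving a $2$-dimensional family containing a full Lorenz map at the corner $(u,v)=(1,1)$; topological fullness then follows from a full-/trivial-vertex analysis parallel to the one in Lemma~\ref{Db}, using \cite{MdM01}*{Prop.~6.1}. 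The main obstacle is Stage~$2$: because the transverse direction expands \emph{faster} than every direction tangent to $\Wuu$, the classical unstable-manifold lemma does not directly apply, and one must run the direct contraction-mapping argument above, whose success hinges on isolating the order-$b$ spectral gap via the change of variables.
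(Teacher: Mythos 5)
Your spectral picture is reversed, and this vitiates the entire ``Stage~2'' of your argument.  You assert that the $c$--direction expands $\asymp b$ times \emph{faster} than the $(u,v)$--directions, and you therefore construct $\Wuu$ as a \emph{slow} unstable submanifold inside the $3$--dimensional unstable manifold $\wu$, via a functional invariance equation whose contraction is supposed to come from an ``order--$b$ spectral gap.''  In fact the opposite holds: by Lemma~\ref{DR3D} the $u$-- and $v$--derivatives of $\renorm$ are $\asymp 1/\abs U$ and $\asymp 1/\abs V$, while the $c$--derivative is only $\asymp b/\abs C$ (the $A w_0 + B w_1$ terms contribute the factor of $b$).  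Crucially $\abs C/\abs U$ and $\abs C/\abs V$ are \emph{exponentially} large in $b$ (they are the derivatives of the first-entry maps to~$C$; see Proposition~\ref{distortion-invariance} and the Expansion Lemma), so $1/\abs U, 1/\abs V \gg b/\abs C$.  Hence the $(u,v)$--directions dominate by an exponential margin, not the $c$--direction.  Consequently $\Wuu$ is a \emph{classical} strong unstable manifold in the usual sense (tangent to the two fastest directions), and the paper builds it by a direct cone-field argument — Lemmas~\ref{cone-field} and~\ref{loc-unstable-mfd} with $X=\reals^2$ the $(u,v)$--coordinates and $Y = \reals\times E_b$ the $c$ plus internal-structure coordinates — with $\mu \geq \rho^b$, $\nu,1-\tau \leq \sigma^b$, $\lambda \leq K^2$.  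No slow-unstable construction, no change of variables $(\tilde u,\tilde v,\tilde c)$, and no bespoke functional equation is needed; the ``classical unstable-manifold lemma'' applies directly once one splits the coordinates correctly.  Your Stage~1 (building the $3$--dimensional $\wu$ first) is essentially the argument the paper uses for Theorem~\ref{3d-mfd}, not for this theorem, and is superfluous here.

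Two secondary discrepancies.  For property~\ref{2d-mfd-representative} you invoke a ``kneading argument'' (distinct $(u,v)$ give distinct combinatorics), which is not how the paper proceeds: the paper deforms $\gamma$ to agree with $W_{\mathrm{loc}}$ near $f^\star_{a,b}$, shows via step~3 that an infinitely $(a,b)$--renormalizable map on the graph must lie in $\Wuu$, and then uses the backward contraction $\norm{\renorm^{-n}f - f^\star_{a,b}} \leq K\lambda^n$ of property~\ref{2d-mfd-expansion} to force $f=f^\star_{a,b}$; no kneading theory enters.  For property~\ref{2d-mfd-full}, the paper works inside the $3$--dimensional manifold $\gamma_b^{\mathrm u}$, produces a full island $I\subset Q$ there (via \cite{MdM01}*{Prop.~2.1}), and then observes $\renorm(\graph\gamma_b^{\mathrm{uu}}|_I)$ is a full family extending $\Wuu$ — which is close in spirit to your sketch but differs in the order of extension.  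The decisive issue, however, is the reversed spectral order: fix that first, and most of the remaining steps collapse to a fairly standard invariant-manifold argument.
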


\begin{theorem} \label{3d-mfd}
    For every $\beta \in \posrationals$ there exist $N < \infty$, $K < \infty$ and
    $\lambda < 1$ such that the following holds.
    For every $b \geq N$ and $a/b = \beta$, there exists $\theta_b > 0$ such that
    $f_{a,b}^\star$ has a three-dimensional unstable manifold $\Wu$ with the
    following properties:
    \begin{enumerate}
        \item $\Wu$ is the graph of a $\theta_b$--Lipschitz map
            $\gamma_b^{\text u}: Q\times\Delta \to \sdiff_{\delta_b} \times \sdiff_{\delta_b}$,
            \label{3d-mfd-graph}
        \item $\Wu$ is unique in the sense that
            if
            $\gamma: Q\times\Delta \to \sdiff_{\delta_b} \times \sdiff_{\delta_b}$
            is $\theta_b$--Lipschitz
            and $\renorm(\graph\gamma \cap \Dl_b) \supset \graph\gamma$, then
            $\gamma = \gamma_b^{\text u}$,
        \item $\inv\renorm:\Wu\to\Wu$ is well-defined and
            $\norm{\renorm^{-n} f - f_{a,b}^\star} \leq K \lambda^n$,
            $\forall f \in \Wu$, $\forall n \geq 0$,
            \label{3d-mfd-expansion}
        \item any neighborhood of $f_{a,b}^\star$ in $\Wu$ contains
            an infinitely $(a,b)$--renormalizable map $f \neq f_{a,b}^\star$.
    \end{enumerate}
\end{theorem}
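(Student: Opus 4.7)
The plan is to apply the local unstable manifold lemma (Lemma~\ref{loc-unstable-mfd}) in the space of pure internal structures and then globalize by iterating under $\renorm$. We work in the splitting $X \oplus Y$, with $X = \reals^3$ carrying the coordinates $(u,v,c)$ and $Y = \lpure{T_-} \oplus \lpure{T_+}$ the pure internal structures of $\phi_\pm$; composition with $\compose$ then transfers the resulting manifold into the ambient $\sdiff_{\delta_b} \times \sdiff_{\delta_b}$ of the statement. By Theorem~\ref{R-diff}, $\renorm$ is differentiable on this space, and by Theorem~\ref{fixed-points} together with Proposition~\ref{bound-internal-structure} the fixed point $f_{a,b}^\star$ has pure internal structure with small norm.

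The technical core is to verify the four derivative bounds \eqref{pointwise-deriv-bound} in a neighborhood of $f_{a,b}^\star$ in this splitting. The estimates of Lemma~\ref{DR3D} give explicit expressions for the columns of $\deriv_x\xi$ in terms of the matrix $W$ and the corrections $A,B$; a careful asymptotic analysis (using $\lambda_0',\lambda_1' \to \infty$ from the Expansion Lemma and Proposition~\ref{distortion-invariance}, together with $c^\star \in \Delta$) shows that the minimum expansion constant $\mu$ of $\deriv_x\xi$ grows without bound as $b \to \infty$. Lemma~\ref{partials} supplies the remaining bounds: in the appropriate $\ell^1$ operator norms, $\norm{\deriv_y\eta}$ and $\norm{\deriv_x\eta}$ are $o(b^{-n})\abs{C}^{-1}$, whereas $\norm{\deriv_y\xi}$ is $O(b)\abs{C}^{-1}$. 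Comparing these with $\mu$ yields $\lambda = o(b^{-n})$, $1-\tau = o(b^{-n})$ and $\nu = O(b)$, so both $2\sqrt{\nu\lambda} < \tau$ and $\mu > (\tau+2\lambda)/(\tau - 2\lambda\nu)$ hold for $b \geq N$. Lemmas~\ref{cone-field} and~\ref{loc-unstable-mfd} then produce a local unstable manifold at $f_{a,b}^\star$ as the graph of a $\theta_b$--Lipschitz map on a small ball in $X$, with $\theta_b \to 0$ as $b\to\infty$.

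To globalize, iterate the local piece forward under $\renorm$. Since $\deriv\renorm$ expands along $X$ and contracts along $Y$, each forward $\renorm$--image of the local graph remains a Lipschitz graph over an enlarged subset of $X$, and the uniqueness part of Lemma~\ref{loc-unstable-mfd} ensures these pieces agree on overlap. By Lemma~\ref{Db}, $\pi_{uv}\circ\renorm$ maps each fibre of the standard family diffeomorphically onto $Q$, so the forward iterates fill the $(u,v)$--coordinates over all of $Q$; the expansion in the $c$--direction together with Lemma~\ref{Delta-b} similarly fills out $\Delta$, giving the desired graph over $Q\times\Delta$. Properties~(1), (2), (3) then follow directly: (1) from the graph structure, (2) from the uniqueness of Lemma~\ref{loc-unstable-mfd} propagated through the forward extension, and (3) from the expansion $\mu$ giving $\lambda = 1/\mu$ for the contraction rate of $\inv\renorm$.

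The main obstacle is property~(4): every neighborhood of $f_{a,b}^\star$ in $\Wu$ must contain an infinitely $(a,b)$--renormalizable map other than $f_{a,b}^\star$. The key is that $\Wu$ has an extra expanding direction transverse to $\Wuu$, so that the $3$--dimensional dynamics of $\renorm|_{\Wu}$ has non-trivial recurrent structure. The plan is to take a smooth arc in $\Wu$ through $f_{a,b}^\star$ transverse to $\Wuu$ and to show that the induced $1$--dimensional map is strongly expanding with horseshoe-like structure: combining the expansion in the $c$--direction from Lemma~\ref{DR3D} with the explicit description of $\partial\Dl_b$ in Lemma~\ref{Delta-b} (both endpoints of $\Delta_b$ map to $c'$ values that flip to the opposite side, while the fixed point $c^\star$ is in the interior), one shows that the image of $\Delta_b$ under this $1$--dimensional map properly contains $\Delta_b$ and covers it with the opposite orientation near each endpoint. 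A standard itinerary argument then produces a Cantor set of points along the arc whose forward $\renorm$--orbit remains in $\Dl_b$ indefinitely; these are infinitely $(a,b)$--renormalizable maps in $\Wu$ accumulating on $f_{a,b}^\star$.
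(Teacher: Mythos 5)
Your treatment of properties (1)--(3) follows the same route as the paper: compute the derivative bounds of \eqref{pointwise-deriv-bound} from Lemmas~\ref{DR3D} and~\ref{partials} in the pure internal-structure coordinates, apply Lemmas~\ref{cone-field} and~\ref{loc-unstable-mfd} to get the local unstable manifold, and globalize via the graph transform, then transfer with $\compose$. Two imprecisions there are worth flagging but do not break the argument: the reason $\mu\to\infty$ is that $\abs C\to 0$ exponentially in $b$ (Lemma~\ref{bounded-space}), \emph{not} that $\lambda_0',\lambda_1'\to\infty$ --- in fact $\lambda_0',\lambda_1'$ stay bounded (they converge to $\alpha u'/c'$ and $\alpha v'/(1-c')$); and from the bounds of \eqref{3d-mfd-norms} one gets $\nu=O(b^2)$ rather than $O(b)$. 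Both observations are still swamped by $\lambda,1-\tau=o(b^{-n})$, so the cone-field hypotheses hold.

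The genuine gap is in property~(4). Your plan relies on an ``induced $1$-dimensional map'' on an arc through $f_{a,b}^\star$ transverse to $\Wuu$, but no such map exists: $\renorm$ does not preserve any transverse arc (the $(u,v)$-coordinates are expanded simultaneously), so there is nothing to iterate along the curve. Even at the heuristic level the picture is wrong: Lemma~\ref{Delta-b} shows that the relative-critical-point map $\rho:\tilde\Delta_b\to J$ is an orientation-reversing \emph{diffeomorphism}, i.e.\ an expanding \emph{single} cover, not a map covering $\Delta_b$ twice near the endpoints. An expanding single cover of an interval over itself has a unique recurrent point (the fixed point), so the horseshoe/itinerary argument produces nothing new. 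Furthermore, you implicitly equate ``infinitely $(a,b)$-renormalizable'' with ``$\renorm$-orbit stays in $\Dl_b$ indefinitely''; the latter is sufficient but far too strong --- the infinitely renormalizable maps that the theorem asserts to exist inside $\Wu$ typically \emph{degenerate} (Degeneration Theorem), so their successive renormalizations leave $\Dl_b$ after finitely many steps. What the paper does instead is fix a critical point $c\in\Delta_b$ near $c_b^\star$, restrict $\Wu$ to the $2$-dimensional slice $\gamma_c$ with that critical point, show this slice can be continuously extended to a full family in the sense of \cite{MdM01}, and invoke \cite{MdM01}*{Prop.~2.1} to produce an infinitely $(a,b)$-renormalizable map in the slice; the observation that non-$\bar\Dl_b$ maps are at most once renormalizable then pins it inside $Q$, hence inside $\Wu$. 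This fullness/degree argument is the key idea your proposal is missing.
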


\begin{remark}
    \begin{enumerate*}
        \item We do not prove that there are exactly three unstable eigenvalues
            at $f_{a,b}^\star$; there may be more, but we do not think so.
        \item It is possible to prove that the unstable manifolds are $\Ck1$
            with the techniques used here.
            The key is proving that the composition operator is differentiable
            on pure internal structures.
            In fact, the unstable manifolds are analytic.
            This can be proved by extending the pure maps of the internal
            structures to neighborhoods in $\mathbb C$ and using a holomorphic
            graph transform.
            We omit proofs of these statements for the sake of brevity.
    \end{enumerate*}
\end{remark}

\subsection{Proofs of the unstable manifold theorems}
\label{proofs-of-the-unstable-manifold-theorems}

The proofs rely on the fact that the fixed point is a pure Lorenz map; i.e.\
it has pure internal structures indexed by time sets $T_b^\pm$ which are
uniquely determined by $b$ and $\beta$ (since the combinatorics is stationary).
The composition operator is Lipschitz so there exists a maximal
$S(\delta_b) < \infty$ such that if we define
\begin{equation*}
    E_b
    =
    \left\{
        (\bar s_-,\bar s_+) \in \lpure{T_b^-}\times\lpure{T_b^+} \mid
        \norm{\bar s_\pm} \leq S(\delta_b) \right\},
\end{equation*}
then $\compose\bar s_\pm \in \sdiff_{\delta_b}$,
$\forall (\bar s_-, \bar s_+) \in E_b$.
Define
\begin{equation*}
    \bar\Dl_b
    =
    \{ (u,v,c,\bar s) \in \reals^3\times E_b \mid
        (u,v,c,\compose\bar s_\pm) \in \Dl_b \}.
\end{equation*}
We will write $\bar f = (u,v,c,\bar s)$ to denote pure Lorenz maps in
$\bar\Dl_b$;
$\bar f_b^\star = (u_b^\star,v_b^\star,c_b^\star,\bar s_b^\star)$
denotes the renormalization fixed point.

The idea of the proofs is to construct unstable Lipschitz manifolds inside
the pure space $\reals^3\times E_b$.
By composing the internal structures this gives us manifolds inside $\lorenz$
which are Lipschitz since the composition operator is Lipschitz.
Explicitly, if $\gamma: \reals^3 \to E_b$ is Lipschitz, then $O\circ\gamma:
\reals^3 \to \sdiff\times\sdiff$ is also  Lipschitz, where
$O(\bar s_-,\bar s_+) = (\compose\bar s_-,\compose\bar s_+)$.
So, once we prove that the graph of some $\gamma$ is an unstable manifold in
the pure space, then the graph of $O\circ\gamma$ is an unstable manifold in the
space of Lorenz maps.

The proofs are divided into steps, starting with:
\begin{enumerate*}
    \item proving existence of a local unstable manifold,
    \item showing that the graph transform can be extended,
    \item growing the local unstable manifold to a global manifold.
\end{enumerate*}

\begin{proof}[Proof of Theorem~\ref{2d-mfd}]
\textit{Step 1.}
We will prove the existence of an expanding cone field and a local unstable
manifold.

Write $\renorm(\bar f) = (\xi(\bar f), \eta(\bar f))$ and
$z = (x,y)$, where $\xi(\bar f), x \in \reals^2$
$\eta(\bar f), y \in \reals\times E_b$.
Let $\hat x = (x_1/\abs{U},x_2/\abs{V})$ where $U$ and $V$ are as in
\sref{derivative-estimates}.
We claim that $\forall \bar f \in \bar\Dl_b$
\begin{equation} \label{2d-mfd-norms}
    \begin{aligned}
    \norm{\deriv_{u,v}\xi(\bar f) x} &\geq \frac{\norm{\hat x}}{K},
    &
    \norm{\deriv_{c,\bar s}\xi(\bar f)} &\leq \frac{K b}{\abs{C}},
    \\
    \norm{\deriv_{u,v}\eta(\bar f) x} &\leq K \norm{\hat x},
    &
    \norm{\deriv_{c,\bar s}\eta(\bar f)} &\leq \frac{K b}{\abs{C}}.
    \end{aligned}
\end{equation}
Note first that each of these operators can be thought of as a matrix whose
entries are the partial derivatives we estimated in
\sref{derivative-estimates} and that we are using the $\ell^1$--norm,
hence the operator norm can be bounded by the supremum over all column norms.
Let us prove the claim.

The first column of $\deriv_{c,\bar s}\xi(\bar f)$ is bounded by $Kb/\abs C$ by
Lemma~\ref{DR3D} and the remaining columns have the same bound by
Lemma~\ref{partials}\eqref{partials2}.

Each partial derivative in the first row of $\deriv_{c,\bar s}\eta(\bar f)$ is
bounded by $Kb/\abs C$ by Lemma~\ref{DR3D} (for the first entry) and
Lemma~\ref{partials}\eqref{partials2} (for the remaining entries).
The norm of each column, disregarding the first row, is much smaller than the
entries of the first row by Lemma~\ref{partials}\eqref{partials4}.

The first row of $\deriv_{u,v}\eta(\bar f) x$ has two entries which are bounded
by $K/\abs U$ and $K/\abs V$, respectively, by Lemma~\ref{DR3D}.
Using Lemma~\ref{partials}\eqref{partials3} we get that these are bounds for
the respective norms of the two columns as well.
Use the triangle inequality to get the desired bound on
$\norm{\deriv_{u,v}\eta(\bar f) x}$.

By Lemma~\ref{DR2D}, $\deriv_{u,v}\xi(\bar f) x = \tilde W \hat x$ where
$\det \tilde W \geq \inv K - o(b^{-n})$ and the columns of $\tilde W$ all have
bounded norm.
Lemma~\ref{l1-expansion} gives the desired bound on
$\norm{\deriv_{u,v}\xi(\bar f) x}$.

This concludes the proof of \eqref{2d-mfd-norms}.
We will now show how these bounds give us an invariant expanding cone field and
a local unstable manifold.

We will use \eqref{2d-mfd-norms} to determine constants $\mu$, $\nu$, $\lambda$
and $\tau$ of \eqref{pointwise-deriv-bound}.
Choose $\mu = \inf_{\bar f} \norm{\hat x}/(K \norm x)$.
Note that $\norm{\hat x}/\norm x$ is a linear combination of $\abs U^{-1}$ and
$\abs V^{-1}$, which is at least as large as the smaller of the two.
From this it follows that we may choose $\nu$, and hence $1-\tau$, of the order
$b \max\{\abs U, \abs V\}/\abs C$.
Finally $\lambda \leq K^2$.
From the Expansion Lemma it follows that $\mu \geq \rho^b$, for some $\rho > 1$
not depending on~$\bar f$.
The fractions $\abs C/\abs U$ and $\abs C/\abs V$ are proportional to the
respective derivative of the first-entry map to $C$ (since the first-entries
have bounded distortion by Proposition~\ref{distortion-invariance}) which are
exponentially large in $b$, again by the Expansion Lemma.
Hence $\nu \leq \sigma^b$ for some $\sigma < 1$ not depending on~$\bar f$.

From these bounds on the constants of \eqref{pointwise-deriv-bound} it follows
that the conditions of Lemma~\ref{cone-field} are satisfied by choosing $b$
large.
Furthermore, $\theta_0 \leq K$ so we may choose $\theta$ constant (independent
of $b$) and get an invariant expanding cone field over the standard cone
$H_\theta$.
Since $\nu \ll 1$, the expansion constant is of the order $\mu \gg 1$.

By Lemma~\ref{loc-unstable-mfd} there is a local unstable manifold
$W_\text{loc}$ which is the
graph of a $\theta$--Lipschitz map $\gamma_\text{loc}: B \to (0,1)\times E_b$
for some open set $B \subset Q$ containing $(u_b^\star,v_b^\star)$.

\smallskip
\noindent\textit{Step 2.}
We will now define the graph transform on graphs over $Q$.

Let $G$ be the set of $\theta$--Lipschitz maps
$\gamma: Q \to \Delta_b\times E_b$ such that $\bar f_b^\star \in \graph\gamma$.
We claim that $\graph\gamma \cap \bar\Dl_b$ is mapped diffeomorphically onto $Q$
by $\xi$, for every $\gamma \in G$.
This implies that $\renorm(\graph\gamma\cap\bar\Dl_b)$ is a graph and
from the invariant cone field it follows that it is $\theta$--Lipschitz.
Hence the graph transform on $G$ is well-defined.
We will now prove the claim.

Let $\ell(u,v) = \inv\xi(u,v)$.
By Lemma~\ref{DR2D} every $(u,v) \in Q$ is a regular value of
$\xi$ (since $\det\deriv_{u,v}\xi \neq 0$), so $\ell(u,v)$ is a manifold of
codimension two.
Now pick some arbitrary $\gamma_1 \in G$ and $(u,v) \in Q$.
We will prove the claim by showing that $\ell(u,v)$ meets the graph of
$\gamma_1$ in a unique point.
Let $\gamma_0 \in G$ be the standard two-dimensional family through
$\bar f_b^\star = (u_b^\star, v_b^\star, c_b^\star, \bar s_b^\star)$, i.e.\
$\gamma_0(u,v) = (u, v, c_b^\star, \bar s_b^\star)$.
Because of Lemma~\ref{Db}, the graph of $\gamma_0$ meets $\ell(u,v)$
exactly once.
Homotope $\gamma_0$ to $\gamma_1$ via
$\gamma_t = t \gamma_0 + (1-t)\gamma_1$ and note that $\gamma_t \in G$,
for all $t \in [0,1]$.
Now let us see for how long the intersection persists under this homotopy.
First of all note that any intersection must be transversal;
all tangent vectors of $\ell(u,v)$ lie in the complementary cone, since
$\xi(\ell(u,v))$ is the graph of a constant map
$(0,1)\times E_b \to Q$, and since the complementary cone field is invariant
under $\deriv\inv\renorm$.
Due to transversality there are only two possibilities: either the graph of
$\gamma_t$ contains a boundary point of $\ell(u,v)$ for some $t\in(0,1)$,
or $\ell(u,v)$ meets $\gamma_1$ in a unique point.
We will show that the first option cannot occur.

Let $\gamma \in G$ and consider the projection of the graph of $\gamma$ to
$\Delta_b\times E_b$.
The diameter of this projection is exponentially small in $b$ (since $\theta$
is fixed and $1-u$ and $1-v$ are exponentially small in $b$ as a consequence of
the Expansion Lemma; see its accompanying remark) but the diameter of
$\Delta_b\times E_b$ is at least of the order $1/b^2$ (by
Lemma~\ref{Delta-b}(\ref{Delta-b-4}) and since $\delta_b = 1/b^2$).
Furthermore, $(c_b^\star,\bar s_b^\star)$ is bounded away from the boundary of
$\Delta_b\times E_b$ by Lemma~\ref{Delta-b}(\ref{Delta-b-3}) and since
$\norm{\bar s_b^\star}$ is exponentially small in $b$
(by Lemma~\ref{bounded-space} and Proposition~\ref{bound-internal-structure}).
Hence the graph of $\gamma$ is far away from the boundary of $\ell(u,v)$ for
all $\gamma \in G$.
In particular, it holds for all $\gamma_t$ so $\gamma_1$ must meet $\ell(u,v)$
in a unique point.

\smallskip
\noindent\textit{Step 3.}
We will now grow the local unstable manifold and prove properties
\ref{2d-mfd-graph} to~\ref{2d-mfd-expansion}.

Let $\gamma_0 \in G$ be some map whose graph coincides with the local unstable
manifold on some neighborhood of $\bar f_b^\star$.
That is, $\gamma_0|_B = \gamma_\text{loc}|_B$ for some open neighborhood
$B \subset Q$ of $(u_b^\star, v_b^\star)$.
Let $\gamma_k = \Gamma^k(\gamma_0)$ and $W_k = \graph\gamma_k$, where
$\Gamma: G\to G$ is the graph transform from step~2.
We claim that there exists $n < \infty$ such that
$W_n \subset \renorm^n(W_\text{loc})$.
Since the global unstable manifold at $\bar f_b^\star$ is given by
$\bigcup_{k\geq0}\renorm^k(W_\text{loc})$, this implies that $\Wuu = W_n$ and
hence property~\ref{2d-mfd-graph} follows.

To see why the claim holds, pick an arbitrary $\bar f_n \in W_n$ and an arbitrary
curve $\sigma_n:[0,1]\to W_n$ such that $\sigma_n(0)=\bar f_b^\star$ and
$\sigma_n(1)=\bar f_n$.
Assume without loss of generality that $\sigma_n$ is differentiable.
Let $\sigma_k:[0,1]\to W_k$ be curves in the backward orbit of $\sigma_n$
and let $\abs{\sigma_k}$ denote the length of $\sigma_k$.
By step~1 the standard cone $H_\theta$ is expanded by $\deriv\renorm(\bar f)$ for
all $\bar f \in \bar\Dl_b$, so the tangent vectors along $\sigma_k$ are also expanded
since they lie inside $H_\theta$.
Hence there exists $\lambda < 1$ such that
$\abs{\sigma_k} \leq \lambda \abs{\sigma_{k+1}}$, for $k=0,\dotsc,n-1$.
If we let $\bar f_0 = \sigma_0(1)$, then it follows that
$\norm{\bar f_0 - \bar f_b^\star} \leq \abs{\sigma_0} \leq \lambda^n\abs{\sigma_n}$.
Since $W_n$ is the graph of a Lipschitz function over a bounded domain,
there exists $K < \infty$ (not depending on~$\bar f_n$) such that
$\abs{\sigma_n} \leq K$ for some choice of $\sigma_n$.
But $\bar f_n \in W_n$ was arbitrary, so this shows that
$\norm{\bar f - \bar f_b^\star} \leq K\lambda^n$ for every $\bar f \in \renorm^{-n}(W_n)$.
In particular, there exists $n < \infty$ (depending on $B$) such that
$\renorm^{-n}(W_n) \subset W_\text{loc}$, since
$\gamma_0|_B = \gamma_\text{loc}|_B$.
This concludes the proof of the claim and also proves
property~\ref{2d-mfd-expansion} (injectivity of $\inv\renorm$ follows from the
claim of step~2).

If the graph of $\gamma$ is invariant in the sense of
property~\ref{2d-mfd-uniqueness}, then it must contain $W_\text{loc}$ by
Lemma~\ref{loc-unstable-mfd}.
By the above claim it follows that $\gamma = \gamma_b^\text{uu}$,
which proves property~\ref{2d-mfd-uniqueness}.

\smallskip
\noindent\textit{Step 4.}
We will now prove property~\ref{2d-mfd-representative}.

Let $\gamma \in G$, $W = \graph\gamma$, and assume $\bar f \in W$ is
infinitely $(a,b)$--renormalizable.
Note that the only $(a,b)$--renormalizable maps which are not in $\bar\Dl_b$ can
be at most once renormalizable.
Explicitly, a twice renormalizable map must have $(u',v') \in Q$ by the
Expansion Lemma (for $b$ large enough) which together with Lemma~\ref{Db}
implies $\bar f \in \bar\Dl_b$.
By deforming $\gamma$ (without leaving $G$)\footnote{%
    If $\bar f$ was in the boundary of the cone $H_\theta + \bar f_b^\star$ then
    this deformation would take us outside $G$, but then we could simply make
    $\theta$ slightly larger and $\Gamma$ would still be well-defined.
    }
    we may additionally assume that
$W|_V = W_\text{loc}|_V$ on some neighborhood $V$ of $\bar f_b^\star$.
By step~3, $\renorm^n(W_\text{loc}\cap V) \supset \Wuu$ for some $n < \infty$.
Since $\bar f$ is infinitely renormalizable this means that $\bar f$ must have been
in $W_\text{loc}$ to begin with.
For the same reason, $\renorm^n \bar f \in \Wuu \cap \bar\Dl_b$, $\forall n\geq 0$.
Hence property~\ref{2d-mfd-expansion} implies that
$\norm{\bar f - \bar f_b^\star} \leq K\lambda^n$, $\forall n\geq 0$.
That is, $\bar f = \bar f_b^\star$.

\smallskip
\noindent\textit{Step 5.}
We will now prove property~\ref{2d-mfd-full}.
\nopagebreak

We claim that there is a full island $I\subset Q$.
Hence it is inside the domain of $\gamma_b^\text{u}$ so
$\renorm(\graph\gamma_b^\text{uu}|_I)$ is a full family by
\cite{MdM01}*{Prop.~2.1}; it is also a $2$--dim unstable manifold extending
$\Wuu$.
To prove the claim, note that $\gamma_b^\text{u}$ can be extended to a family
satisfying \cite{MdM01}*{Prop.~2.1} (see step~4 of the proof of
Theorem~\ref{3d-mfd}) and hence it contains a full island $I$; $I \subset Q$
for $b$ large, by the Expansion Lemma.
\end{proof}

\begin{proof}[Proof of Theorem~\ref{3d-mfd}]
\noindent\textit{Step 1.}
We will prove the existence of an expanding cone field and a local unstable
manifold.
The arguments used here are identical to step~1 of the proof of
Theorem~\ref{2d-mfd} but we write them out in detail since the resulting
invariant cone fields are quite different.

Write $\renorm(\bar f) = (\xi(\bar f), \eta(\bar f))$ and
$z = (x,y)$, where $\xi(\bar f), x \in \reals^3$ and
$\eta(\bar f), y \in E_b$.
Let $\hat x = (x_1/\abs U,x_2/\abs V, x_3/\abs C)$ where $U$, $V$ and $C$ are
as in \sref{derivative-estimates}.
We claim that $\forall \bar f \in \bar\Dl_b$
\begin{equation} \label{3d-mfd-norms}
    \begin{aligned}
    \norm{\deriv_{u,v,c}\xi(\bar f) x} &\geq \frac{\norm{\hat x}}{K b},
    &
    \norm{\deriv_{\bar s}\xi(\bar f)} &\leq \frac{K b}{\abs{C}},
    \\
    \norm{\deriv_{u,v,c}\eta(\bar f) x} &\leq \norm{\hat x} o(b^{-n}),
    &
    \norm{\deriv_{\bar s}\eta(\bar f)} &\leq \frac{o(b^{-n})}{\abs{C}}.
    \end{aligned}
\end{equation}
Note first that each of these operators can be thought of as a matrix whose
entries are the partial derivatives we estimated in
\sref{derivative-estimates} and that we are using the $\ell^1$--norm,
hence the operator norm can be bounded by the supremum over all column norms.
Let us prove the claim.

By adding the bounds of Lemma~\ref{partials}\eqref{partials2} we see that
all columns of $\deriv_{\bar s}\xi(\bar f)$ have norm bounded by $Kb/\abs C$,
proving the second inequality of~\eqref{3d-mfd-norms}.

All columns of $\deriv_{\bar s}\eta(\bar f)$ have norm bounded by
$o(b^{-n})/\abs C$
by the second equation of Lemma~\ref{partials}\eqref{partials4},
proving the fourth inequality of~\eqref{3d-mfd-norms}.

The three columns of $\deriv_{u,v,c}\eta(\bar f)$ have norms bounded by
$o(b^{-n})/\abs U$, $o(b^{-n})/\abs V$ and $o(b^{-n})/\abs C$, respectively,
by Lemma~\ref{partials} \eqref{partials3} and the first equation of
\eqref{partials4}.
Apply the triangle inequality to finish the proof of the third inequality
of~\eqref{3d-mfd-norms}.

Let us finally prove the first inequality of \eqref{3d-mfd-norms}.
By Lemma~\ref{DR3D} $\norm{\deriv_{u,v,c}\xi(\bar f) x} = \norm{\tilde W\xi}$ where
$\tilde W = W + (\eps_0, \eps_1, \eps_2 - A w_0 + B w_1)$.
Let $S$ be the area of the largest face of the convex hull of $\tilde W$.
The first two columns of $\tilde W$ have bounded norms whereas the third
has norm of order $O(b)$.  Hence $S \leq K b$.
Since all columns of $W$ have bounded norm and since the determinant is
continuous and alternating
$\det \tilde W = \det W + o(b^{-n})$.
From Lemma~\ref{l1-expansion} we get that
\begin{equation} \label{3d-mfd-det}
    \norm{\deriv_{u,v,c}\xi(\bar f) x} = \norm{\tilde W \xi}
    \geq \frac{\abs{\det W} - \abs{o(b^{-n})}}{K b} \norm\xi.
\end{equation}
In the construction of $\Dl_b$ we were free to choose $u'$ and $v'$ arbitrarily
close to~$1$.
This fact together with Proposition~\ref{distortion-invariance} and
\eqref{q}
show that $\lambda_0'$ and $\lambda_1'$ can be taken arbitrarily close to
$\alpha/c'$ and $\lambda_1' \to \alpha/(1-c')$, respectively.
Using Lemma~\ref{DR3D} we get
\begin{equation*}
    \det W = -\frac{2\alpha(2\alpha-1)(\alpha-1)c'(1-c')}%
        {(\alpha-c')^2 (\alpha-1+c')^2} + \eps,
\end{equation*}
for arbitrarily small $\eps$.
This is uniformly bounded away from~$0$ since $c' \in \Delta$ for all
$\bar f \in \bar\Dl_b$, and $\alpha>1$.
Hence \ref{3d-mfd-det} implies the first inequality of~\eqref{3d-mfd-norms}.

This concludes the proof of~\eqref{3d-mfd-norms}.
We will now show how these bounds give us an invariant expanding cone field and
a local unstable manifold.

Note that
$\norm{\hat x}/\norm x = t_1/\abs U + t_2/\abs V + t_3/\abs C$,
where $\sum\abs{t_i} = 1$.
We may assume $\abs U, \abs V < \abs C$ by choosing $b$ large enough, so
$\norm{\hat x}/\norm x \geq \abs C^{-1}$.
Using this and \eqref{3d-mfd-norms} it follows that
\eqref{pointwise-deriv-bound} is satisfied for
\begin{align*}
    \mu &= \inf_{\bar f\in\bar\Dl_b} \frac{1}{K b \abs C},
    &
    \nu &= K b^2,
    &
    \lambda &= o(b^{-n}),
    &
    1 - \tau &= o(b^{-n}).
\end{align*}
By Lemma~\ref{bounded-space}, $\mu \to \infty$ as $b \to \infty$.
From the above it follows that the conditions of Lemma \ref{cone-field}
are satisfied.
The angles of Lemmas \ref{cone-field} and~\ref{loc-unstable-mfd} satisfy
$\theta_0 \leq o(b^{-n})$ whereas $\theta_i \geq 1/(K b^2)$ for
$i=1,2,3$.
This shows that we may choose $\theta_b = 1 / b^3$ and get that the cone
$H_{\theta_b}$ is invariant and expanded.
Note the differences to Theorem~\ref{2d-mfd}: the angle of the invariant cone
cannot be chosen independently of~$b$ but instead we may choose it arbitrarily
small.
We also get a local unstable manifold $W_\text{loc}$ which is the
graph of a $\theta_b$--Lipschitz map $\gamma_\text{loc}: B \to E_b$
for some open neighborhood $B \subset Q\times \Delta_b$ of
$(u_b^\star, v_b^\star, c_b^\star)$,
where $\bar f_b^\star = (u_b^\star, v_b^\star, c_b^\star, \bar s_b^\star)$.

\smallskip
\noindent\textit{Step 2.}
We will now define the graph transform on graphs over $Q\times\Delta$.
The arguments use step~2 of the proof of Theorem~\ref{2d-mfd} but they are
somewhat different.

Let $G$ be the set of $\theta_b$--Lipschitz maps
$\gamma: Q\times\Delta \to E_b$ such that $\bar f_b^\star \in \graph\gamma$,
where $\theta_b$ is as in step~1.
We claim that $\xi|_W$ is a diffeomorphism to its image and that
$\xi(W) \supset Q\times\Delta$, where
$W = \bar\Dl_b \cap \graph\gamma$.
This implies that $\renorm(W)$ is a graph over $Q\times\Delta$ and
from the invariant cone field it follows that it is $\theta_b$--Lipschitz.
Hence the graph transform on $G$ is well-defined.
We will now prove the claim.

Every value of $\xi|_{W}$ is regular.
This follows from the cone invariance, which shows that every tangent plane of
the graph of $\gamma$ is in $H_{\theta_b}$ so its $(u,v,c)$--projection is
onto, and from Lemma~\ref{DR3D}, which shows that $\deriv_{u,v,c}\xi$ has
non-zero determinant.

By the above we can pull back $(u,v)\times(0,1)$ by $\xi|_{W}$ and
get a (nonempty) curve $\ell(u,v) \subset W$, $\forall (u,v)\in Q$.
This curve is a graph over $\Delta_b$ by step~2 of the proof of
Theorem~\ref{2d-mfd}.
To see this use that $(u,v) \mapsto (u,v,c_0,\gamma(u,v,c_0))$ is a
$\delta_b$--Lipschitz graph so its intersection with $W$ is a
diffeomorphic copy of $Q$ under $\renorm$ followed by a $(u,v)$--projection,
$\forall c_0\in\Delta_b$.

The $\xi$--image of $\ell(u,v)$ is in $(u,v)\times(0,1)$ and by
Lemma~\ref{Delta-b} it contains $(u,v)\times\Delta$.
Furthermore $\deriv_{u,v,c}\xi$ is of the form $(0,0,t)$ for tangent vectors of
$\ell(u,v)$ and $t\neq0$ since each value of $\xi|_{W}$is regular.
Thus $\xi$ is injective on $\ell(u,v)$.
This finishes the proof of the claim, since by the above $W$ is
foliated by $\{\ell(u,v)\}_{(u,v)\in Q}$.

\smallskip
\noindent\textit{Step 3.}
This is identical to step~3 in the proof of Theorem~\ref{2d-mfd}.

\smallskip
\noindent\textit{Step 4.}
Pick $c \in \Delta_b$ close to $c(\bar f_b^\star)$.
Let $\gamma_c: Q \to \{c\}\times E_b$ be the map whose graph contains all
points of $\Wu$ with critical point $c$, i.e.\
$\gamma_c(u,v) = (c,\gamma_b^\text{u}(u,v,c))$.
Let $(\bar\phi_{u,v},\bar\psi_{u,v}) = \gamma_b^\text{u}(u,v,c)$,
$\phi_{u,v} = \compose\bar\phi_{u,v}$,
$\psi_{u,v} = \compose\bar\psi_{u,v}$,
and define
\begin{equation*}
    F(u,v)
    =
    \left(
        \frac{\phi_{u,v}(u) - c}{1 - c}
        ,
        \frac{c - \psi_{u,v}(1 - v)}{c}
    \right).
\end{equation*}
$F$ takes each map in the graph of $\gamma_c$ to its critical
values, normalized to lie in~$[0,1]$.
Extend $F$ continuously to a rectangle set $\Lambda \supset Q$ such that
$F(\Lambda) = [0,1]^2$ and $F:\partial\Lambda \to \partial([0,1]^2)$ has
non-zero degree.
For example, set $\bar\phi_{u,v}(\tau)=0$ and $\bar\psi_{u,v}(\tau)=0$ outside
an $\eps$--neighborhood of~$Q$, $\forall\tau$, and interpolate linearly.
This extension simply means that we get a family where if one branch is
trivial or full then the critical value of the other branch runs through all
possible values as we change $(u,v)$.
From \cite{MdM01}*{Prop.~2.1} it follows that there exists
$(u,v)\in\Lambda$ such that $\bar f = (u,v,c,\bar\phi_{u,v},\bar\psi_{u,v})$ is
infinitely
$(a,b)$--renormalizable ($\bar f$ can be approximated by $\bar f_n$ whose
associated Lorenz maps $f_n$ have
finite critical orbits, e.g.\ take $\bar f_n$ such that $\renorm^n \bar f_n$ is
full).
Maps outside $\bar\Dl_b$ are at most once renormalizable (see step~4 of the proof
of Theorem~\ref{2d-mfd}), so $(u,v) \in Q$, and consequently $\bar f \in \Wu$.
\end{proof}

\appendix

\section{Collected results}

\begin{lemma} \label{power-law-distortion}
    Let $q$ be a power-law branch with critical point at~$0$.
    Given a triplet of points
    $0 < x < y < z < \infty$, we denote $L = [0,x]$, $M = [x,y]$,
    $R = [y,z]$, $LM = L \cup M$, $MR = M \cup R$, and $LR = L \cup M \cup R$.
    Then
    \begin{gather}
        \label{L-over-LM}
        \frac{\abs{q(L)}}{\abs{q(LM)}}
        = \left( \frac{\abs{L}}{\abs{LM}} \right)^\alpha\mkern-10mu,
        \\
        \label{M-over-LM}
        \frac{\abs{M}}{\abs{LM}} \leq \frac{\abs{q(M)}}{\abs{q(LM)}}
        \leq \alpha \frac{\abs{M}}{\abs{LM}},
        \\
        \label{M-over-MR-weak}
        \frac{\abs{q(M)}}{\abs{q(MR)}}
        \leq \frac{\abs{M}}{\abs{MR}}.
    \end{gather}
    Furthermore, for every $\tau > 0$ there exists $\rho \in (0,1)$ such that
    if $\abs{LR} \geq (1+\tau) \abs{LM}$ and $\abs{R} \geq \tau \abs{M}$, then
    \begin{equation}
        \label{M-over-MR}
        \frac{\abs{q(M)}}{\abs{q(MR)}} \leq \rho
        \frac{\abs{M}}{\abs{MR}}.
    \end{equation}
\end{lemma}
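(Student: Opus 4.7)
The plan is to reduce to the canonical case $q(x) = x^\alpha$ on $[0,\infty)$. Every power-law branch with critical point at~$0$ differs from this normal form by affine maps on domain and range, but the four statements to be proved are ratios of lengths on domain and range sides, so they are invariant under such affine changes. With this normalization in place, the first three statements become algebraic identities/inequalities for $x^\alpha$ and are routine.

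I will handle \eqref{L-over-LM} by direct substitution, which just becomes $x^\alpha / y^\alpha = (x/y)^\alpha$. For \eqref{M-over-LM}, I set $t = x/y \in (0,1)$ and reduce to $1 - t \leq 1 - t^\alpha \leq \alpha(1-t)$; the left bound follows from $t^\alpha \leq t$ (since $\alpha>1$, $t<1$), the right bound from $1 - t^\alpha = \int_t^1 \alpha u^{\alpha-1}\,du \leq \alpha(1-t)$. For the weak inequality \eqref{M-over-MR-weak}, I rewrite the claim as $(y^\alpha - x^\alpha)/(y-x) \leq (z^\alpha - x^\alpha)/(z-x)$, which is exactly the statement that the secant slope of the convex function $u \mapsto u^\alpha$ based at $x$ is non-decreasing in the right endpoint.

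The substance of the lemma is in \eqref{M-over-MR}, which quantifies the strict inequality of \eqref{M-over-MR-weak}. My plan is to write $s = z/y$ and $t = x/y$, so that the hypothesis $\abs{LR} \geq (1+\tau)\abs{LM}$ becomes $s \geq 1+\tau$ (the second hypothesis $\abs{R} \geq \tau\abs{M}$ is then automatic since $z - y \geq \tau y \geq \tau(y-x)$). The desired inequality becomes
\begin{equation*}
    F(s,t) := \frac{(1-t^\alpha)(s-t)}{(s^\alpha - t^\alpha)(1-t)} \leq \rho < 1,
    \qquad s \geq 1+\tau,\; t \in [0,1).
\end{equation*}
I will argue by compactness in two regimes. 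First, for $s$ in a bounded range $[1+\tau, M]$, I note that $F$ extends continuously to $t = 1$ with value $F(s,1) = \alpha(s-1)/(s^\alpha-1)$, which is strictly less than $1$ for every $s > 1$ by the strict form of Bernoulli's inequality $s^\alpha > 1 + \alpha(s-1)$ (valid for $\alpha>1$, $s>1$); thus $F < 1$ pointwise on the compact set $[1+\tau, M]\times[0,1]$ and a uniform bound $\rho_1 < 1$ follows. Second, for $s \geq M$ with $M$ large, the estimate $F(s,t) \leq \alpha \cdot (2s)/s^\alpha = 2\alpha s^{1-\alpha}$ (using the bound $(1-t^\alpha)/(1-t)\leq\alpha$ established in \eqref{M-over-LM} and $s^\alpha - t^\alpha \geq s^\alpha/2$ for $M$ large) gives $F \leq 1/2$, say. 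Taking $\rho = \max\{\rho_1, 1/2\}$ finishes the proof.

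The only non-mechanical step is verifying that $F$ stays strictly below $1$ as $t \to 1$, since both the numerator and denominator vanish there; this is where strict convexity of $u \mapsto u^\alpha$ enters through Bernoulli's strict inequality applied to the limiting value. Everything else is bookkeeping.
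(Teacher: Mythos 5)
Your proposal is correct. The paper itself gives no proof of Lemma~\ref{power-law-distortion} (it sits in the appendix of collected results without argument), so there is no ``paper approach'' to compare against; but your argument is self-contained and sound. The reduction to $q(x)=x^\alpha$ by affine invariance of length-ratios is valid; \eqref{L-over-LM}--\eqref{M-over-MR-weak} follow exactly as you indicate; and for \eqref{M-over-MR} your two-regime compactness argument works: on $[1+\tau,M]\times[0,1]$ the function
$F(s,t)=\dfrac{(1-t^\alpha)(s-t)}{(s^\alpha-t^\alpha)(1-t)}$
extends continuously to $t=1$ with $F(s,1)=\alpha(s-1)/(s^\alpha-1)<1$ by strict Bernoulli, is $<1$ for $t\in(0,1)$ by strict convexity of $u^\alpha$, and equals $s^{1-\alpha}<1$ at $t=0$, so compactness gives a uniform $\rho_1<1$; and for $s$ large the bound $F\leq 2\alpha s^{1-\alpha}$ kills the tail. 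Your observation that the hypothesis $\abs{R}\geq\tau\abs{M}$ is implied by $\abs{LR}\geq(1+\tau)\abs{LM}$ (since $z-y\geq\tau y\geq\tau(y-x)$) is also correct; the lemma's second hypothesis is redundant as stated.
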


\begin{lemma} \label{l1-expansion}
    Let $M$ be a non-singular $n\times n$ matrix, let $V$ be the volume of the
    $n$--simplex $\sigma$ with vertices $\{0,e_1,\dots,e_n\}$, and let $A$ be
    the surface area of the largest face of the convex hull of
    $\{\pm m_i\}_{i=1}^n$.
    Here $e_i$ are the standard basis vectors of $\reals^n$ and $m_i$ are the
    columns of $M$.
    Then
    \begin{equation*}
        \frac{\norm{Mx}_1}{\norm{x}_1} \geq V \frac{\abs{\det M}}{A},
        \qquad \forall x \in \reals^n\setminus\{0\},
    \end{equation*}
    where $\norm{\cdot}_p$ denotes the $\ell^p$--norm.
\end{lemma}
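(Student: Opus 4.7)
The plan is to read the inequality geometrically via the image of the $\ell^1$ unit ball. Let $B = \{x \in \reals^n : \norm{x}_1 \leq 1\} = \mathrm{conv}\{\pm e_1,\dotsc,\pm e_n\}$ be the standard cross-polytope and set $P = M(B) = \mathrm{conv}\{\pm m_1,\dotsc,\pm m_n\}$. Since $\norm{Mx}_1/\norm{x}_1$ is scale-invariant, it suffices to prove $\norm{Mx}_1 \geq V\abs{\det M}/A$ for every $x \in \partial B$, i.e.\ for every $y = Mx \in \partial P$.

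The facets of $B$ are the $2^n$ simplices $\mathrm{conv}\{\epsilon_1 e_1,\dotsc,\epsilon_n e_n\}$ indexed by $\epsilon \in \{\pm 1\}^n$. Since $M$ is a linear isomorphism it carries the face lattice of $B$ bijectively to that of $P$, so the facets of $P$ are precisely the $(n-1)$-simplices $F_\epsilon = \mathrm{conv}\{\epsilon_1 m_1,\dotsc,\epsilon_n m_n\}$. For each $\epsilon$, the $n$-simplex $T_\epsilon = \mathrm{conv}(\{0\}\cup F_\epsilon)$ has $n$-volume $\tfrac{1}{n!}\abs{\det[\epsilon_1 m_1,\dotsc,\epsilon_n m_n]} = V\abs{\det M}$.

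Now apply the standard ``base times height over $n$'' identity $\vol(T_\epsilon) = \tfrac{1}{n} h_\epsilon \cdot \mathrm{area}(F_\epsilon)$, where $h_\epsilon$ is the Euclidean distance from $0$ to the affine hull of $F_\epsilon$ and $\mathrm{area}(\cdot)$ denotes $(n-1)$-dimensional Euclidean volume. Rearranging and using $\mathrm{area}(F_\epsilon) \leq A$ gives $h_\epsilon = nV\abs{\det M}/\mathrm{area}(F_\epsilon) \geq nV\abs{\det M}/A$. Any $y \in F_\epsilon$ lies in this affine hyperplane at Euclidean distance $h_\epsilon$ from $0$, hence satisfies $\norm{y}_2 \geq h_\epsilon$, and combined with the pointwise inequality $\norm{y}_1 \geq \norm{y}_2$ this yields $\norm{y}_1 \geq nV\abs{\det M}/A$. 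Setting $y = Mx$ for $x \in \partial B$ proves the lemma (with an extra factor of $n$ to spare); the only ingredients are the face-lattice bijection under a linear isomorphism and the elementary simplex volume identity, so no substantial obstacle arises.
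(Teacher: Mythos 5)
Your proof is correct and follows essentially the same route as the paper's: both pass to the image $P = M(B)$ of the $\ell^1$ unit ball, take the $n$-simplex spanned by the origin and a facet of $P$, compare its volume against both $|\det M|\cdot V$ (change of variables) and height times facet area, and finish with $\norm{\cdot}_1 \geq \norm{\cdot}_2$. The one genuine difference is that you retain the $\tfrac1n$ in the base-times-height identity, so your bound is in fact $n\,V\abs{\det M}/A$, a factor of $n$ sharper than the stated inequality; the paper silently drops this factor in the estimate $\vol(\tilde M\sigma)\leq\norm{\xi}_2 A$.
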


\begin{proof}
    Note that $\norm{Mx}_1 / \norm{x}_1$ is the $\ell^1$--distance from the
    origin to a point on the boundary, $H$, of the convex hull of $\{\pm
    m_i\}_{i=1}^n$, for $x \neq 0$.
    Let $\xi \in H$ be a point at minimal $\ell^1$--distance to the origin.
    By changing signs of some columns of $M$ it is possible to obtain a
    matrix $\tilde M$ such that $\xi$ is in the image the simplex $\sigma$
    under $\tilde M$.
    By the change of variables formula
    $\vol(\tilde M \sigma) = \abs{\det M} V$, since
    $\abs{\det \tilde M} = \abs{\det M}$.
    At the same time
    $\vol(\tilde M \sigma) \leq \norm{\xi}_2 A$, since
    $\norm{\xi}_2 \geq \min\{\norm{x}_2 \mid x \in H \cap \tilde M\sigma\}$.
    Hence
    \begin{equation*}
        \frac{\norm{Mx}_1}{\norm{x}_1} \geq \norm{\xi}_1
        \geq \norm{\xi}_2
        \geq V \frac{\abs{\det M}}{A}. \qedhere
    \end{equation*}
\end{proof}

\begin{lemma}[\cite{M98}*{Lemma~10.3}] \label{nonlinearity-prop}
    If $g \in \diff^2$ (see \sref{preliminaries}), then
    \begin{equation*}
        e^{-\abs{y-x}\norm g} \leq \frac{\deriv g(y)}{\deriv g(x)}
            \leq e^{\abs{y-x}\norm g}, \qquad
        e^{-\norm g} \leq \deriv g(x) \leq e^{\norm g}.
    \end{equation*}
\end{lemma}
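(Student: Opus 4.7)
The plan is to exploit the definition of the nonlinearity operator directly: since $\nonlin g = \deriv \log \deriv g$, the function $\log \deriv g$ is an antiderivative of $\nonlin g$. So the first inequality will come from the fundamental theorem of calculus, and the second will come from combining this with the normalization $g:[0,1]\to[0,1]$.

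First I would write
\begin{equation*}
    \log \deriv g(y) - \log \deriv g(x)
    = \int_x^y \nonlin g(t)\, dt,
\end{equation*}
which makes sense because $g \in \diff^2$ means $\deriv g > 0$ is $\Ck1$ so $\log \deriv g$ is differentiable with derivative $\nonlin g$. Taking absolute values and using $\abs{\nonlin g(t)} \leq \sup \abs{\nonlin g} = \norm g$ gives
\begin{equation*}
    \abs*{\log \tfrac{\deriv g(y)}{\deriv g(x)}} \leq \abs{y-x} \norm g,
\end{equation*}
and exponentiating yields the first pair of inequalities.

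For the second pair, I would use that $g$ maps $[0,1]$ onto itself so $\int_0^1 \deriv g(t)\, dt = 1$; by the mean value theorem there exists $z \in [0,1]$ with $\deriv g(z) = 1$. Applying the first inequality with this $z$ in place of $x$ (and using $\abs{y-z}\leq 1$) gives $e^{-\norm g} \leq \deriv g(y) \leq e^{\norm g}$ for all $y \in [0,1]$. No step here is really an obstacle; the only mild subtlety is observing that the existence of a point where $\deriv g$ equals $1$ is what makes $\norm g$ (rather than some larger quantity) the correct exponent.
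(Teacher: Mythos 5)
Your proof is correct, and it is the standard argument: the first pair of inequalities follows immediately from writing $\log\deriv g(y) - \log\deriv g(x) = \int_x^y \nonlin g$ and bounding the integrand by $\norm g = \sup\abs{\nonlin g}$; the second pair follows by finding a point $z$ with $\deriv g(z) = 1$ (which exists because $g$ fixes $0$ and $1$, so $\int_0^1 \deriv g = 1$, and $\deriv g$ is continuous) and applying the first pair with $x = z$, $\abs{y-z}\leq 1$. The paper does not prove this lemma itself but cites it from \cite{M98}*{Lemma~10.3}, so there is no in-paper argument to compare against; your reasoning is the expected one and there is no gap.
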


\begin{lemma}[Koebe Lemma \cite{J96}*{Lem.~2.4}] \label{koebe-lemma}
    If $\inv g \in \sdiff$ (see \sref{preliminaries}), then
    \begin{equation*}
        \abs*{\nonlin g(x)} \leq
        2 \min\{\abs{x},\abs{1-x}\}^{-1}.
    \end{equation*}
\end{lemma}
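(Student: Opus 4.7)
The plan is to translate the hypothesis via the Schwarzian composition formula, reduce the estimate to a concavity statement about $q = 1/\sqrt{\deriv g}$, and conclude with the tangent-line inequality for concave functions.

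First I would observe that $\inv g \in \sdiff$, i.e.\ $\schwarz(\inv g) \leq 0$, is equivalent to $\schwarz g \geq 0$. This is a direct consequence of the Schwarzian composition rule $\schwarz(f\circ h) = (\schwarz f)\circ h \cdot (\deriv h)^2 + \schwarz h$ applied to $g\circ \inv g = \id$: since $\schwarz(\id) = 0$, one gets $\schwarz(\inv g) = -(\schwarz g \circ \inv g)\cdot(\deriv\inv g)^2$, and the positive factor $(\deriv\inv g)^2$ forces the two Schwarzians to have opposite signs.

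Next, set $q(x) = 1/\sqrt{\deriv g(x)}$; this is positive and $\Ck2$ on $[0,1]$ since $\deriv g > 0$. A direct calculation gives $q''(x) = -\schwarz g(x)/(2\sqrt{\deriv g(x)})$, so $\schwarz g \geq 0$ makes $q$ concave on $[0,1]$. Differentiating the identity $\log \deriv g = -2\log q$ yields $\nonlin g = -2 q'/q$, and so the desired estimate reduces to showing $\abs{q'(x)}/q(x) \leq 1/\min\{x,1-x\}$.

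For the last step I would use the tangent-line characterization of concavity: $q(a) \leq q(x) + q'(x)(a-x)$ for all $a \in [0,1]$. Setting $a = 0$ gives $x q'(x) \leq q(x) - q(0) \leq q(x)$; setting $a = 1$ gives $-(1-x)q'(x) \leq q(x) - q(1) \leq q(x)$ (using $q(0),q(1) \geq 0$). Whichever sign $q'(x)$ has, one of these yields either $\abs{q'(x)}/q(x) \leq 1/x$ or $\abs{q'(x)}/q(x) \leq 1/(1-x)$; in both cases $\abs{q'(x)}/q(x) \leq \max\{1/x, 1/(1-x)\} = 1/\min\{x,1-x\}$, so $\abs{\nonlin g(x)} = 2\abs{q'(x)}/q(x) \leq 2/\min\{x,1-x\}$. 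There is no genuine obstacle here: every step is a short identity or a one-line convex-geometry fact. The only slightly delicate bookkeeping is the sign swap of the Schwarzian under inversion, but this is standard.
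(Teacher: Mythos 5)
Your argument is correct, and it is the standard proof of the Koebe distortion principle: the substitution $q=(\deriv g)^{-1/2}$ turns $\schwarz g\geq 0$ (equivalent to $\inv g\in\sdiff$ via the Schwarzian cocycle relation) into concavity of $q$, the identity $\nonlin g=-2q'/q$ reduces the claim to a gradient bound, and the tangent-line inequality at $a=0$ and $a=1$ gives exactly the stated estimate with constant $2$. All the computational steps check out, including the sign swap under inversion and the formula $q''=-\tfrac12 q\,\schwarz g$. The paper itself does not prove this lemma --- it is quoted verbatim from \cite{J96}*{Lem.~2.4} --- so there is no in-paper argument to compare to; your derivation is the classical one and fills that gap cleanly.
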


\begin{bibdiv}
\begin{biblist}

\bib{ACT81}{article}{
    author={Arneodo, A.},
    author={Coullet, P.},
    author={Tresser, C.},
    title={A possible new mechanism for the onset of turbulence},
    journal={Phys. Lett.},
    volume={81A},
    number={4},
    date={1981},
    pages={197--201},
}

\bib{AL11}{article}{
   author={Avila, A.},
   author={Lyubich, M.},
   title={The full renormalization horseshoe for unimodal maps of higher
   degree: exponential contraction along hybrid classes},
   journal={Publ. Math. Inst. Hautes \'Etudes Sci.},
   number={114},
   date={2011},
   pages={171--223},
   issn={0073-8301},
}

\bib{CLM05}{article}{
   author={De Carvalho, A.},
   author={Lyubich, M.},
   author={Martens, M.},
   title={Renormalization in the H\'enon family. I. Universality but
   non-rigidity},
   journal={J. Stat. Phys.},
   volume={121},
   date={2005},
   number={5-6},
   pages={611--669},
}

\bib{CT78}{article}{
   author={Coullet, P.},
   author={Tresser, C.},
   title={It\'erations d'endomorphismes et groupe de renormalisation},
   journal={C. R. Acad. Sci. Paris S\'er. A-B},
   volume={287},
   date={1978},
   number={7},
   pages={A577--A580},
}

\bib{F03}{book}{
   author={Falconer, K.},
   title={Fractal geometry},
   edition={2},
   publisher={John Wiley \& Sons, Inc., Hoboken, NJ},
   date={2003},
   pages={xxviii+337},
}

\bib{FMP06}{article}{
   author={de Faria, E.},
   author={de Melo, W.},
   author={Pinto, A.},
   title={Global hyperbolicity of renormalization for $C^r$ unimodal mappings},
   journal={Ann. of Math. (2)},
   volume={164},
   date={2006},
   number={3},
   pages={731--824},
}

\bib{F78}{article}{
   author={Feigenbaum, M. J.},
   title={Quantitative universality for a class of nonlinear transformations},
   journal={J. Statist. Phys.},
   volume={19},
   date={1978},
   number={1},
   pages={25--52},
}

\bib{GM06}{article}{
   author={Gambaudo, J.-M.},
   author={Martens, M.},
   title={Algebraic topology for minimal Cantor sets},
   journal={Ann. Henri Poincar\'e},
   volume={7},
   date={2006},
   number={3},
   pages={423--446},
}

\bib{GW79}{article}{
   author={Guckenheimer, J.},
   author={Williams, R. F.},
   title={Structural stability of Lorenz attractors},
   journal={Inst. Hautes \'Etudes Sci. Publ. Math.},
   number={50},
   date={1979},
   pages={59--72},
}

\bib{H75}{article}{
    author={Ho, C.},
    title={A note on proper maps},
    journal={Proc.\ AMS},
    volume={51},
    number={1},
    date={1975},
    pages={237--241},
}

\bib{J96}{book}{
   author={Jiang, Y.},
   title={Renormalization and geometry in one-dimensional and complex
   dynamics},
   volume={10},
   publisher={World Scientific Publishing Co., Inc.},
   place={River Edge, NJ},
   date={1996},
}

\bib{KH95}{book}{
   author={Katok, A.},
   author={Hasselblatt, B.},
   title={Introduction to the modern theory of dynamical systems},
   volume={54},
   publisher={Cambridge University Press},
   place={Cambridge},
   date={1995},
   pages={xviii+802},
   isbn={0-521-34187-6},
}

\bib{LM80}{article}{
    author={Libchaber, A.},
    author={Maurer, J.},
    title={Une Experience de Rayleigh-B\'enard en g\'eometrie r\'eduite:
        multiplication, accrochage et d\'emultiplication des fr\'equences},
    journal={J. Phys.  Coll.},
    volume={41},
    pages={C3 51--56},
    date={1980},
}

\bib{L81}{article}{
    author={Linsay, P. S.},
    title={Period Doubling and Chaotic Behavior in a Driven Anharmonic
    Oscillator},
    journal={Phys. Rev. Lett.},
    volume={47},
    number={19},
    date={1981},
    pages={1349--1352},
}

\bib{M98}{article}{
   author={Martens, M.},
   title={The periodic points of renormalization},
   journal={Ann. of Math. (2)},
   volume={147},
   date={1998},
   number={3},
   pages={543--584},
}

\bib{MdM01}{article}{
    author={Martens, M.},
    author={de Melo, W.},
    title={Universal models for Lorenz maps},
    journal={Erg. Th. Dynam. Sys.},
    year={2001},
    volume={21},
    number={3},
    pages={833--860},
}

\bib{MPW17}{article}{
    author={Martens, M.},
    author={Palmisano, L.},
    author={Winckler, B.},
    title={The Rigidity Conjecture},
    eprint={arXiv:1612.08939},
    year={2017},
    status={To appear in Indag. Math.},
}

\bib{MW14}{article}{
   author={Martens, M.},
   author={Winckler, B.},
   title={On the hyperbolicity of Lorenz renormalization},
   journal={Comm. Math. Phys.},
   volume={325},
   date={2014},
   number={1},
   pages={185--257},
}

\bib{MW16}{article}{
    author={Martens, M.},
    author={Winckler, B.},
    title={Physical measures for infinitely renormalizable Lorenz maps},
    doi={10.1017/etds.2016.43},
    journal={Ergod. Th. \& Dynam. Sys.},
    date={Sept. 19 2016},
    pages={1--22},
}

\bib{dMvS93}{book}{
   author={de Melo, W.},
   author={van Strien, S.},
   title={One-dimensional dynamics},
   volume={25},
   publisher={Springer-Verlag},
   place={Berlin},
   date={1993},
}

\bib{V00}{article}{
   author={Viana, M.},
   title={What's new on Lorenz strange attractors?},
   journal={Math. Intelligencer},
   volume={22},
   date={2000},
   number={3},
   pages={6--19},
}

\bib{W10}{article}{
   author={Winckler, B.},
   title={A renormalization fixed point for Lorenz maps},
   journal={Nonlinearity},
   volume={23},
   date={2010},
   number={6},
   pages={1291--1302},
}

\bib{Y02}{article}{
   author={Yampolsky, M.},
   title={Hyperbolicity of renormalization of critical circle maps},
   journal={Publ. Math. Inst. Hautes \'Etudes Sci.},
   number={96},
   date={2002},
   pages={1--41 (2003)},
}

\end{biblist}
\end{bibdiv}

\end{document}